\newtheorem{theorem}{Theorem}
\newtheorem{lemma}[theorem]{Lemma}
\newtheorem{proposition}[theorem]{Proposition}
\newtheorem{corollary}[theorem]{Corollary}
\theoremstyle{definition}
\newtheorem{definition}[theorem]{Definition}
\newtheorem{example}[theorem]{Example}
\theoremstyle{remark}
\newtheorem{remark}[theorem]{Remark}
\numberwithin{equation}{section}
\numberwithin{theorem}{section}
\newcommand\thref{Theorem \ref}
\newcommand\leref{Lemma \ref}
\newcommand\prref{Proposition \ref}
\newcommand\coref{Corollary \ref}
\newcommand\deref{Definition \ref}
\newcommand\exref{Example \ref}
\newcommand\reref{Remark \ref}
\renewcommand{\comment}[1]{}
\newcommand{\beq}{\begin{equation}}
\newcommand{\eeq}{\end{equation}}
\newcommand{\beqa}{\begin{eqnarray}}
\newcommand{\eeqa}{\end{eqnarray}}
\newcommand{\beaa}{\begin{eqnarray*}}
\newcommand{\ben}{\begin{eqnarray*}}
\newcommand{\eaa}{\end{eqnarray*}}
\newcommand{\een}{\end{eqnarray*}}
\newcommand{\leftexp}[2]{{\vphantom{#2}}^{#1}{#2}}
\def\CC{\mathcal{C}}
\def\hS{\hat{S}}
\def\cR{\mathcal{R}}
\def\hcR{\hat{\cR}}
\def\W{\mathcal{W}}
\def\A{\mathcal{A}}
\def\B{\mathcal{B}}
\def\C{\mathbb{C}}
\def\CP{\mathbb{CP}}
\def\D{\mathcal{D}}
\def\F{\mathcal{F}}
\def\H{\mathcal{H}}
\def\M{\mathcal{M}}
\def\N{\mathbb{N}}
\def\O{\mathcal{O}}
\def\R{\mathbb{R}}
\def\S{\mathcal{S}}
\def\T{\mathcal{T}}
\def\Z{\mathbb{Z}}
\def\f{\mathbf{f}}
\def\q{\mathbf{q}}
\def\t{\mathbf{t}}
\DeclareMathOperator\Aut{Aut}
\DeclareMathOperator\End{End}
\DeclareMathOperator\Hom{Hom}
\DeclareMathOperator\tr{tr}
\DeclareMathOperator\Ker{Ker}
\DeclareMathOperator\Res{Res}
\DeclareMathOperator\Ind{Ind}
\def\IFF{\Leftrightarrow}
\def\TO{\Rightarrow}
\def\one{{\boldsymbol{1}}}  
\def\vac{{\boldsymbol{1}}}  
\def\pt{\mathrm{pt}}
\def\ii{\sqrt{-1}} 
\def\al{\alpha}
\def\be{\beta}
\def\de{\delta}
\def\io{\iota}
\def\ep{\varepsilon}
\def\la{\lambda}
\def\si{\sigma}
\def\Si{\Sigma}
\def\Om{\Omega}
\def\om{\omega}
\def\ze{\zeta}
\def\tensor{\otimes}
\def\d{\partial}
\def\({\left(}
\def\){\right)}
\def\[{\left[}
\def\]{\right]}
\def\<{\left\langle}
\def\>{\right\rangle}
\def\lieg{{\mathfrak{g}}}
\def\lieh{{\mathfrak{h}}}
\def\liesl{{\mathfrak{sl}}}
\def\gl{\lambda}
\begin{document}

\title[$\W$-Constraints for Simple Singularities]{$\W$-Constraints for the Total Descendant Potential of a Simple Singularity}
\author{Bojko Bakalov}
\address{Department of Mathematics\\
North Carolina State University\\
Raleigh, NC 27695, USA}
\email{bojko\_bakalov@ncsu.edu}

\author{Todor Milanov}
\address{Kavli IPMU\\
University of Tokyo (WPI)\\
Japan}
\email{todor.milanov@ipmu.jp}


\date{March 15, 2012}

\subjclass[2010]{Primary 17B69; Secondary 32S30, 53D45, 81R10}

\begin{abstract}
Simple, or Kleinian, singularities are classified by Dyn\-kin diagrams of
type $ADE$. Let $\lieg$ be the corresponding finite-dimensional Lie algebra, and $W$ its Weyl group.
The set of $\lieg$-invariants in the basic representation of the affine Kac--Moody algebra
$\hat\lieg$ is known as a $\W$-algebra and is a subalgebra of the 
Heisenberg vertex algebra $\F$. Using period integrals, we construct an analytic
continuation of the twisted representation of $\F$. Our construction yields a global object, which may be called
a $W$-twisted representation of $\F$.
Our main result is that the total descendant potential of the singularity, 
introduced by Givental, is a highest weight vector for the $\W$-algebra.
\end{abstract}

\maketitle


\section{Introduction}\label{s1}

\subsection{Motivation from Gromov--Witten theory}\label{sgw}

Recall that the \emph{Gromov}--\emph{Witten} (GW) \emph{invariants} of a projective manifold $X$ consist of correlators 
\begin{equation}\label{correlator}
\langle \tau_{k_1}(v_{1}),\dots,\tau_{k_n}(v_{n})\rangle_{g,n,d}
\end{equation}
where $v_1,\dots,v_n\in H^*(X;\mathbb{C})$ are cohomology classes and
the enumerative meaning of the correlator is the following. Let
$C_1,\dots,C_n$ be $n$ cycles in $X$ in a sufficiently generic
position that are Poincar\'e dual to $v_1,\dots,v_n$,
respectively. Then the GW invariant \eqref{correlator} counts the
number of genus-$g$, degree-$d$ holomorphic curves in $X$ that are
tangent (in an appropriate sense) to the cycles $C_i$ with multiplicities $k_i$. For the precise definition we refer to \cite{W1, Ko2, Be, LT}.  
After A.\ Givental \cite{G2}, we organize the GW invariants in a generating series $\D_X$ called the {\em total descendant potential} of $X$ and defined as follows. Choose a basis $\{v_i\}_{i=1}^N$ of the vector (super)space $H=H^*(X;\mathbb{C})$ and let $t_k = \sum_{i=1}^N t_k^i v_i \in H$. Then
\begin{equation*}
\D_X (\t)= \exp \Bigl(\sum_{g,n,d} \frac{Q^d}{n!}\ \hbar^{g-1}
\sum_{k_1,\dots,k_n \geq 0}
\langle \tau_{k_1}(t_{k_1}),\dots,\tau_{k_n}(t_{k_n}) \rangle_{g,n,d} \Bigr),
\end{equation*}
where $\t=(t_0,t_1,\dots)=(t_k^i)$ and the definition of the correlator is extended multi-linearly in its arguments.
The function $\D_X$ is interpreted as a formal power series in the variables $t_k^i$ with coefficients formal Laurent series in $\hbar$ whose coefficients are elements of the Novikov ring $\mathbb{C}[Q]$. 

When $X$ is a point and hence $d=0$, the potential $\D_{\pt}$ (also known as the partition function of pure gravity) is a generating function for certain intersection numbers on the Deligne--Mumford moduli space of Riemann surfaces $\overline{\M}_{g,n}$. It was conjectured by Witten \cite{W1} and proved by Kontsevich \cite{Ko1} that $\D_{\pt}$ is a tau-function for the \emph{KdV hierarchy} of soliton equations. (We refer to \cite{Di,vanM} for excellent introductions to soliton equations.) In addition, $\D_{\pt}$ satisfies one more constraint called the \emph{string equation}, 
which together with the KdV hierarchy determines uniquely $\D_{\pt}$ (see \cite{W1}). It was observed in \cite{DVV,FKN,KS} that the tau-function of KdV satisfying the string equation is characterized as the unique solution of $L_n\D_\pt=0$ for $n\geq -1$, where $L_n$ are certain differential operators representing the \emph{Virasoro algebra}. This means that $\D_{\pt}$ is a highest-weight vector for the Virasoro algebra and in addition satisfies the string equation $L_{-1}\D_\pt=0$.

One of the fundamental open questions in Gromov--Witten theory is the Virasoro
conjecture suggested by S.\ Katz and the physicists Eguchi, Hori,
Xiong, and Jinzenji (see \cite{EHX, EJX, DZ3}), which says that $\D_X$
satisfies Virasoro constraints similar to the constraints for $\D_{\pt}$ for a certain representation of the Virasoro algebra. The equation $L_n\D_X=0$ has a simple combinatorial meaning: it gives a rule for simplifying the correlators \eqref{correlator} when $v_1=1$ and $k_1=n+1$.

A natural question is whether the results for $X=\pt$ can be generalized for any projective manifold $X$. In particular, is there an integrable hierarchy that together with the Virasoro constraints will uniquely characterize the GW invariants of $X$? Alternatively, are there other combinatorial rules that will allow us to simplify the correlator \eqref{correlator} for any cohomology class $v_1$, not only for $v_1=1$ (cf.\ \cite{DVV,Goe})? A representation-theoretic interpretation of such rules is that $\D_X$ is a highest-weight vector for an algebra containing the Virasoro algebra. 
Answering these questions in general is a very difficult problem. However, there is a class of manifolds for which the problem can be formulated entirely in the language of differential equations and representation theory.

\subsection{Semi-simple Frobenius manifolds and Givental's formula}\label{sfrobgiv}

Let us denote by $(\cdot,\cdot)$ the Poincar\'e pairing on $H=H^*(X;\mathbb{C})$. For simplicity of the exposition, we will assume that all non-zero cohomology classes are of even degree only; otherwise one has to view $H$ as a vector superspace (see \cite{KM}). The genus-$0$ GW invariants of $X$ allow one to endow $H$ with a commutative associative product $\bullet_t$ parameterized by $t\in H$, known as the \emph{quantum cup product} \cite{KM, RT}.

Assuming the basis $\{v_i\}_{i=1}^N$ of $H$ is homogeneous, we also introduce the {\em Euler vector field} on $H$:
\begin{equation*}
E=\sum_{i=1}^N (1-d_i)t^i \frac{\partial}{\partial t^i} + \sum_{i=1}^N \rho^i \frac{\partial}{\partial t^i} \,,
\end{equation*}
where $d_i = (1/2) \deg v_i$, and $\rho^i$ and $t^i$ are the coordinates respectively of $c_1(TX)$ and $t$ relative to the basis $\{v_i\}_{i=1}^N$. The Poincar\'e pairing and the quantum multiplication $\bullet_t$ are homogeneous of degrees respectively $2-D$ and $1$ with respect to $E$, where $D={\rm dim}_\mathbb{C}\ X$. 

One of the key facts in GW theory is that the following system of differential equations is compatible:
\begin{align}
\label{frob_eq1}
z \d_{t^i}  J(t,z) & =  v_i\bullet_t J(t,z)\,, \qquad\quad 1\leq i\leq N \,, \\
\label{frob_eq2}
(z\d_z+E) J(t,z) &= \theta J(t,z) \,,
\end{align}
where $\theta$ is the {\em Hodge grading operator} defined by $\theta(v_i)=(D/2 - d_i) v_i$. 

The quantum multiplication is called {\em semi-simple} if there are local coordinates $u^i$ on $H$, known as {\em canonical coordinates}, in which both the Poincar\'e pairing and the multiplication assume a diagonal form:
\begin{equation*}
\d/\d u^i \, \bullet_t\, \d/\d u^j = \delta_{ij}\d/\d u^j,\qquad 
(\d/\d u^i,\d/\d u^j ) = \delta_{ij}/\Delta_j
\end{equation*}
for some non-zero functions $\Delta_j$. Examples of manifolds with
semi-simple quantum cohomology include Grassmanians and Fano toric
manifolds. It was conjectured by Givental \cite{G1} and proved by
Teleman \cite{T} that if the quantum multiplication is semi-simple, then $\D_X$ is given by a formula of the following type:
\begin{equation}\label{giv_formula}
\D_X(\t) =  \widehat{G}_t \
\prod_{i=1}^N \D_{\pt}(\t^i), 
\end{equation}
where the variables $\t^i$ are the coordinates of $\t$ with respect to the basis $\sqrt{\Delta_i} \, \d/\d u^i$ and $\widehat{G}_t$ is a certain differential operator defined only in terms of the canonical coordinates and certain solutions of the differential equations \eqref{frob_eq1} and \eqref{frob_eq2} (see Sect.\ \ref{sec4} below). Givental's formula \eqref{giv_formula} implies that $\D_X$ can be reconstructed only from genus-0 GW invariants and the higher-genus theory of the point.

Motivated by GW theory, Dubrovin introduced the notion of a {\em Frobenius manifold} (see \cite{Du, Ma}). Locally, this is defined as follows. Let $H$ be a vector space whose tangent spaces $T_tH$ are Frobenius algebras with identity  $1$, i.e., there exist a non-degenerate bilinear pairing $(\cdot,\cdot)_t$ and a commutative associative multiplication $\bullet_t$ such that $(v\bullet_t w_1, w_2)_t= (w_1,v\bullet_t w_2)_t$. Assume also that the pairing is flat and homogeneous (of degree $2-D$) with respect to an Euler vector field $E$. We say that the Frobenius algebras form a {\em Frobenius structure} of conformal dimension $D$ if the system of equations \eqref{frob_eq1}, \eqref{frob_eq2} is compatible. The notion of semi-simplicity still makes sense in such an abstract setting. Therefore, following Givental \cite{G1}, we use formula \eqref{giv_formula} to define the {\em total descendant potential} of the semi-simple Frobenius manifold. 

By the results of Givental \cite{G2}, the Virasoro conjecture holds in
the semi-simple case. 
The construction of integrable hierarchies in the setting of
semi-simple Frobenius manifolds was investigated in \cite{Du, DZ2, DZ5,
  Ge2} using the bi-Hamiltonian formalism. The methods of Dubrovin
and Zhang are quite remarkable. They have recently
confirmed that such an integrable hierarchy exists, provided that a
certain conjecture about polynomiality of the Poisson brackets
holds \cite{DZ5}. This conjecture was partially proved by
Buryak--Posthuma--Shadrin \cite{BPS,BPS2} (the polynomiality of the second
bracket is still an open problem). Another approach is to derive
Hirota's bilinear equations for the tau-function; see \cite{OP, M1, M2, MT1, MT2, G3, GM, FGM}.

\subsection{Spin curves and the generalized Witten conjecture}

%

Recall from \cite{W2,JKV} that the moduli space of \emph{$h$-spin curves} consists 
of Riemann surfaces $C$ equipped with marked points and a line bundle  $L$ together 
with an isomorphism between $L^{\tensor h}$ and the canonical bundle $K_C(D)$, where 
$D$ is a divisor supported at the marked points. Different choices of $D$ parameterize connected components of the moduli space. Witten conjectured \cite{W2} (see also \cite{JKV}) that the total descendant potential for $h$-spin curves  is a $\tau$-function for the $h$-th Gelfand--Dickey hierarchy. 
This function is uniquely characterized as the solution that also satisfies the string equation. 

Witten's conjecture can be formulated also in the language of vertex algebras. Let  $\W_h$ be the Zamolodchikov--Fateev--Lukyanov \emph{$\W$-algebra} (see Sect.\ \ref{sec_2.5} below).
 According to Adler and van Moerbeke \cite{AV} there is a unique $\tau$-function for the $h$-th Gelfand--Dickey hierarchy solving the string equation. This unique solution is characterized as a highest weight vector for the vertex algebra $\W_h$ (see also \cite{G3,BM}).

On the other hand, the space of miniversal deformations of an \emph{$A_{h-1}$-singularity} can be equipped with a semi-simple Frobenius structure (see \cite{ST} and Sect.\ \ref{sec3_1} below). Givental proved that the corresponding total descendant potential \eqref{giv_formula} is a solution of the $h$-th Gelfand--Dickey hierarchy satisfying the string equation \cite{G3}. Therefore, the proof of Witten's conjecture was reduced to verifying that the total descendant potential of $h$-spin invariants coincides with Givental's function. This was done first by Faber--Shadrin--Zvonkine \cite{FSZ} (now there is a more general approach due to Teleman \cite{T}).   

Following a suggestion by Witten, Fan--Jarvis--Ruan \cite{FJR} generalized the notion of $h$-spin invariants. They introduced the moduli space of Riemann surfaces equipped with orbifold  line bundles satisfying certain algebraic relations, corresponding to a certain class of weighted-homogeneous polynomials. In particular, choosing $f(x)=x^h$ reproduces the $h$-spin invariants. If the polynomial has an isolated critical point of type $X_N = A_N, D_N, E_6, E_7$ or $E_8$ (these are the so-called {\em simple singularities}; see Sect.\ \ref{ssing} below) the total descendant potential of FJRW-invariants  coincides with the total descendant potential of the corresponding singularity. 

It was proved by
Frenkel--Givental--Milanov \cite{GM, FGM} that the total descendant potential $\D_{X_N}$ of a simple singularity is a  $\tau$-function for the \emph{Kac--Wakimoto hierarchy} of type $X_N$ in the principal realization (see \cite{KW}). In the present paper, we will show that $\D_{X_N}$ satisfies suitable \emph{$\W$-constraints}.

\subsection{Main result}
The Virasoro algebra is a Lie algebra, but the $\W$-algebras are not because they involve nonlinearities. Instead, they are \emph{vertex algebras} 
(see \cite{Bo, FLM, K2, FB, LL} and Sect.\ \ref{s4} below).
Informally, a vertex algebra is a vector space $V$ endowed with products $a_{(n)}b \in V$ for all $a,b\in V$ and $n\in\Z$.
An important example is the \emph{Heisenberg vertex algebra} (or Fock space) $\F$ associated to any vector space $\lieh$
equipped with a symmetric bilinear form. 
We let $\lieh$ be the Cartan subalgebra of a finite-dimensional simple Lie algebra $\lieg$ of type $X_N$ $(X=A,D,E)$, and denote by $R$ the root system.

Following \cite{FF1, FF2, FKRW}, we introduce the \emph{$\W$-algebra}  $\W_{X_N}$ 
as the subalgebra of $\F$ given by the intersection of the kernels of the so-called \emph{screening operators}
${e^{\al}}_{(0)}$ $(\al\in R)$.
Equivalently, $\W_{X_N}$ is the space of $\lieg$-invariants in the basic representation 
of the affine Kac--Moody algebra $\hat\lieg$, first considered by I.~Frenkel \cite{F}. 
In particular, $\W_{X_N}$ contains certain Casimirs, the first of which corresponds to the Virasoro algebra.
It is also important that $\W_{X_N}$ is invariant under the action of the Weyl group $W$. 

Let $\si\in W$ be a Coxeter element. Then the principal realization of the basic representation of $\hat\lieg$
admits the structure of a 
\emph{$\si$-twisted representation} of $\F$ (see \cite{LW, KKLW, K1} and Sect.\ \ref{twlat} below). 
We show that the total descendant potential $\D_{X_N}$ of a simple singularity of type $X_N$
lies in a certain completion of this representation. When restricted to $\W_{X_N}$, this representation becomes untwisted, and it gives rise to products 
$a_{(n)} \D_{X_N}$ for every $a\in\W_{X_N}$ and $n\in\Z$. Our main result is the following theorem.

\begin{theorem}\label{t1}
The total descendant potential\/ $\D_{X_N}$ $(X=A,D,E)$ of a simple
singularity satisfies the\/ $\W_{X_N}$-constraints\/ $a_{(n)} \D_{X_N} = 0$
for all\/ $a\in\W_{X_N}$, $n\ge0$.
\end{theorem}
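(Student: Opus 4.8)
The plan is to reduce the $\W_{X_N}$-constraints to the already-known integrable-hierarchy description of $\D_{X_N}$, namely that $\D_{X_N}$ is a tau-function for the Kac--Wakimoto hierarchy of type $X_N$ in the principal realization \cite{GM, FGM}, together with the string equation coming from the dilaton/string shift in Givental's quantization. The Kac--Wakimoto hierarchy is precisely the statement that the bilinear form $\sum_\al ({e^\al}\otimes e^{-\al}) (\D_{X_N}\otimes\D_{X_N})$ lies in the ``diagonal'' part, which is a vertex-algebraic way of saying that the screening operators ${e^\al}_{(n)}$, $n\ge 0$, together with their Weyl-group conjugates, annihilate $\D_{X_N}$. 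Since $\W_{X_N}$ is by definition $\bigcap_{\al\in R}\Ker {e^\al}_{(0)}$ inside $\F$, and is $W$-invariant, any $a\in\W_{X_N}$ is built from the Heisenberg fields in a way that commutes past the screening charges; the idea is that $a_{(n)}\D_{X_N}$ can be expressed through iterated normally-ordered products of the ${e^\al}_{(m)}$ acting on $\D_{X_N}$, all of which vanish for the relevant range of indices.

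Concretely, the key steps are as follows. First, I would set up the ``$W$-twisted representation'' of $\F$ carefully: using the period integrals of the simple singularity (the vanishing-cycle periods, which are multivalued and acquire monodromy by a Coxeter element $\si$), construct the analytic continuation of the $\si$-twisted Fock representation and check that $\D_{X_N}$, viewed via Givental's quantization, lies in the appropriate completion. This is the content signalled in the abstract and in Sect.~\ref{twlat}; it globalizes the local twisted vertex operators into single-valued objects on the complement of the discriminant. Second, I would translate the Frenkel--Givental--Milanov result into vertex-algebra language: the Hirota bilinear equations of the Kac--Wakimoto hierarchy are equivalent to ${e^\al}_{(n)}\D_{X_N}=0$ for all $\al\in R$ and all $n\ge 0$ (after the twist, the nonnegative modes of the screening currents are the ``annihilation'' part), plus the string equation which handles the boundary mode. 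Third, I would show that for $a\in\W_{X_N}$ the product $a_{(n)}\D_{X_N}$ is a (possibly infinite but well-defined in the completion) linear combination of expressions obtained by applying products of screening modes ${e^{\al_1}}_{(m_1)}\cdots{e^{\al_k}}_{(m_k)}$ with all $m_j\ge 0$ to $\D_{X_N}$; this uses the description of $\W_{X_N}$ as $\lieg$-invariants in the basic representation and the commutation relations between the Heisenberg modes defining $a$ and the screening currents. Finally, conclude $a_{(n)}\D_{X_N}=0$.

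The main obstacle, I expect, is Step 3 together with the nonlinearity of the $\W$-algebra: unlike the Virasoro case, $a$ is not a single Heisenberg mode but a normally-ordered polynomial in them, so $a_{(n)}\D_{X_N}$ does not obviously reduce to screening operators acting on $\D_{X_N}$. One has to exploit the characterization $\W_{X_N}=\bigcap_\al\Ker {e^\al}_{(0)}$ \emph{at the level of the vertex algebra} and use the fact that the twisted vertex operators $Y(e^\al,z)$ intertwine the $\F$-module structure on the twisted representation; the commutator of a general state-field $Y(a,z)$ with the screening current $Y(e^\al,w)$ is controlled by Borcherds' commutator formula, and since $a\in\W_{X_N}$ kills $e^\al$ under the zero-mode, the relevant commutators collapse. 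Making the completion/convergence issues rigorous---so that infinite sums of screening-operator strings applied to $\D_{X_N}$ converge in the completed twisted module and genuinely vanish term by term---is the delicate technical heart of the argument. A secondary obstacle is the careful bookkeeping of the twist by the Coxeter element $\si$: one must verify that restricting the $\si$-twisted $\F$-module to the $W$-invariant (in particular $\si$-invariant) subalgebra $\W_{X_N}$ indeed produces an \emph{untwisted} module, so that the modes $a_{(n)}$ with $n\in\Z$ are well-defined on $\D_{X_N}$ in the first place, which is what makes the statement even sensible.
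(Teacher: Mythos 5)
Your proposal diverges from the paper's argument at its core reduction, and the step it hinges on is not correct. You assert that the Kac--Wakimoto hierarchy (plus the string equation) is ``a vertex-algebraic way of saying that the screening operators ${e^{\al}}_{(n)}$, $n\ge 0$, \ldots\ annihilate $\D_{X_N}$.'' This is false: the Hirota bilinear equations are a condition on the \emph{tensor square} $\D_{X_N}\otimes\D_{X_N}$, namely that a residue of $\sum_{\al}Y(e^{\al},z)\otimes Y(e^{-\al},z)$ acts on it in a prescribed way; they do not say that the individual modes ${e^{\al}}_{(n)}$ kill $\D_{X_N}$. If ${e^{\al}}_{(n)}\D_{X_N}=0$ held for all $\al\in R$ and all $n\ge0$, then $\D_{X_N}$ would be a singular vector for the whole affine algebra $\hat\lieg$ acting on the (completed) basic representation, which is far too strong. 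With that equivalence gone, your Step 3 --- writing $a_{(n)}\D_{X_N}$ for $a\in\W_{X_N}$ as a combination of strings of non-negative screening modes applied to $\D_{X_N}$ --- has no foundation; the characterization $\W_{X_N}=\bigcap_{\al}\Ker{e^{\al}}_{(0)}$ gives no such decomposition. The authors themselves flag exactly this route as open: they write that it is ``conceivable that the $\W$-constraints can be derived from the Kac--Wakimoto hierarchy and the string equation, but we only know how to do this for type $A_N$.''

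The paper's actual proof is quite different. After the setup you correctly anticipate (the global operators $X_t(a,\la)$ built from period integrals, with $\D_{X_N}$ in a suitable completion, and the $\si$-twisted module becoming untwisted on $\W_{X_N}$), the constraints are restated as: the Laurent expansion of $X(a,\la)\D_{X_N}$ at $\la=\infty$ has no negative powers of $\la$. Conjugating by $\widehat\S_t$ reduces this to regularity of $X_t(a,\la)\A_t$, and since the coefficients are $W$-monodromy-invariant multivalued functions with singularities only on the discriminant, polynomiality in $\la$ is equivalent to regularity at each critical value $u_i(t)$. There one decomposes $a=\sum_k a^k_{(-1)}b^k$ with $a^k\in\F_\be^{\perp}$, $b^k\in\F_\be$ for the vanishing cycle $\be$; the condition ${e^{\be}}_{(0)}a=0$ forces each $b^k$ to lie in the Virasoro vertex algebra generated by $\om_\be$ (Lemma \ref{lfbeperp}), the $X_t(a^k,\la)$ are automatically regular, and the intertwining with $\widehat\Psi_t\widehat R_t$ (Lemma \ref{R}) reduces everything to the Virasoro constraints for the Witten--Kontsevich tau-function at an $A_1$-point. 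If you want to salvage your plan, this localization at the critical values --- not the integrable hierarchy --- is the missing mechanism.
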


Since $\W_{A_N}$ coincides with $\W_{N+1}$, the above constraints were previously known for type $A_N$ (see \cite{AV,G3,BM}).
It was shown by  Adler and van Moerbeke \cite{AV} that the $\W$-constraints determine the formal power series $\D_{A_N}$ uniquely. 
We conjecture that this is true for all simple singularities.
For type $D_N$, we have an explicit form of the $\W$-constraints, so we expect that one can prove the uniqueness directly as in \cite{AV}.
It is conceivable that the $\W$-constraints can be derived from the Kac--Wakimoto hierarchy and the string equation, but we only know how to do this
for type $A_N$ (cf.\ \cite{KS,AV,vandeL}).
It will also be interesting to find a matrix model for $\D_{X_N}$ generalizing the Kontsevich model from \cite{Ko1,AV} (cf.\ \cite{Kos,Kha,DV}).

\comment{We construct the representation of the vertex algebra $V_Q$ in
terms of period integrals, which were already used in \cite{GM, FGM} to prove that $\D_{X_N}$ is a solution of the Kac--Wakimoto hierarchy. 
We provide a natural interpretation of this fact from the point
of view of vertex algebras, and give a new proof of the results of \cite{GM, FGM}. 
In more detail, let $\CC_{X_N} \subset V_Q \otimes V_Q$ be the so-called \emph{coset model}, defined as the commutant of the diagonal action of $\hat\lieg$ 
(see \cite{GKO, K2} and Sect.\ ??? below). By the Sugawara and coset constructions, there is an element $\om\in\CC_{X_N}$ such that $L_n=\om_{(n+1)}$
generate a Virasoro algebra commuting with the diagonal action of $\hat\lieg$. Note that $\CC_{X_N}$, as a subalgebra of the vertex algebra
$V_Q \otimes V_Q$, acts on the tensor product $\D_{X_N} \otimes \D_{X_N}$.

\begin{theorem}\label{t2}
The total descendant potential\/ $\D_{X_N}$ $(X=A,D,E)$ of a simple
singularity satisfies\/ $a_{(n)} (\D_{X_N}\otimes \D_{X_N}) = 0$
for all\/ $a\in\CC_{X_N}$, $n\ge0$. 
\end{theorem}

In particular, $\D_{X_N}$ solves the Kac--Wakimoto hierarchy of type $X_N$ in the principal realization, which corresponds to the equation 
given by $a=\om$, $n=1$.}

One may try to define the vertex algebra $\W_{X_N}$ for any isolated singularity by taking $R$ to be the set of \emph{vanishing cycles}
(see Sect.\ \ref{mfib} below). 
It is easy to see that the Virasoro vertex algebra $\W_2$ is always contained in $\W_{X_N}$. 
One of the problems, however, is to determine whether $\W_{X_N}$ is larger than $\W_2$, and to suitably modify the definition of $\W_{X_N}$ so that it is. 
This will be pursued in a future work.

Let $M$ be a twisted module over a vertex algebra $V$ (see \cite{FFR, D} and Sect.\ \ref{twrep} below).
Then for every $a\in V$, there is
a formal power series $Y(a,\la)$ whose coefficients are linear operators on $M$.
The main idea of the present paper is to construct globally defined operator-valued functions
$X(a,\la)$ whose Laurent series expansions at $\la=\infty$ coincide with $Y(a,\la)$.
They have the form
\beq\label{gf}
X(a,\la) = \sum_{K} I^{(K)}_a(\la)\, e_K \,, \qquad a\in V \,,
\eeq
where $\{e_K\}$ is some (graded) basis of $\End M$ and the
coefficients  $I^{(K)}_a(\la)$ are multivalued analytic functions in
$\la$ on the extended complex plane $\CP^1=\C\cup\{\infty\}$ having
a finite order pole at finitely many points $u_i\in \CP^1$.
The composition of such series and the corresponding operator product expansion
(in the form of Proposition \ref{pnprod} below) make sense only locally near
each singular  point $u_i$, using the formal $(\la-u_i)$-adic
topology. In other words, the $n$-th product of twisted fields a priori
is defined only locally near each singular point. In our case, however,
these local $n$-th products turn out to be global objects: there
is a series of the type \eqref{gf} such that its Laurent series
expansions at each singular point agree with the given ones. 

The above idea is realized here for the Heisenberg vertex algebra $\F$, giving rise to what may be called
a \emph{$W$-twisted representation}. It has the property that the monodromy operator associated to a big loop around $0$ is given by the action of a Coxeter element $\si\in W$,
while the monodromy around the other singular points $\la=u_i$ is given by simple reflections from $W$.
The construction looks very natural, and it would be interesting to find other examples as well. 
It is also interesting to compare our approach to other geometric approaches such as \cite{BD,FS,MSV}.

\subsection{Organization of the paper}
The size of the paper has increased substantially as we tried to
make the text accessible to a wider audience. We have
included several sections with background material, as well as an
extensive list of references (complete only to the best of our
knowledge). 

Section \ref{s4} reviews standard material in the theory of vertex
algebras. The main goal is to introduce the notion of a $\W$-algebra and
to construct explicit elements in the $\W$-algebra (see Proposition \ref{pwalg}, which is probably new). 

In Section \ref{twmod}, we give background material on twisted
representations of vertex algebras. We prove a formula for the operator product expansion
(see Proposition \ref{pnprod}), which may be used instead of
the Borcherds identity in the definition of a twisted
representation. This formula is used later in an essential way in
order to extend analytically the twisted fields. 

In Section \ref{s2}, we introduce
the main object of our study, the Frobenius manifold structure on the
space of miniversal deformations of a germ of a holomorphic function
with an isolated critical point. We also recall two important operator series: the calibration $\S_t$ 
and the formal asymptotical operator $R_t$, which are used to construct
Givental's quantization operator (see \eqref{giv_formula}). Finally, we introduce the
period integrals, which are an important ingredient in our construction. 

In Section \ref{sec4}, we present
Givental's quantization formalism and the definitions of the total
descendant and the total ancestor potentials.
In particular, we recall how the quantized operators $\widehat\S_t$ and $\widehat R_t$ act on formal power series.

Section \ref{savar} contains the construction of the global twisted
operators $X(a,\la)$ for all $a$ in the Heisenberg vertex algebra $\F$
(see \eqref{gf}). For $a\in\lieh\subset\F$, they are defined using period integrals.
All other operators $X(a,\la)$ are obtained from the generating ones (with $a\in\lieh$)
in terms of normally ordered products and propagators, analogously to the Wick formula
from conformal field theory. The operators $X(a,\la)$ possess remarkable properties.
Their monodromy is determined by the action of the Weyl group on $\F$.
Their Laurent series expansions at $\la=\infty$ give a $\si$-twisted representation of $\F$,
while their expansions at the other critical points $\la=u_i$ give twisted representations of certain
subalgebras of $\F$. The operators $X(a,\la)$ also have nice conjugation properties with respect to
$\widehat\S_t$ and $\widehat R_t$.

We leave the proof of the properties of the propagators for the next Section
\ref{acpf}. There we show that the Laurent expansions of the propagators
near $\infty$ and near the critical points $\la=u_i$ 
agree, i.e., they can be obtained from each other by means
of analytic continuation. This is precisely the place where we
have to use that the singularity is simple. Our argument relies on
the fact that the monodromy group is a finite reflection group and
is a quotient of the Artin--Brieskorn braid group by the normal subgroup
generated by the squares of the generators (see Lemma \ref{path-independence}). 

In Section \ref{s5} we prove \thref{t1}. We first express the $\W$-constraints as the condition that the Laurent expansions
of $X(a,\la)\D_{X_N}$ at $\la=\infty$ have no negative powers of $\la$ for $a\in\W_{X_N} \subset\F$. 
We deduce this from the regularity at each of the critical points $\la=u_i$, where the statement reduces
to the case of an $A_1$-singularity, due to the properties of $X(a,\la)$. Then the $\W$-constraints for $\D_{X_N}$ 
are reduced to a verification of the Virasoro constraints for $\D_{A_1}=\D_\pt$, which are known to be true.
\comment{
 In a similar way, in Section ???, we obtain \thref{t2} by applying our construction to the twisted
representation of $V_Q\otimes V_Q$ on $M\otimes M$.
It is interesting that in this way we obtain more equations than the standard Hirota bilinear equations of the Kac--Wakimoto hierarchy \cite{KW}.
However, one can show that these additional equations are consequences of the standard ones. }

\section{Vertex algebras and $\W$-algebras}\label{s4}

The notion of a vertex algebra introduced by Borcherds \cite{Bo}
provides a rigorous algebraic description of two-dimensional 
chiral conformal field theory (see e.g.\ \cite{BPZ, Go, DMS}).
In this section, we briefly recall the definition and several important examples; 
for more details, see \cite{FLM, K2, FB, LL}. 

\subsection{Affine Lie algebras}\label{aff}

Let $\lieg$ be a finite-dimensional Lie algebra equipped
with a symmetric invariant bilinear form $(\cdot|\cdot)$,
normalized so that the square length of a long root is $2$
in the case when $\lieg$ is simple. For $\lieg=\mathfrak{sl}_{N+1}$
this gives $(a|b)=\tr(ab)$. The \emph{affine Lie algebra}
$\hat\lieg = \lieg[t,t^{-1}] \oplus \C K$
has the Lie brackets
\begin{equation}\label{aff1}
[a_m,b_n] = [a,b]_{m+n} + m \delta_{m,-n} (a|b) K \,, \qquad
a_m=at^m \,,
\end{equation}
and $K$ is central (see \cite{K1}).

Recall that the generalized \emph{Verma module} 
$M(\Lambda_0) = \Ind^{\hat\lieg}_{\lieg[t]\oplus\C K} \C$
is defined by letting $\lieg[t]$ act trivially on $\C$
and $K$ act as $1$. Then $K$ acts as the identity on
the whole $M(\Lambda_0)$, so the level is $1$.
The unique irreducible quotient $V(\Lambda_0)$ of $M(\Lambda_0)$  
is known as the \emph{basic representation} (see \cite{K1}).
Both $V(\Lambda_0)$ and $M(\Lambda_0)$ are highest weight
representations with highest weight vectors the image of
$1\in\C$, which we will denote by $\vac$. Moreover,
due to \cite{FZ}, they both have the structure of a vertex algebra.

\subsection{Vertex algebras}\label{vert}

A \emph{vertex algebra} \cite{Bo, FLM, K2, FB, LL} is a vector space $V$ 
(space of states) with a distinguished vector $\vac\in V$ 
(vacuum vector), together with a linear map 
(state-field correspondence)
\begin{equation}\label{vert2}
Y(\cdot,\ze)\cdot \colon V \otimes V \to V(\!(\ze)\!) := V[[\ze]][\ze^{-1}] \,.
\end{equation}
Thus, for every state $a\in V$, we have the field
$Y(a,\ze) \colon V \to V(\!(\ze)\!)$. This field can be viewed as
a formal power series from $(\End V)[[\ze,\ze^{-1}]]$, which 
involves only finitely many negative powers of $\ze$ when
applied to any vector.
The coefficients
in front of powers of $\ze$ in this expansion are known as the
\emph{modes} of $a$:
\begin{equation}\label{vert4}
Y(a,\ze) = \sum_{n\in\Z} a_{(n)} \, \ze^{-n-1} \,, \qquad
a_{(n)} \in \End V \,.
\end{equation}
As usual, the \emph{formal residue} $\Res_\ze$ of a formal power series is defined as the coefficient of $\ze^{-1}$.
Then 
\begin{equation}\label{fres}
a_{(n)} = \Res_\ze \ze^n Y(a,\ze) \,.
\end{equation}

The vacuum vector $\vac$ plays the role of an identity in the sense that
\begin{equation*}
a_{(-1)}\vac = \vac_{(-1)} a = a \,, \qquad 
a_{(n)}\vac = 0 \,, \quad n\geq 0 \,.
\end{equation*}
In particular, $Y(a,\ze)\vac \in V[[\ze]]$ is regular at $\ze=0$, and
its value at $\ze=0$ is equal to $a$.
The main axiom for a vertex algebra is the following \emph{Borcherds identity}
(also called Jacobi identity \cite{FLM})
satisfied by the modes:
\begin{equation}\label{vert5}
\begin{split}
\sum_{j=0}^\infty (-1)^j & \binom{n}{j} 
\Bigl( 
a_{(m+n-j)}(b_{(k+j)}c)
- (-1)^n \, b_{(k+n-j)}(a_{(m+j)}c)
\Bigr)
\\ 
&=
\sum_{j=0}^\infty \binom{m}{j} (a_{(n+j)}b)_{(k+m-j)}c \,,
\end{split}
\end{equation}
where $a,b,c \in V$. Observe that the above sums are finite, because
$a_{(n)}b = 0$ for sufficiently large $n$.

We say that a vertex algebra $V$ is (strongly) \emph{generated} by a subset $S\subset V$ if $V$
is linearly spanned by the vacuum $\vac$ and all elements of the form
\begin{equation*}
\quad {a_1}_{(n_1)} \cdots {a_r}_{(n_r)} \vac \,, \qquad \text{where} \quad r\geq1 \,, \; a_i \in S \,, \; n_i<0 \,.
\end{equation*}

\subsection{Lattice vertex algebras}\label{lat}
Let $Q$ be an (even) integral lattice with a symmetric
nondegenerate bilinear form $(\cdot|\cdot)$. 
We denote by   
$\lieh = \C\otimes_\Z Q$ the corresponding complex 
vector space considered as an abelian Lie algebra.
The affine Lie algebra  
$\hat\lieh = \lieh[t,t^{-1}] \oplus \C K$ 
is called the \emph{Heisenberg algebra}.
Its irreducible highest weight representation
\begin{equation*}
\F := M(\Lambda_0) \cong S(\lieh[t^{-1}]t^{-1})
\end{equation*}
is known as the (bosonic) \emph{Fock space}.

Following \cite{FK, Bo}, we consider a certain
$2$-cocycle $\ep\colon Q \times Q \to \{\pm1\}$ 
such that
\begin{equation}\label{lat2}
\ep(\al,\al) = (-1)^{|\al|^2(|\al|^2+1)/2}  \,, \quad
|\al|^2:=(\al|\al) \,,
\qquad \al\in Q \,,
\end{equation}
and the algebra $\C_\ep[Q]$ with a basis
$\{ e^\al \}_{\al\in Q}$ and multiplication
\begin{equation*}
e^\al e^\be = \ep(\al,\be) e^{\al+\be} \,.
\end{equation*}
Such a $2$-cocycle $\ep$ is unique up to equivalence, and can be chosen
to be bimultiplicative.

The \emph{lattice vertex algebra} \cite{Bo, FLM, K2, FB, LL}
is the tensor product $V_Q=\F\otimes\C_\ep[Q]$,
where the vacuum vector is $\vac\otimes e^0$.
We let the Heisenberg algebra act on $V_Q$ so that
\begin{equation*}
a_n e^\be = \delta_{n,0} (a|\be) e^\be \,, \quad n\geq0 \,, \qquad
a\in\lieh \,, \; a_n=at^n \,.
\end{equation*}
%
The state-field correspondence on $V_Q$ is uniquely determined by the
generating fields:
\begin{align}\label{lat4}
& Y(a_{-1}\vac,\ze) = \sum_{n\in\Z} a_n \, \ze^{-n-1} \,, \qquad a\in\lieh \,,
\\ \label{lat5}
& Y(e^\al,\ze) = e^\al \ze^{\al_0} 
\exp\Bigl( \sum_{n<0} \al_n \frac{\ze^{-n}}{-n} \Bigr) 
\exp\Bigl( \sum_{n>0} \al_n \frac{\ze^{-n}}{-n} \Bigr) \,,
\end{align}
where $\ze^{\al_0} e^\be = \ze^{(\al|\be)} e^\be$.

Notice that $\F\subset V_Q$ is a vertex subalgebra, which we call the \emph{Heisenberg vertex algebra}.
The map $\lieh\to\F$ given by $a\mapsto a_{-1}\vac$ is injective.
{}From now on, we will slightly abuse the notation and identify
$a\in\lieh$ with $a_{-1}\vac \in\F$; then $a_{(n)}=a_n$ for all $n\in\Z$.


\subsection{The vertex algebra $\W_{X_N}$}\label{sec_2.5}

Let $\lieg$ be a finite-dimensional simple Lie algebra of type $X_N$ $(X=A,D,E)$.
We denote by $R$ and $Q$ the set of roots and the root lattice, respectively.
Following \cite{FF2}, we define $\W_{X_N}$ 
as the intersection of the Fock space $\F\subset V_Q$ and the kernels of all \emph{screening operators}
\begin{equation*}
{e^{\al}}_{(0)} = \Res_\ze Y(e^{\al},\ze) \,, \qquad \al\in R \,.
\end{equation*}
In a vertex algebra, any zero mode acts as a derivation
of the products $a_{(n)}b$ defined by \eqref{vert4}, and the kernel
of a derivation is a vertex subalgebra (see e.g.\ \cite{K2}).
Thus, $\W_{X_N} \subset \F$ is a vertex subalgebra.
The vertex algebras $\W_{X_N}$ are 
examples of non-linear extensions of the 
Virasoro algebra known as \emph{$\W$-algebras} 
(see \cite{Z, FL, BS, FF1, FF2, FKW, FKRW, FB, DK1} and the references therein).

The algebra $\W_{X_N}$ contains the \emph{Virasoro element}
\begin{equation*}
\om = \frac12\sum_{i=1}^N v^i_{(-1)} v_i  \in\W_{X_N} \subset \F \,,
\end{equation*}
where $\{v_i\}$ and $\{v^i\}$ are bases of $\lieh$ dual with respect to $(\cdot|\cdot)$.
The modes $L_n = \om_{(n+1)}$ satisfy the commutation relations of the Virasoro algebra with central charge $N$ 
(see e.g.\ \cite{K2}).
The operator $L_0$ provides a grading of $V_Q$ such that
$\deg a_n = -n$ and $\deg e^\al = |\al|^2/2$ for $a\in\lieh$, $\al\in Q$.

It was proved in \cite{FF2, FKRW} that the vertex algebra $\W_{X_N}$
is freely generated by $N$ elements of degrees $m_1+1,\dots,m_N+1$,
where $m_k$ are the exponents of $\lieg$. This means that
$\W_{X_N}$ has a PBW-type basis (see \cite{DK1} for more on freely generated vertex algebras).
For convenience, the exponents of type $ADE$ are listed in Table \ref{table1} in Sect.\ \ref{ssing} below.

\begin{remark}\label{rwn}
In the case $\lieg=\liesl_{N+1}$, the vertex algebra $\W_{A_N}$ coincides
with the Zamolodchikov--Fateev--Lukyanov algebra $\W_{N+1}$ introduced in
\cite{Z, FL}, for central charge $N$.
In particular, $\W_{A_1} = \W_2$ is the
Virasoro vertex algebra with central charge $1$ (cf.\ \cite{FZ}).
\end{remark}

The Frenkel--Kac construction of the basic representation $V(\Lambda_0)$
can be interpreted as an isomorphism of vertex algebras $V(\Lambda_0) \cong V_Q$
(see \cite{FK, Se, K1, K2}). The Lie algebra $\lieg$ is realized in $V_Q$
as the zero modes $a_{(0)} = a_0$ for $a\in\lieh$ 
and ${e^\al}_{(0)}$ for $\al\in R$.
Hence,
$\W_{X_N}$ can be identified with the space 
of $\lieg$-invariants in the basic representation 
of $\hat\lieg$, first considered by I.\ Frenkel \cite{F}.
In particular, all elements of $\W_{X_N}$ are fixed by the Weyl group $W$ of~$\lieg$. 

\begin{example}\label{ewalg1}
For every $d \geq 1$ we have \cite{F}:
\begin{equation}\label{fr1}
\om^d := \sum_{i=1}^N v^i_{(-d)} v_i - \sum_{\al\in R} {e^\al}_{(-d)} e^{-\al}
\in\W_{X_N} \,.
\end{equation}
Note that $\deg\om^d=d+1$, and $\om^1$ is a scalar multiple of $\om$ by the Sugawara construction
(see e.g.\ \cite{K2}). 
\end{example}

Another way to construct elements of $\W_{X_N}$ is provided by the next result, which seems new.

\begin{proposition}\label{pwalg}
Suppose that\/ $\la_0\in\lieh$ is such that\/ $(\la_0|\al)=0,\pm1$ for all\/ $\al\in R$. Then
\begin{equation}\label{fr5}
\nu^d := \sum_{\la\in W\la_0} {e^{\la}}_{(-d)} e^{-\la}
\in\W_{X_N} \,.
\end{equation}
\end{proposition}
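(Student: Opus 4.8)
The plan is to verify that $\nu^d$ satisfies the two conditions defining membership in $\W_{X_N}\subset\F$: that it has charge $0$, so that it lies in $\F=\F\otimes e^0$; and that every screening operator ${e^\al}_{(0)}$ $(\al\in R)$ annihilates it. The first condition is immediate: by \eqref{lat5} the field $Y(e^\la,\ze)$ raises the $\C_\ep[Q]$-charge by $\la$, so each summand ${e^\la}_{(-d)}e^{-\la}$ has charge $\la+(-\la)=0$ and hence lies in $\F$, and so does $\nu^d$. (When $\la\in W\la_0$ is not in $Q$, one reads ${e^\la}_{(-d)}e^{-\la}$ through \eqref{lat5} after fixing a bimultiplicative extension of $\ep$ to the weight lattice $P\supseteq Q$; it equals $\ep(\la,-\la)$ times the coefficient of $\ze^{\,d-1+|\la_0|^2}$ in $\exp\bigl(\sum_{n<0}\la_n\ze^{-n}/(-n)\bigr)\vac$, which is a well-defined vector of $\F$ — and it vanishes unless $|\la_0|^2\in\Z$.)

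For the screening condition, fix $\al\in R$. Since the zero mode ${e^\al}_{(0)}$ is a derivation of the $n$-th products, $[{e^\al}_{(0)},b_{(n)}]=({e^\al}_{(0)}b)_{(n)}$, applying it with $b=e^\la$, $n=-d$ to $e^{-\la}$ yields
\begin{equation*}
{e^\al}_{(0)}\bigl({e^\la}_{(-d)}e^{-\la}\bigr)=\bigl({e^\al}_{(0)}e^\la\bigr)_{(-d)}e^{-\la}+{e^\la}_{(-d)}\bigl({e^\al}_{(0)}e^{-\la}\bigr).
\end{equation*}
Next I would compute ${e^\al}_{(0)}e^{\pm\la}=\Res_\ze Y(e^\al,\ze)e^{\pm\la}$ from \eqref{lat5}: that series equals $\ep(\al,\pm\la)\,\ze^{\pm(\al|\la)}e^{\al\pm\la}$ plus terms of strictly higher order in $\ze$. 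The hypothesis $(\la_0|\al)\in\{0,\pm1\}$ is $W$-invariant, because $(w\la_0|\al)=(\la_0|w^{-1}\al)$ and $w^{-1}\al\in R$; hence $(\al|\la)\in\{0,\pm1\}$, so the residue vanishes unless $\pm(\al|\la)=-1$, in which case ${e^\al}_{(0)}e^{\pm\la}=\ep(\al,\pm\la)\,e^{\al\pm\la}$. Moreover, in the simply-laced case $\al\pm\la=s_\al(\pm\la)$, which again lies in $W\la_0\cup(-W\la_0)$.

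Summing over $\la\in W\la_0$, the terms that survive split into the contributions $\ep(\al,\la)\,(e^{s_\al\la})_{(-d)}e^{-\la}$ for $\la$ with $(\al|\la)=-1$, and $\ep(\al,-\la)\,{e^\la}_{(-d)}e^{-s_\al\la}$ for $\la$ with $(\al|\la)=1$. The involution $s_\al$ interchanges the sets $\{\la\in W\la_0:(\al|\la)=1\}$ and $\{\la\in W\la_0:(\al|\la)=-1\}$, so reindexing the first family by $\la\mapsto s_\al\la$ turns it into $\sum_{(\al|\la)=1}\ep(\al,\la-\al)\,(e^\la)_{(-d)}e^{-s_\al\la}$, and therefore
\begin{equation*}
{e^\al}_{(0)}\nu^d=\sum_{\la\in W\la_0,\ (\al|\la)=1}\bigl(\ep(\al,\la-\al)+\ep(\al,-\la)\bigr)(e^\la)_{(-d)}e^{-s_\al\la}.
\end{equation*}
Since $\ep$ is bimultiplicative, $\ep(\al,\la-\al)=\ep(\al,\la)\,\ep(\al,\al)^{-1}$ and $\ep(\al,-\la)=\ep(\al,\la)^{-1}=\ep(\al,\la)$, while $\ep(\al,\al)=(-1)^{|\al|^2(|\al|^2+1)/2}=(-1)^3=-1$ because $|\al|^2=2$; hence every coefficient equals $-\ep(\al,\la)+\ep(\al,\la)=0$. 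Thus ${e^\al}_{(0)}\nu^d=0$ for every $\al\in R$, so $\nu^d\in\W_{X_N}$. The only real work is the cocycle-sign bookkeeping, and the whole cancellation rests on the identity $\ep(\al,\al)=-1$ for a root $\al$; this is precisely why the analogous sum over the root orbit $R$ — where the pairing reaches the value $2$ — must be corrected by the Heisenberg term $\sum_i v^i_{(-d)}v_i$ in order to land in $\W_{X_N}$, as in \eqref{fr1}.
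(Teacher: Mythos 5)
Your proof is correct and follows essentially the same route as the paper's: apply the derivation property of ${e^\al}_{(0)}$, compute ${e^\al}_{(0)}e^{\pm\la}$ as a residue from \eqref{lat5} using $(\al|\la)\in\{0,\pm1\}$, and cancel the surviving terms in pairs matched by $r_\al$ via the sign identity $\ep(\al,-r_\al\la)=\ep(\al,\la)\ep(\al,\al)=-\ep(\al,\la)$. Your extra remarks on the charge-zero condition and on extending $\ep$ bimultiplicatively to the weight lattice when $\la_0\notin Q$ are sensible refinements but do not change the argument.
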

%
%
\begin{proof}
It suffices to check that ${e^\al}_{(0)} \nu^d = 0$ for all $\al\in R$. Since ${e^\al}_{(0)}$ is a derivation,
we have
\begin{equation*}
{e^\al}_{(0)} \nu^d = \sum_{\la\in W\la_0} ({e^\al}_{(0)} e^{\la})_{(-d)} e^{-\la}
+ \sum_{\la\in W\la_0} {e^{\la}}_{(-d)} ({e^\al}_{(0)} e^{-\la}) \,.
\end{equation*}
By \eqref{lat5},
\begin{equation*}
{e^\al}_{(0)} e^\la
= \Res_\ze Y(e^\al,\ze) e^\la 
= \Res_\ze \ep(\al,\la) \ze^{(\al|\la)} \exp\Bigl( \sum_{n<0} \al_n \frac{\ze^{-n}}{-n} \Bigr) e^{\la+\al}
\end{equation*}
is zero for $(\al|\la) \geq 0$ and is equal to $\ep(\al,\la) e^{\la+\al}$ when $(\al|\la) =-1$. 
Now if $(\al|\la) =-1$ for some $\la=w\la_0$, then $(\al|-r_\al\la) = (\al|-\la-\al) = -1$ as well.
Due to \eqref{lat2} and bimultiplicativity,
\begin{equation*}
\ep(\al,-r_\al\la) = \ep(\al,\la+\al) = \ep(\al,\la)\ep(\al,\al) = -\ep(\al,\la) \,.
\end{equation*}
Therefore, the terms with ${e^{\la}}_{(-d)} e^{-\la}$ and ${e^{r_\al\la}}_{(-d)} e^{-r_\al\la}$ cancel.
\end{proof}

\begin{example}\label{ewalg2}
The roots of type $A_N$ can be realized as $v_i-v_j$ where $\{v_i\}$ is an orthonormal basis for $\R^{N+1}$.
The Weyl group $W=S_{N+1}$ acts by permutations on $v_1,\dots,v_{N+1}$.
Then $\la_0=v_1$ satisfies the condition
of \prref{pwalg}, and
\begin{equation*}
\nu^d = \sum_{i=1}^{N+1} {e^{v_i}}_{(-d)} e^{-v_i} \mod (v_1+\cdots+v_{N+1}) \in\W_{A_N} \,.
\end{equation*}
Note that the fields $Y(e^{\pm v_i},\ze)$ are the so-called charged free fermions (see e.g.\ \cite{K2}).
It follows from the results of \cite{F,FKRW} that the elements $\nu^d$ ($1\leq d\leq N$) generate $\W_{A_N}$.
\end{example}

\begin{example}\label{ewalg3}
The roots of type $D_N$ can be realized as $\pm v_i \pm v_j$, where $\{v_i\}$ is an orthonormal basis
for $\R^N$. The Weyl group acts on $v_1,\dots,v_{N}$ by permutations and an even number of sign changes.
Then $\la_0=v_1$ works and
\begin{equation*}
\nu^d = \sum_{i=1}^N {e^{v_i}}_{(-d)} e^{-v_i} + \sum_{i=1}^N {e^{-v_i}}_{(-d)} e^{v_i} 
\in \W_{D_N} \,.
\end{equation*}
It is not hard to check that we also have
\begin{equation*}
\pi^N := {v_1}_{(-1)} \cdots {v_{N-1}}_{(-1)} v_N \in \W_{D_N} \,.
\end{equation*}
Due to Theorem 14.2 in \cite{KWY}, $\W_{D_N} $ is generated by $\pi^N$ and $\nu^d$ $(d\geq1)$.
\end{example}

\begin{example}\label{ewalg4}
The root system of type $E_N$ $(N=6,7)$ can be realized in terms of an orthonormal basis $\{v_i\}$ for $\R^{N+1}$
(see e.g.\ \cite{K1}, Chapter 6). Then $\la_0=v_1+v_2$ satisfies the condition of \prref{pwalg}.
In the case of $E_8$ such an element $\la_0$ does not exist.
\end{example}

\section{Twisted representations of vertex algebras}\label{twmod}

In this section, we review the notion of a twisted representation of a
vertex algebra, and we derive several properties of twisted representations.
We also discuss twisted representations of the $\W$-algebras $\W_{X_N}$.

\subsection{Definition of twisted representation}\label{twrep}
Let $V$ be a vertex algebra, as in Sect.\ \ref{vert}.
A \emph{representation} (or \emph{module}) of $V$ is a vector space $M$ endowed with a
linear map $Y(\cdot,\ze)\cdot \colon V \otimes M \to M(\!(\ze)\!)$
(cf.\ \eqref{vert2}, \eqref{vert4}) such that the Borcherds identity
\eqref{vert5} holds for $a,b\in V$, $c\in M$ (see \cite{FB, LL}).

Now let $\si$ be an automorphism of $V$ of a finite order $h$. Then $\si$ is diagonalizable.
In the definition of a \emph{$\si$-twisted representation} $M$ of $V$ \cite{FFR, D}, the image of the
above map $Y$ is allowed to have nonintegral (rational) powers of $\ze$.
More precisely,
\begin{equation}\label{twlat1}
Y(a,\ze) = \sum_{n\in p+\Z} a_{(n)} \, \ze^{-n-1} \,, \qquad
\text{if} \quad \si a = e^{-2\pi\ii p} a \,, \; p\in\frac1h\Z \,,
\end{equation}
where $a_{(n)} \in \End M$.
Equivalently, the monodromy around $\ze=0$ is given by the action of $\si$:
\begin{equation}\label{twrep2}
Y(\si a,\ze) = Y(a, e^{2\pi\ii}\ze) \,, \qquad a\in V \,.
\end{equation}
The Borcherds identity \eqref{vert5} satisfied by the modes remains the
same in the twisted case.

The above notion of a twisted representation axiomatizes the properties of
the so-called ``twisted vertex operators'' \cite{KP,Le1,FLM1,Le2}, which were used
in the construction of the ``moonshine module'' vertex algebra in \cite{FLM}.
When restricted to the $\si$-invariant subalgebra $V^\si \subset V$ 
(known as an \emph{orbifold}; see \cite{DVVV, KT, DLM} among many other works),
a $\si$-twisted representation for $V$ becomes untwisted for $V^\si$.

\comment{
\begin{remark}
The untwisted representations of the lattice vertex algebra $V_Q$ provide the
Frenkel--Kac construction of level $1$ representations of $\hat\lieg$ 
in the homogeneous realization (see Sect.\ \ref{sec_2.5} and \cite{FK, Se, K1, K2}). 
%
When $\si$ is a Coxeter element, from the $\si$-twisted representations of $V_Q$
one obtains the level 
$1$ representations of $\hat\lieg$ in the principal realization
(see \cite{LW, KKLW, K1}).
\end{remark}
}

\subsection{Consequences of the Borcherds identity}\label{sborid}
For a rational function $f(\ze_1,\ze_2)$ with poles 
only at $\ze_1=0$, $\ze_2=0$ or $\ze_1=\ze_2$, we denote by 
$\io_{\ze_1,\ze_2}$ (respectively, $\io_{\ze_2,\ze_1}$) its expansion
in the domain $|\ze_1| > |\ze_2| > 0$ (respectively, $|\ze_2| > |\ze_1| > 0$).
Explicitly, we have
\begin{equation}\label{iota}
\begin{split}
& \io_{\ze_1,\ze_2} \ze_{12}^n 
= \sum_{j=0}^\infty \binom{n}{j} \ze_1^{n-j} (-\ze_2)^j
\,, \\
& \io_{\ze_2,\ze_1} \ze_{12}^n 
= \sum_{j=0}^\infty \binom{n}{j} \ze_1^j (-\ze_2)^{n-j} \,,
\qquad \text{where} \quad \ze_{12} = \ze_1-\ze_2 \,.
\end{split}
\end{equation}
In particular,
\begin{equation}\label{fde}
\de(\ze_1,\ze_2) 
:= (\io_{\ze_1,\ze_2}- \io_{\ze_2,\ze_1}) \ze_{12}^{-1}
= \sum_{j\in\Z} \ze_1^{-j-1} \ze_2^j
\end{equation}
is the formal \emph{delta-function} (see e.g. \cite{K2,LL}).

The Borcherds identity \eqref{vert5} can be stated equivalently
as follows (see \cite{FFR, D, DL2, BK1}). 

\begin{lemma}\label{lborid}
The Borcherds identity \eqref{vert5} for a\/ $\si$-twisted representation $M$ of a vertex algebra $V$
is equivalent to{\rm:}
\begin{equation}\label{twbor}
\begin{split}
\Res_{\ze_{12}} & Y(Y(a,\ze_{12})b,\ze_2)c \; \io_{\ze_2,\ze_{12}} f(\ze_1,\ze_2) \ze_1^{p}
\\
=& \Res_{\ze_1} Y(a,\ze_1)Y(b,\ze_2)c \; \io_{\ze_1,\ze_2} f(\ze_1,\ze_2) \ze_1^{p}
\\
&- \Res_{\ze_1} Y(b,\ze_2)Y(a,\ze_1)c \; \io_{\ze_2,\ze_1} f(\ze_1,\ze_2) \ze_1^{p}
\end{split}
\end{equation}
for\/ $a,b\in V$, $c\in M$ such that\/ $\si a = e^{-2\pi\ii p} a$,
and every rational function\/ $f(\ze_1,\ze_2)$ with poles 
only at\/ $\ze_1=0$, $\ze_2=0$ or\/ $\ze_1=\ze_2$.
\end{lemma}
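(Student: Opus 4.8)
The plan is to show that the two formulations are equivalent by massaging the Borcherds identity \eqref{vert5} into the residue form \eqref{twbor}, and conversely extracting \eqref{vert5} from \eqref{twbor} by a suitable choice of $f$. First I would recall the mode expansion \eqref{twlat1}: since $\si a = e^{-2\pi\ii p}a$, the field $Y(a,\ze_1)$ has only powers $\ze_1^{-n-1}$ with $n\in p+\Z$, so $Y(a,\ze_1)\ze_1^{p}$ involves only integral powers of $\ze_1$ and the residues $\Res_{\ze_1}$ in \eqref{twbor} make sense. The field $Y(b,\ze_2)$ applied to $c\in M$ may also carry a fractional shift, but since $b$ plays a symmetric role on both sides it causes no trouble; similarly $Y(Y(a,\ze_{12})b,\ze_2)$ is handled because $Y(a,\ze_{12})b\in V(\!(\ze_{12}^{1/h})\!)$ with the $\ze_{12}$-powers again shifted by $p$, matching the factor $\ze_1^{p}$ after the substitution $\ze_1=\ze_2+\ze_{12}$.

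The key computational step is to take $f(\ze_1,\ze_2)=\ze_1^{m'}\ze_2^{k}\ze_{12}^{n}$ for $m',k\in\Z$ and $n\in\Z$ — equivalently, expand a general $f$ as a combination of such monomials — and observe that with $m:=m'+p$ one recovers exactly the three groups of terms in \eqref{vert5}. Concretely: on the right-hand side, $\Res_{\ze_1}Y(a,\ze_1)Y(b,\ze_2)c\,\io_{\ze_1,\ze_2}\ze_1^{m'+p}\ze_2^k\ze_{12}^n$ expands $\io_{\ze_1,\ze_2}\ze_{12}^n=\sum_j\binom{n}{j}\ze_1^{n-j}(-\ze_2)^j$ by \eqref{iota}, and picking out the residue in $\ze_1$ produces $\sum_j(-1)^j\binom{n}{j}a_{(m+n-j)}(b_{(k+j)}c)$ after using \eqref{vert4} and \eqref{fres}; the second right-hand term gives the $b\,a$ term with the sign $(-1)^n$ coming from the opposite expansion $\io_{\ze_2,\ze_1}$; and the left-hand side, after the change of variables $\ze_1=\ze_2+\ze_{12}$ together with $\io_{\ze_2,\ze_{12}}\ze_1^{m'}=\io_{\ze_2,\ze_{12}}(\ze_2+\ze_{12})^{m'}=\sum_j\binom{m'}{j}\ze_2^{m'-j}\ze_{12}^j$, yields $\sum_j\binom{m}{j}(a_{(n+j)}b)_{(k+m-j)}c$ — here one uses that $\binom{m'}{j}=\binom{m}{j}$ is legitimate only after absorbing $\ze_1^p=(\ze_2+\ze_{12})^p\cdot(\ze_2+\ze_{12})^{-p}\cdot\ze_1^{p}$ carefully, which I address below. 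For the converse direction, since the monomials $\ze_1^{m'}\ze_2^k\ze_{12}^n$ span (in the appropriate expansion) the space of rational functions with the allowed poles, validity of \eqref{twbor} for all such $f$ is equivalent to validity for each monomial, hence to \eqref{vert5} for all $m\in p+\Z$, $k\in\Z$, $n\in\Z$.

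The main obstacle I expect is bookkeeping the fractional power $\ze_1^p$ correctly through the substitution $\ze_1=\ze_2+\ze_{12}$ on the left-hand side: one must expand $(\ze_2+\ze_{12})^{p}$ as $\io_{\ze_2,\ze_{12}}(\ze_2+\ze_{12})^p=\ze_2^p\sum_{j\geq0}\binom{p}{j}(\ze_{12}/\ze_2)^j$, combine it with the integral-power part of $f$, and check that the net effect is exactly to replace the binomial coefficient $\binom{n}{j}$ appearing in \eqref{vert5} by $\binom{m}{j}$ with $m=m'+p$ — i.e.\ that the fractional shift rides along untouched, precisely as asserted in the sentence ``The Borcherds identity \eqref{vert5} satisfied by the modes remains the same in the twisted case.'' Once this identification of $\ze_1^p(\ze_2+\ze_{12})^{-p}$-type factors is done and one notes that all sums are finite (because $a_{(n)}b=0$ and $a_{(n)}c=0$ for $n\gg0$, so the expansions $\io_{\ze_i,\ze_j}$ truncate when paired against $b$, $c$), the equivalence follows. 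I would organize the writeup as: (1) reduce to monomial $f$; (2) compute the RHS residues in terms of modes; (3) compute the LHS residue via the change of variables, tracking $\ze_1^p$; (4) match with \eqref{vert5}; (5) reverse the argument using that monomials span. Standard references for this manipulation are \cite{FFR, D, DL2, BK1}, and the untwisted case is in \cite{K2, LL}.
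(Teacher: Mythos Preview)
The paper does not give its own proof of this lemma; it cites \cite{FFR, D, DL2, BK1} and moves on. Your approach---reducing to monomials $f=\ze_1^{m'}\ze_2^{k'}\ze_{12}^n$ and matching residues against the mode form of \eqref{vert5}---is exactly the standard argument from those references, and it is correct in outline.

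Two points of confusion to clean up. First, you write that $Y(a,\ze_{12})b \in V(\!(\ze_{12}^{1/h})\!)$; this is wrong. The inner map $Y(a,\ze_{12})b$ is the \emph{vertex algebra} state-field map \eqref{vert2}, not the twisted module map, so it lands in $V(\!(\ze_{12})\!)$ with only integral powers of $\ze_{12}$. The fractional shift lives entirely in the outer $Y(\,\cdot\,,\ze_2)$ acting on $c\in M$, and in the factor $\ze_1^p$. Second, your worry about reconciling $\binom{m'}{j}$ with $\binom{m}{j}$ is self-inflicted: do not expand $\ze_1^{m'}$ and $\ze_1^p$ separately. Combine them first into $\ze_1^m$ with $m=m'+p$, and then expand
\[
\io_{\ze_2,\ze_{12}}\,\ze_1^m \;=\; \io_{\ze_2,\ze_{12}}(\ze_2+\ze_{12})^m \;=\; \sum_{j\ge0}\binom{m}{j}\ze_2^{\,m-j}\ze_{12}^{\,j},
\]
which is a perfectly good formal binomial series for arbitrary $m\in\C$. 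This gives $\binom{m}{j}$ directly, matching the right-hand side of \eqref{vert5}. With these two corrections your steps (1)--(5) go through cleanly; the reverse direction is immediate since the monomials $\ze_1^{m'}\ze_2^{k'}\ze_{12}^n$ span the space of rational functions with the prescribed poles.
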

Assume that 
$\si a = e^{-2\pi\ii p} a$ and $\si b = e^{-2\pi\ii q} b$ with $p,q\in\frac1h\Z$.
Let $N_{ab}$ be a non-negative integer such that $a_{(n)}b=0$ for all $n \geq N_{ab}$. 
Then setting $f(\ze_1,\ze_2) = \ze_1^{m'} \ze_{12}^{N_{ab}}$
in \eqref{twbor} for all $m'\in\Z$, we obtain the \emph{locality} property \cite{DL1,Li}
\begin{equation}\label{locpr}
\ze_{12}^{N_{ab}} \, Y(a,\ze_1) Y(b,\ze_2) = \ze_{12}^{N_{ab}} \, Y(b,\ze_2) Y(a,\ze_1) \,.
\end{equation}
An important consequence of \eqref{locpr} is that for every $c\in M$
\begin{equation*}
\ze_{12}^{N_{ab}} \, Y(a,\ze_1) Y(b,\ze_2) c \in \ze_1^{-p} \ze_2^{-q} M(\!(\ze_1,\ze_2)\!) \,.
\end{equation*}
The elements of this space have the powers of both $\ze_1$ and $\ze_2$ bounded from below.
Therefore, it makes sense to set $\ze_1=\ze_2$ in such a series, and the result is an element
of $\ze_2^{-p-q} M(\!(\ze_2)\!)$. The same is true if we first differentiate the series.

\begin{proposition}\label{pnprod}
Let\/ $V$ be a vertex algebra, $\si$ an automorphism of\/ $V$, and\/ $M$ a $\si$-twisted representation of\/ $V$.
Then
\begin{equation}\label{locpr3}
\frac1{k!} \d_{\ze_1}^k \Bigl( \ze_{12}^{N} \, Y(a,\ze_1) Y(b,\ze_2) c \Bigr)\Big|_{\ze_1=\ze_2}
= Y(a_{(N-1-k)} b, \ze_2) c
\end{equation}
for all\/ $a,b\in V$, $c\in M$, $k\geq0$, and sufficiently large\/ $N$, where\/ $\ze_{12} = \ze_1-\ze_2$.
We can take\/ $N=N_{ab}$ where\/ $N_{ab}$ is such that\/ \eqref{locpr} holds.
\end{proposition}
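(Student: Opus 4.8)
The plan is to extract the formula \eqref{locpr3} from the locality identity \eqref{locpr} together with the twisted Borcherds identity in the form of \leref{lborid}, by choosing the rational function $f$ appropriately. First I would record the two basic facts that the statement rests on: that for $N=N_{ab}$ the product $\ze_{12}^N Y(a,\ze_1)Y(b,\ze_2)c$ lies in $\ze_1^{-p}\ze_2^{-q}M(\!(\ze_1,\ze_2)\!)$, so that it and all its $\ze_1$-derivatives may be evaluated at $\ze_1=\ze_2$, and that $Y(Y(a,\ze_{12})b,\ze_2)c=\sum_{n}Y(a_{(n)}b,\ze_2)c\,\ze_{12}^{-n-1}$ with only finitely many negative powers of $\ze_{12}$ since $a_{(n)}b=0$ for $n\ge N$. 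Thus $\ze_{12}^N\,Y(Y(a,\ze_{12})b,\ze_2)c$ is a power series in $\ze_{12}$, and its Taylor coefficient in front of $\ze_{12}^{k}$ is precisely $Y(a_{(N-1-k)}b,\ze_2)c$; equivalently $\tfrac1{k!}\d_{\ze_{12}}^k\bigl(\ze_{12}^N Y(Y(a,\ze_{12})b,\ze_2)c\bigr)\big|_{\ze_{12}=0}=Y(a_{(N-1-k)}b,\ze_2)c$. Since $\ze_{12}=\ze_1-\ze_2$ with $\ze_2$ held fixed, $\d_{\ze_1}$ and $\d_{\ze_{12}}$ agree, and setting $\ze_1=\ze_2$ is the same as setting $\ze_{12}=0$. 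So the right-hand side of \eqref{locpr3} is exactly the left-hand side with $Y(a,\ze_1)Y(b,\ze_2)c$ replaced by the "iterated" product $Y(Y(a,\ze_{12})b,\ze_2)c$, and the whole proposition reduces to the claim that these two expressions have the same $\ze_1$-Taylor expansion around $\ze_1=\ze_2$.

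To prove that, I would apply \leref{lborid} with the test function $f(\ze_1,\ze_2)=\ze_1^{m'}\ze_{12}^{N}$, exactly as is done in the excerpt to derive locality, but now keeping track of all three terms rather than just the difference of the last two. Multiplying \eqref{twbor} by $\ze_1^{-p}$ is harmless; with this choice of $f$ the expansions $\io_{\ze_2,\ze_{12}}f\,\ze_1^p$ and $\io_{\ze_1,\ze_2}f\,\ze_1^p$ and $\io_{\ze_2,\ze_1}f\,\ze_1^p$ differ only in how $\ze_{12}^N$ and $\ze_1^{m'}=(\ze_2+\ze_{12})^{m'}$ are re-expanded. The key point is that after multiplying by $\ze_{12}^N$ the factor $Y(a,\ze_1)Y(b,\ze_2)c$ becomes a genuine (two-sided-Laurent but lower-bounded) series in which one may freely commute the two expansion operators, so the combination $\io_{\ze_1,\ze_2}-\io_{\ze_2,\ze_1}$ acting on the entire integrand produces a formal delta-function $\de(\ze_1,\ze_2)$ (as in \eqref{fde}), up to its derivatives coming from the $\ze_1^{m'}$ and $\ze_{12}^{N}$ factors. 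Taking $\Res_{\ze_1}$ against $\de$-type distributions is precisely the operation "evaluate at $\ze_1=\ze_2$," possibly after differentiating; running over all $m'\in\Z$ and matching powers of $\ze_2$ then yields the identity of Taylor coefficients claimed above. The left-hand side of \eqref{twbor} with this $f$ contributes, via $\io_{\ze_2,\ze_{12}}$, exactly the iterated-product side $Y(Y(a,\ze_{12})b,\ze_2)c$ after the residue in $\ze_{12}$ is computed.

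Concretely I would organize the argument as: (i) fix $N=N_{ab}$, set $g(\ze_1,\ze_2):=\ze_{12}^N\,Y(a,\ze_1)Y(b,\ze_2)c$ and $h(\ze_1,\ze_2):=\ze_{12}^N\,Y(Y(a,\ze_{12})b,\ze_2)c$, both of which lie in $\ze_1^{-p}\ze_2^{-q}M(\!(\ze_1,\ze_2)\!)$ resp.\ in $\ze_2^{-p-q}M(\!(\ze_2)\!)[[\ze_{12}]]$; (ii) use \leref{lborid} with $f=\ze_1^{m'}\ze_{12}^N$ to show $\Res_{\ze_1}\bigl(h(\ze_1,\ze_2)\io_{\ze_2,\ze_{12}}\ze_1^{m'}\bigr)=\Res_{\ze_1}\bigl(g(\ze_1,\ze_2)(\io_{\ze_1,\ze_2}-\io_{\ze_2,\ze_1})\ze_1^{m'}\bigr)$ for every $m'$, where the $\io_{\ze_2,\ze_{12}}$ on the left re-expands $\ze_1^{m'}=(\ze_2+\ze_{12})^{m'}$ in nonnegative powers of $\ze_{12}$; (iii) recognize $(\io_{\ze_1,\ze_2}-\io_{\ze_2,\ze_1})\ze_1^{m'}$ as giving $\ze_2^{m'}\de(\ze_1,\ze_2)$ when $m'\ge 0$ and as the appropriate derivative/antiderivative of $\de$ for $m'<0$, so that the right-hand residue is $\ze_2^{m'}g(\ze_2,\ze_2)$ plated against the relevant $\de$-identity; and (iv) conclude $g(\ze_1,\ze_2)$ and $h(\ze_1,\ze_2)$ have equal $\ze_1$-expansions at $\ze_1=\ze_2$, i.e.\ $\tfrac1{k!}\d_{\ze_1}^k g|_{\ze_1=\ze_2}=\tfrac1{k!}\d_{\ze_1}^k h|_{\ze_1=\ze_2}=Y(a_{(N-1-k)}b,\ze_2)c$. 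The main obstacle, and the step deserving the most care, is (ii)--(iii): making rigorous the manipulation of the expansion operators $\io_{\ze_1,\ze_2},\io_{\ze_2,\ze_1},\io_{\ze_2,\ze_{12}}$ and the identification of the residue against $\de$-function derivatives with evaluation-plus-differentiation at $\ze_1=\ze_2$, since this is exactly where the two-sided-Laurent nature of $Y(a,\ze_1)Y(b,\ze_2)c$ (before multiplying by $\ze_{12}^N$) would otherwise obstruct setting $\ze_1=\ze_2$; everything else is formal bookkeeping with binomial coefficients.
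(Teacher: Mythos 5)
Your step (i) is correct and matches the paper's starting point: the $k$-th Taylor coefficient of $\ze_{12}^{N}\,Y(Y(a,\ze_{12})b,\ze_2)c$ at $\ze_{12}=0$ is $Y(a_{(N-1-k)}b,\ze_2)c$, so the proposition reduces to matching Taylor expansions of $g$ and $h$ at $\ze_1=\ze_2$. But steps (ii)--(iii) fail as stated. With $f=\ze_1^{m'}\ze_{12}^{N}$ and $N=N_{ab}$, the Borcherds identity \eqref{twbor} collapses to $0=0$: the left-hand side vanishes because $\io_{\ze_2,\ze_{12}}\ze_1^{m'+p}\ze_{12}^{N}$ contains only powers $\ze_{12}^{j}$ with $j\ge N$ while $Y(Y(a,\ze_{12})b,\ze_2)c$ has pole order at most $N$ in $\ze_{12}$, and the right-hand side is exactly the locality statement. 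This choice of $f$ is precisely how \eqref{locpr} is derived and extracts no information about the products $a_{(N-1-k)}b$. Moreover, your claimed source of the delta function is wrong: $(\io_{\ze_1,\ze_2}-\io_{\ze_2,\ze_1})\ze_1^{m'}=0$ for every $m'$, since $\ze_1^{m'}$ has no pole on the diagonal and both expansion operators act on it identically. Delta functions and their derivatives arise only from \emph{negative} powers of $\ze_{12}$, via $(\io_{\ze_1,\ze_2}-\io_{\ze_2,\ze_1})\ze_{12}^{-1-k}=\frac{(-1)^k}{k!}\d_{\ze_1}^{k}\de(\ze_1,\ze_2)$.

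The missing idea is therefore the correct test function: one must take the $\ze_{12}$-exponent of $f$ equal to $N_{ab}-1-k$ (equivalently, pair $g$ against the $k$-th derivative of the delta function after integrating by parts), which is what the paper does. Doing so also forces the genuinely twisted part of the argument, which your proposal dismisses as bookkeeping: the derivative $\d_{\ze_1}^{k}$ must be distributed over the factor $\ze_1^{p}\ze_2^{-p}\de(\ze_1,\ze_2)$ by Leibniz, producing binomial coefficients $\binom{p}{i}$ with \emph{fractional} $p$, one Borcherds application for each $i=0,\dots,k$, and a final cancellation $\sum_{i=0}^{m}(-1)^{i}\binom{p}{i}\binom{p-i}{m-i}=\de_{m,0}$ that makes all but one term on the iterate side disappear. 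Without the pole of $f$ on the diagonal and without tracking the $\ze_1^{p}$ factor through the differentiation, the argument does not produce \eqref{locpr3}.
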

\begin{proof}
Without loss of generality, we can suppose again that $\si a = e^{-2\pi\ii p} a$ with $p\in\frac1h\Z$;
then \eqref{twlat1} holds.
Using properties of the formal delta function \eqref{fde} and the formal residue \eqref{fres}, we find
that the left-hand side of \eqref{locpr3} is equal to
\begin{align*}
L:&= \frac1{k!} \Res_{\ze_1} \ze_1^{p} \ze_2^{-p} \de(\ze_1,\ze_2) \, \d_{\ze_1}^{k} \bigl( \ze_{12}^{N_{ab}} \, Y(a,\ze_1) Y(b,\ze_2) c \bigr)
\\
&= \frac1{k!} \Res_{\ze_1} (-\d_{\ze_1})^{k} \bigl(\ze_1^{p} \ze_2^{-p} \de(\ze_1,\ze_2) \bigr) \, \ze_{12}^{N_{ab}} \, Y(a,\ze_1) Y(b,\ze_2) c \,.
\end{align*}
By the Leibniz rule and \eqref{fde}, we have
\begin{equation*}
\frac1{k!} (-\d_{\ze_1})^{k} \bigl(\ze_1^{p} \ze_2^{-p} \de(\ze_1,\ze_2) \bigr)
= \sum_{i=0}^k (-1)^{i} \binom{p}{i} \ze_1^{p-i} \ze_2^{-p} \, (\io_{\ze_1,\ze_2}- \io_{\ze_2,\ze_1}) \ze_{12}^{-1-k+i} \,.
\end{equation*}
Then Borcherds identity \eqref{twbor}, combined with locality \eqref{locpr}, gives that
\begin{equation*}
L=\sum_{i=0}^k (-1)^{i} \binom{p}{i} \Res_{\ze_{12}} Y(Y(a,\ze_{12})b,\ze_2)c \; \io_{\ze_2,\ze_{12}} \ze_1^{p-i} \ze_2^{-p} \ze_{12}^{N_{ab}-1-k+i} \,.
\end{equation*}
Writing explicitly the expansion of $\ze_1=\ze_2+\ze_{12}$ as in \eqref{iota}, and using \eqref{fres}, we obtain
\begin{equation*}
L=\sum_{i=0}^k \sum_{j=0}^\infty (-1)^{i} \binom{p}{i} \binom{p-i}{j} \ze_2^{-i-j} \, Y(Y(a_{(N_{ab}-1-k+i+j)} b,\ze_2)c \,.
\end{equation*}
Notice that the sum over $j$ can be truncated at $j=k-i$, because $a_{(n)}b=0$ for $n \geq N_{ab}$.
Setting $m=i+j$, we get
\begin{equation*}
L=\sum_{m=0}^k \sum_{i=0}^m (-1)^{i} \binom{p}{i} \binom{p-i}{m-i} \ze_2^{-m} \, Y(Y(a_{(N_{ab}-1-k+m)} b,\ze_2)c \,.
\end{equation*}
Now observe that
\begin{equation*}
\sum_{i=0}^m (-1)^{i} \binom{p}{i} \binom{p-i}{m-i}
= \binom{p}{m} \sum_{i=0}^m (-1)^{i} \binom{m}{i}
= \de_{m,0} \,,
\end{equation*}
completing the proof.
\end{proof}

\begin{remark}\label{rnprod}
By reversing the above proof, one can show that, conversely, the product identity \eqref{locpr3} and locality \eqref{locpr}  imply the Borcherds identity \eqref{twbor}.
Therefore, they can replace the Borcherds identity in the definition of twisted representation.
\end{remark}

\begin{remark}\label{rnprod2}
The above proof simplifies significantly in the case of an untwisted representation $M$, as then $p=0$.
In the untwisted case, formula \eqref{locpr3} first appeared in \cite{BN} and \cite{BK2} for vertex algebras and generalized
vertex algebras, respectively. 
It provides a rigorous interpretation of the \emph{operator product expansion} in conformal field theory
(cf.\ \cite{Go, DMS}).
\end{remark}

The following easy consequence of \eqref{locpr3} will be useful later.

\begin{corollary}\label{clocpr}
Assume that\/ $a,b\in V$ and\/ $c\in M$ are such that\/ $Y(a,\ze)c$ and\/ $Y(b,\ze)c$ have no negative powers of\/ $\ze$.
Then the same is true for all\/ $Y(a_{(k)}b,\ze)c$, $k\in\Z$.
\end{corollary}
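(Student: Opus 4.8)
\textbf{Proof proposal for Corollary \ref{clocpr}.}

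The plan is to reduce the statement to the product formula \eqref{locpr3} of Proposition \ref{pnprod}, which expresses every mode $(a_{(k)}b)_{(m)}c$ in terms of the compositions $Y(a,\ze_1)Y(b,\ze_2)c$. First I would note that it suffices to treat $k=N-1-j$ for the particular $N=N_{ab}$ appearing in the locality relation \eqref{locpr}, and all $j\geq 0$; the remaining values of $k$ (i.e.\ $k\geq N$) are covered because one may always enlarge $N$, so that every integer $k$ arises as $N'-1-j$ for some admissible $N'$ and some $j\geq0$. Thus the content is: if $Y(a,\ze)c$ and $Y(b,\ze)c$ are regular at $\ze=0$, then so is $Y(a_{(N-1-k)}b,\ze_2)c$ for all $k\geq 0$.

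The key step is to analyze the left-hand side of \eqref{locpr3}, namely $\frac1{k!}\d_{\ze_1}^k\bigl(\ze_{12}^{N}\,Y(a,\ze_1)Y(b,\ze_2)c\bigr)\big|_{\ze_1=\ze_2}$, under the regularity hypothesis. By assumption $Y(b,\ze_2)c\in M[[\ze_2]]$, and applying $Y(a,\ze_1)$ to each coefficient (which lies in $M$) again produces only nonnegative powers of $\ze_1$ by the regularity of $Y(a,\ze)$ on $M$-vectors; hence $Y(a,\ze_1)Y(b,\ze_2)c\in M[[\ze_1,\ze_2]]$, a genuine power series in both variables with no negative powers. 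Multiplying by the polynomial $\ze_{12}^N=(\ze_1-\ze_2)^N$ keeps us in $M[[\ze_1,\ze_2]]$, and differentiating $k$ times in $\ze_1$ does too. Setting $\ze_1=\ze_2$ in a formal power series in $\ze_1,\ze_2$ yields an element of $M[[\ze_2]]$ — there is no obstruction to the substitution precisely because there are no negative powers to worry about. Therefore the left-hand side of \eqref{locpr3} lies in $M[[\ze_2]]$, and by the identity \eqref{locpr3} itself this equals $Y(a_{(N-1-k)}b,\ze_2)c$, which is thus regular at $\ze_2=0$.

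The main (minor) obstacle is bookkeeping the range of $k$: Proposition \ref{pnprod} as stated gives \eqref{locpr3} for a fixed sufficiently large $N$, and one must observe that the set $\{N-1-k : N \text{ valid},\ k\geq 0\}$ exhausts $\Z$, since increasing $N$ by one and $k$ correspondingly shifts the argument. One should also remark that the hypothesis on $c$ need not include $\si$-invariance of $a$ or $b$: the regularity of $Y(a,\ze)c$ already forces the relevant exponents to be nonnegative integers, so the argument goes through verbatim in the twisted setting. I expect this to be a short paragraph in the paper, as all the analytic work has been done in Proposition \ref{pnprod}.
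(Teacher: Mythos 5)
There is a genuine gap at the central step. You claim that $Y(a,\ze_1)Y(b,\ze_2)c\in M[[\ze_1,\ze_2]]$ because ``applying $Y(a,\ze_1)$ to each coefficient (which lies in $M$) again produces only nonnegative powers of $\ze_1$ by the regularity of $Y(a,\ze)$ on $M$-vectors.'' But the hypothesis of the corollary is only that $Y(a,\ze)c$ is regular for the \emph{single} vector $c$, not that $Y(a,\ze)$ acts regularly on all of $M$ (if it did, the statement would be nearly vacuous). The coefficients of $Y(b,\ze_2)c$ are the vectors $b_{(n)}c$, and nothing in the hypotheses controls the negative powers of $\ze_1$ in $Y(a,\ze_1)\bigl(b_{(n)}c\bigr)$. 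So a priori the composition lies only in a space with no negative powers of $\ze_2$ but unbounded-below powers of $\ze_1$, and your substitution $\ze_1=\ze_2$ is not justified as stated.

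The missing ingredient is exactly locality \eqref{locpr}, which is how the paper argues: $\ze_{12}^{N_{ab}}\,Y(a,\ze_1)Y(b,\ze_2)c$ has no negative powers of $\ze_2$ (compute in the order you did), and it equals $\ze_{12}^{N_{ab}}\,Y(b,\ze_2)Y(a,\ze_1)c$, which by the same termwise reasoning applied to the \emph{other} factor has no negative powers of $\ze_1$ (here the hypothesis $Y(a,\ze_1)c\in M[[\ze_1]]$ is finally used). Hence $\ze_{12}^{N_{ab}}\,Y(a,\ze_1)Y(b,\ze_2)c\in M[[\ze_1,\ze_2]]$, and from there your remaining steps — differentiating in $\ze_1$, setting $\ze_1=\ze_2$, invoking \eqref{locpr3}, and enlarging $N$ to reach every $k\in\Z$ — go through and match the paper's (very short) proof.
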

\begin{proof}
By locality \eqref{locpr}, the product $\ze_{12}^{N_{ab}} \, Y(a,\ze_1) Y(b,\ze_2)$ has no negative powers of $\ze_1$ and $\ze_2$. Then use \eqref{locpr3}.
\end{proof}

\subsection{Twisted Heisenberg algebra}\label{twheis}
Let $\lieh$ be a finite-dimensional vector space equipped with a symmetric
nondegenerate bilinear form $(\cdot|\cdot)$, as in Sect.\ \ref{lat}.
Then we have the Heisenberg algebra $\hat\lieh$ and its highest weight representation, 
the Fock space $\F$, which has the structure of a vertex algebra.
Every automorphism $\si$ of $\lieh$ preserving the bilinear form induces automorphisms
of $\hat\lieh$ and $\F$, which will be denoted again as $\si$. As before, assume that $\si$
has a finite order $h$.

The action of $\si$ can be extended to $\lieh[t^{1/h},t^{-1/h}] \oplus\C K$ by letting
\begin{equation*}
\si(a t^m) = \si(a) e^{2\pi\ii m}t^m \,, \quad \si(K)=K \,,
\qquad a\in\lieh \,, \; m\in\frac1h\Z \,.
\end{equation*}
The \emph{$\si$-twisted Heisenberg algebra} 
$\hat\lieh_\si$ is defined as the set of all $\si$-invariant elements
(see e.g.\ \cite{KP,Le1,FLM1}).
In other words, $\hat\lieh_\si$ is spanned over $\C$ by $K$ and
the elements $a_m = at^m$ such that $\si a = e^{-2\pi\ii m} a$.
This is a Lie algebra with bracket 
(cf.\ \eqref{aff1})
\begin{equation*}
[a_m,b_n] = m \delta_{m,-n} (a|b) K \,, \qquad a,b\in\lieh \,, \;\; m,n\in \frac1h\Z \,.
\end{equation*}
Let $\hat\lieh_\si^+$ (respectively, $\hat\lieh_\si^-$) be the subalgebra of 
$\hat\lieh_\si$ spanned by all elements $a_m$ with $m\geq0$ 
(respectively, $m<0$). 
Elements of $\hat\lieh_\si^+$ are called \emph{annihilation operators}, 
while elements of $\hat\lieh_\si^-$ \emph{creation operators}.

The \emph{$\si$-twisted Fock space} is defined as the generalized Verma module
\begin{equation}\label{twheis2}
\F_\si := \Ind^{\hat\lieh_\si}_{\hat\lieh_\si^+ \oplus\C K} \C \cong S(\hat\lieh_\si^-) \,,
\end{equation}
where $\hat\lieh_\si^+$ acts on $\C$ trivially and $K$ acts as the identity operator.
It is an irreducible highest weight representation of $\hat\lieh_\si$. Moreover, $\F_\si$
has the structure of a $\si$-twisted representation of the vertex algebra $\F$
(see \cite{FLM,FFR,DL2}). This structure can be described as follows. 
We let $Y(\vac,\ze)$ be the identity operator and
\begin{equation}\label{twheis3}
Y(a,\ze) = \sum_{n\in p+\Z} a_{n} \, \ze^{-n-1} \,, \qquad
a\in\lieh \,, \;\; \si a = e^{-2\pi\ii p} a \,,
\end{equation}
where $p\in\frac1h\Z$ (cf.\ \eqref{twlat1}).
These satisfy the locality property \eqref{locpr} because
\begin{equation}\label{twheis4}
[Y(a,\ze_1), Y(b,\ze_2)] = (a|b) \, \d_{\ze_2} \bigl( \ze_1^{-p} \ze_2^{p} \de(\ze_1,\ze_2) \bigr) \,.
\end{equation}
The action of $Y$ on other elements of $\F$ is then determined by applying several times the product
formula \eqref{locpr3}.


\subsection{Twisted representations of lattice vertex algebras}\label{twlat}

Now let $V=V_Q$ where $Q$ is a root lattice of type $X_N$ ($X=A,D,E$), and let
$\si$ be a \emph{Coxeter element} of the corresponding Weyl group (see e.g.\ \cite{Bour}).
Such an element is a product of simple reflections    
$\si = r_{\al_1} \dotsm r_{\al_N}$ where $\{\al_1,\dots,\al_N\}$ 
is a basis of simple roots and $r_\al(\be) = \be - (\al|\be) \al$.
All Coxeter elements are conjugate
to each other; their order is the \emph{Coxeter number} $h$.
The element $\si$ is diagonalizable on $\lieh$ 
with eigenvalues $e^{\,2\pi\ii m_k/h}$ where $m_k$ are the exponents of $\lieg$
(see Table \ref{table1} in Sect.\ \ref{ssing} below). In particular, 
$\si$ has no fixed points in $\lieh$. 

\begin{example}\label{ecox}
For type $A_N$, one Coxeter element acts as the cyclic permutation 
$v_1 \mapsto v_2 \mapsto\cdots\mapsto v_{N+1} \mapsto v_1$,
in the notation of \exref{ewalg2}.
For type $D_N$, in the notation of \exref{ewalg3}, one Coxeter element acts as
$v_1 \mapsto v_2 \mapsto\cdots\mapsto v_{N-1} \mapsto -v_1$,
$v_N \mapsto -v_N$.
\end{example}

For $\al,\be\in Q$, we define
\begin{equation*}
\ep(\al,\be) = (-1)^{L(\al,\be)} \,, \qquad
L(\al,\be):=((1-\si)^{-1}\al|\be) \,.
\end{equation*}
The bilinear form $L(\cdot,\cdot)$ is known 
in singularity theory as the \emph{Seifert form}, and
is integer valued (see e.g.\ \cite{AGV,Eb}).
The bilinearity of $L$ implies that $\ep$ is bimutiplicative
and so it is a $2$-cocycle. Using the $\si$-invariance of $(\cdot|\cdot)$,
one easily checks that $|\al|^2 = 2L(\al,\al)$, which implies \eqref{lat2}.
Observe that $\ep$ is $\si$-invariant:
\begin{equation*}
\ep(\si\al,\si\be) = \ep(\al,\be) \,, \qquad \al,\be\in Q \,.
\end{equation*}
Then $\si$ can be lifted to an automorphism of $V_Q$ of order $h$:
\begin{equation*}
\si(a_m)=\si(a)_m \,, \quad \si(e^\al)=e^{\si\al} \,,
\qquad a\in\lieh \,, \; \al\in Q \,.
\end{equation*}

Under the above simplifying assumptions, the $\si$-twisted Fock space $\F_\si$ defined in \eqref{twheis2} 
can be endowed with the structure of a $\si$-twisted
representation of $V_Q$ (see \cite{KP,Le1,D,DL2,BK1}).
We define $Y(a,\ze)$ as before (see \eqref{twheis3}), and we let
\begin{equation}\label{twlat12}
Y(e^\al,\ze) = U_\al \, \ze^{ -|\al|^2/2 } \,
{:} \exp\Biggl( \sum_{  n\in\frac1h\Z\setminus\{0\} } \al_n \frac{\ze^{-n}}{-n} \Biggr) {:} \,,
\end{equation}
where $U_\al$ are certain nonzero complex numbers. As usual, the colons denote \emph{normal ordering}, 
which means that we put all annihilation operators ($\al_n$ for $n>0$)
to the right of all creation operators ($\al_n$ for $n<0$).

The scalars $U_\al$ satisfy
\begin{equation*}
U_\al U_\be = \ep(\al,\be) B_{\al,\be}^{-1} \, U_{\al+\be} \,,
\end{equation*}
where
\begin{equation*}
B_{\al,\be} := 
h^{ -(\al|\be) } \prod_{k=1}^{h-1} \bigl(1 - e^{2\pi\ii k/h} \bigr)^{ (\si^k\al|\be) } \,.
\end{equation*}
We will also need that the product $Y(e^\al,\ze_1) Y(e^{-\al},\ze_2)$ 
on $\F_\si$ is given by (see e.g.\ \cite{BK1})
\begin{equation}\label{twlat11}
(-1)^{|\al|^2(|\al|^2+1)/2} \, \ze_1^{-|\al|^2/2} \ze_2^{-|\al|^2/2} \,
\io_{\ze_1,\ze_2} f_{\al}(\ze_1,\ze_2) \, E_{\al}(\ze_1,\ze_2) \,,
\end{equation}
where
\begin{equation*}
f_{\al}(\ze_1,\ze_2) = B_{\al,\al}
\prod_{k=0}^{h-1} \bigl(\ze_1^{1/h} - e^{2\pi\ii k/h} \ze_2^{1/h}\bigr)^{ -(\si^k\al|\al) } \,,
\end{equation*}
and
\begin{equation*}
E_{\al}(\ze_1,\ze_2) =
{:} \exp\Bigl( \sum_{ n\in\frac1h\Z\setminus\{0\} } \frac{\al_n}{n} (\ze_2^{-n} - \ze_1^{-n}) \Bigr) {:} \,.
\end{equation*}

\subsection{Twisted representations of $\W_{X_N}$}\label{twwalg}

We will now use the product formula \eqref{locpr3} to compute the explicit action on $\F_\si$ 
of the elements of $\W_{X_N}$ given by \eqref{fr1} and \eqref{fr5}. Introduce the \emph{Fa\`a di Bruno polynomials}
(see Sect.\ 6A in \cite{Di}):
\begin{equation*}
S_n(\al,\ze) = \frac1{n!} \bigl( \d_\ze + \al(\ze) \bigr)^n \vac \,, \quad
\text{where} \quad \al(\ze) = Y(\al,\ze) \,, \;\; \al\in\lieh \,.
\end{equation*}
More explicitly,
\begin{equation*}
S_n(\al,\ze) =  {:} S_n \Bigl( \al(\ze), \frac1{2!} \d_\ze \al(\ze), \frac1{3!} \d_\ze^2 \al(\ze), \dots \Bigr){:} \,,
\end{equation*}
where
\begin{equation*}
S_n(x_1,x_2,x_3,\dots) =
\sum_{\substack{i_1+2i_2+3i_3+\cdots=n \\ i_s \in\Z_{\geq0}}} \,
\frac{x_1^{i_1}}{i_1!} \frac{x_2^{i_2}}{i_2!} \frac{x_3^{i_3}}{i_3!} \cdots
\end{equation*}
are the \emph{elementary Schur polynomials}.
When acting on the $\si$-twisted Fock space $\F_\si$, the coefficients of $S_n(\al,\ze)$ in front 
of powers of $\ze$ are represented by differential operators.

\begin{lemma}\label{ltwlat}
For every\/ $d\geq1$ and\/ $\al\in\lieh$ such that\/ $|\al|^2 \in\Z$, we have
\begin{equation*}
Y({e^\al}_{(-d)} e^{-\al}, \ze) = (-1)^{|\al|^2(|\al|^2+1)/2}
\sum_{k=0}^{|\al|^2-1+d} c_k^\al \, \ze^{-k} S_{|\al|^2-1+d-k}(\al, \ze)
\end{equation*}
when acting on\/ $\F_\si$, where\/ $c_k^\al$ is the coefficient in front of\/ $(x-1)^k$ in the Taylor expansion of
\begin{equation*}
B_{\al,\al} \, x^{-|\al|^2/2} \,
\prod_{k=1}^{h-1} \bigl(x^{1/h} - e^{2\pi\ii k/h} \bigr)^{ ((1-\si^k)\al|\al) }
\end{equation*}
around $x=1$. In particular, $c_0^\al = 1$ and\/
$c_k^\al = c_k^{-\al} = c_k^{\si\al}$.
\end{lemma}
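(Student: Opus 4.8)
The plan is to compute $Y({e^\al}_{(-d)} e^{-\al},\ze)$ directly from the operator product expansion formula \eqref{locpr3} applied to $a=e^\al$, $b=e^{-\al}$, using the explicit product $Y(e^\al,\ze_1) Y(e^{-\al},\ze_2)$ given in \eqref{twlat11}. First I would record that ${e^\al}_{(n)} e^{-\al} = 0$ for $n \geq N := |\al|^2 - 1$; this is the locality bound $N_{ab}$, because the pole order of $Y(e^\al,\ze_1)Y(e^{-\al},\ze_2)$ along $\ze_1 = \ze_2$ is exactly $|\al|^2$ (the factor $f_\al(\ze_1,\ze_2)$ has a zero of order $-(\al|\al) = -|\al|^2$ at $\ze_1^{1/h}=\ze_2^{1/h}$, so $\ze_{12}^{N}$ times the product is regular but $\ze_{12}^{N-1}$ times it is not when $d$ ranges appropriately — more precisely one wants $N$ with ${e^\al}_{(N-1-k)}e^{-\al}$ nonzero for $k$ up to $|\al|^2-1+d$, so I will take $N = |\al|^2 - 1 + d$ and use \eqref{locpr3} with this $N$ and $k$ running from $0$ to $N$). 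Setting $\ze_1 = \ze_2 =: \ze$ after multiplying by $\ze_{12}^{N}$ and applying $\frac1{k!}\d_{\ze_1}^k$ yields $Y({e^\al}_{(N-1-k)} e^{-\al},\ze)$.

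Next I would untangle the right-hand side of \eqref{twlat11}. The prefactor $(-1)^{|\al|^2(|\al|^2+1)/2} \ze_1^{-|\al|^2/2}\ze_2^{-|\al|^2/2}$ contributes the sign out front and, at $\ze_1=\ze_2=\ze$, the power $\ze^{-|\al|^2}$. The normally ordered exponential $E_\al(\ze_1,\ze_2)$ at $\ze_1 = \ze_2$ becomes the identity; its derivatives in $\ze_1$ generate the Faà di Bruno polynomials, since $\d_{\ze_1} E_\al(\ze_1,\ze_2)\big|_{\ze_1=\ze_2}$ produces $\al(\ze)$ and iterated derivatives reproduce $(\d_\ze + \al(\ze))^n$ acting on $\vac$ — this is exactly the generating-function characterization $S_n(\al,\ze) = \frac1{n!}(\d_\ze + \al(\ze))^n\vac$. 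The remaining scalar factor $\io_{\ze_1,\ze_2} f_\al(\ze_1,\ze_2)$, after pulling out $\ze_{12}^{-|\al|^2}$ to cancel against the $\ze_{12}^{N}$, is an analytic function of $\ze_1^{1/h}/\ze_2^{1/h}$ near $1$; substituting $x = \ze_1/\ze_2$ and expanding around $x=1$ produces precisely the coefficients $c_k^\al$ defined via the Taylor expansion of $B_{\al,\al}\, x^{-|\al|^2/2} \prod_{k=1}^{h-1}(x^{1/h} - e^{2\pi\ii k/h})^{((1-\si^k)\al|\al)}$ (the $k=0$ factor $(x^{1/h}-1)^{-|\al|^2}$ is what was absorbed; note $(\si^0\al|\al) = |\al|^2$). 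The Leibniz rule distributing $\frac1{k!}\d_{\ze_1}^k$ across the product of the scalar series and $E_\al$ then assembles exactly the stated sum $\sum_k c_k^\al\, \ze^{-k} S_{N-k}(\al,\ze)$, after relabeling so that the $d$ from $N = |\al|^2 - 1 + d$ appears in the Schur index.

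The normalization $c_0^\al = 1$ follows because $B_{\al,\al}\prod_{k=1}^{h-1}(1 - e^{2\pi\ii k/h})^{((1-\si^k)\al|\al)}$ equals $1$: indeed $B_{\al,\al} = h^{-|\al|^2}\prod_{k=1}^{h-1}(1-e^{2\pi\ii k/h})^{(\si^k\al|\al)}$, and combining the exponents $(\si^k\al|\al)$ with $((1-\si^k)\al|\al) = |\al|^2 - (\si^k\al|\al)$ leaves $\prod_{k=1}^{h-1}(1-e^{2\pi\ii k/h})^{|\al|^2} = h^{|\al|^2}$ (using $\prod_{k=1}^{h-1}(1-e^{2\pi\ii k/h}) = h$). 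The invariances $c_k^\al = c_k^{-\al} = c_k^{\si\al}$ come from the $\si$-invariance of $(\cdot|\cdot)$ (which gives $(\si^k\al|\al) = (\si^k(\si\al)|\si\al)$, and a reindexing $k \mapsto h-k$ together with complex conjugation handles $-\al$) and from $B_{\al,\al} = B_{-\al,-\al} = B_{\si\al,\si\al}$. The main obstacle I anticipate is purely bookkeeping: tracking the fractional powers $\ze^{1/h}$ carefully through the substitution $x = \ze_1/\ze_2$ and verifying that the branch choices make $\io_{\ze_1,\ze_2} f_\al$ genuinely expand as a Taylor series in $(x-1)$ rather than in $(x^{1/h}-1)$ — i.e., checking that only integer powers of $x$ survive, which is forced by $|\al|^2 \in \Z$ and the $\si$-invariance of the full product. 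Once the change of variables is set up correctly, the Leibniz-rule combinatorics matching $S_n$ is routine.
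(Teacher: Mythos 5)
Your proposal follows essentially the same route as the paper's proof: apply \eqref{locpr3} to $a=e^\al$, $b=e^{-\al}$, use the explicit product \eqref{twlat11}, cancel the pole of $\io_{\ze_1,\ze_2}f_\al$ against $\ze_{12}^{|\al|^2}$, pass to the variable $x=\ze_1/\ze_2$ to identify the coefficients $c_k^\al$, and distribute the derivative by the Leibniz rule over the scalar factor and $E_\al(\ze_1,\ze_2)$, whose normalized derivatives at $\ze_1=\ze_2$ give the Fa\`a di Bruno polynomials; your verification of $c_0^\al=1$ via $\prod_{k=1}^{h-1}\bigl(1-e^{2\pi\ii k/h}\bigr)=h$ correctly supplies a detail the paper leaves unstated. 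The only blemishes are bookkeeping: the locality bound is $N_{ab}=|\al|^2$ (so ${e^\al}_{(n)}e^{-\al}=0$ for $n\ge|\al|^2$, not $n\ge|\al|^2-1$), and in \eqref{locpr3} one should keep the exponent $N=|\al|^2$ and take the single derivative order $k=|\al|^2-1+d$ --- the index $k$ in the lemma's sum is the Leibniz index, not the exponent on $\ze_{12}$.
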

\begin{proof}
We will apply \eqref{locpr3} for $a=e^\al$, $b=e^{-\al}$ and $c\in\F_\si$.
First, we observe that by \eqref{lat5}
\begin{equation*}
Y(e^\al,\ze) e^{-\al}
= \ep(\al,-\al) \, \ze^{-|\al|^2} \exp\Bigl( \sum_{n<0} \al_n \frac{\ze^{-n}}{-n} \Bigr) \vac \,,
\end{equation*}
so we can take $N_{ab} = |\al|^2$. Then on $\F_\si$
the product $Y(e^\al,\ze_1) Y(e^{-\al},\ze_2)$
is given by \eqref{twlat11}; and
\begin{equation*}
\ze_{12}^{|\al|^2} \io_{\ze_1,\ze_2} f_{\al}(\ze_1,\ze_2)
= B_{\al,\al} \prod_{k=1}^{h-1} \bigl(\ze_1^{1/h} - e^{2\pi\ii k/h} \ze_2^{1/h}\bigr)^{ ((1-\si^k)\al|\al) }
\end{equation*}
is well defined for $\ze_1=\ze_2$. Now the proof follows from the fact that
\begin{equation*}
\frac1{n!} \d_{\ze_1}^n E_{\al}(\ze_1,\ze_2) \big|_{\ze_1=\ze_2} = S_n(\al,\ze_2)
\end{equation*}
(see e.g.\ \cite{Di,K2}).
\end{proof}

\begin{lemma}\label{ltwlat2}
Let\/ $a,b\in\lieh$ be such that\/ $\si a = e^{-2\pi\ii p} a$ with\/ $p\in\frac1h\Z$, $0<p<1$.
Then for every\/ $d\geq1$, we have
\begin{equation*}
Y(a_{(-d)} b, \ze) 
= \frac1{(d-1)!} \, {:} \bigl( \d_\ze^{d-1} a(\ze) \bigr) b(\ze) {:}
- d \, \binom{-p+1}{d+1}\,(a|b)\, \ze^{-d-1}
\end{equation*}
when acting on\/ $\F_\si$, where\/ $a(\ze)=Y(a,\ze)$.
\end{lemma}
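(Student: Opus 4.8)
\emph{Plan.} The idea is to apply the operator product formula \eqref{locpr3} of \prref{pnprod} with $a,b\in\lieh\subset\F$ and $c\in\F_\si$. Inside the vertex algebra $\F$ the Heisenberg relations give $a_{(n)}b=0$ for $n\ge2$ and $a_{(1)}b=(a|b)\vac$, so one may take $N=N_{ab}=2$; moreover, since $\si a=e^{-2\pi\ii p}a$ with $0<p<1$ and $(a|b)\ne0$, $\si$-invariance of the form forces $\si b=e^{2\pi\ii p}b$, so in \eqref{twheis3} the modes $a_n$ and $b_n$ are indexed by $p+\Z$ and $-p+\Z$ respectively, neither set containing $0$. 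Choosing $k=d+1$ in \eqref{locpr3} then yields
\begin{equation*}
Y(a_{(-d)}b,\ze_2)c=\frac1{(d+1)!}\,\d_{\ze_1}^{\,d+1}\Bigl(\ze_{12}^{2}\,Y(a,\ze_1)Y(b,\ze_2)c\Bigr)\Big|_{\ze_1=\ze_2},\qquad\ze_{12}=\ze_1-\ze_2.
\end{equation*}

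The next step is to decompose $Y(a,\ze_1)Y(b,\ze_2)$ on $\F_\si$ into a normally ordered part plus a scalar contraction. Writing each field as creation plus annihilation part and reordering, the contraction equals $[Y(a,\ze_1)^{+},Y(b,\ze_2)^{-}]$; summing the resulting geometric series in $\ze_2/\ze_1$ (equivalently, reading off the singular part of \eqref{twheis4} with $K=1$) gives
\begin{equation*}
Y(a,\ze_1)Y(b,\ze_2)={:}Y(a,\ze_1)Y(b,\ze_2){:}+(a|b)\,\io_{\ze_1,\ze_2}\,\d_{\ze_2}\!\Bigl(\frac{\ze_1^{-p}\ze_2^{\,p}}{\ze_1-\ze_2}\Bigr).
\end{equation*}
The essential point, exactly as in \prref{pnprod}, is that multiplication by $\ze_{12}^{2}$ removes the pole at $\ze_1=\ze_2$: a direct computation gives
\begin{equation*}
\ze_{12}^{2}\,\io_{\ze_1,\ze_2}\,\d_{\ze_2}\!\Bigl(\frac{\ze_1^{-p}\ze_2^{\,p}}{\ze_1-\ze_2}\Bigr)=p\,\ze_1^{1-p}\ze_2^{\,p-1}+(1-p)\,\ze_1^{-p}\ze_2^{\,p},
\end{equation*}
which is holomorphic near the diagonal $\ze_1=\ze_2\ne0$, so one may differentiate and restrict to $\ze_1=\ze_2$ term by term.

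I would then treat the two pieces separately. For the normally ordered part, Leibniz applied to $\d_{\ze_1}^{d+1}(\ze_{12}^{2}X)$ kills every term except the one in which $\ze_{12}^{2}$ is differentiated twice (the rest vanish at $\ze_1=\ze_2$), leaving $\frac1{(d+1)!}\binom{d+1}{2}\cdot2=\frac1{(d-1)!}$ times $\d_{\ze_1}^{\,d-1}{:}Y(a,\ze_1)Y(b,\ze_2){:}\big|_{\ze_1=\ze_2}={:}(\d_\ze^{\,d-1}a(\ze))b(\ze){:}$, i.e.\ the first term of the claimed formula. For the contraction, differentiating $p\ze_1^{1-p}\ze_2^{\,p-1}+(1-p)\ze_1^{-p}\ze_2^{\,p}$ exactly $d+1$ times in $\ze_1$ and setting $\ze_1=\ze_2=\ze$ produces
\begin{equation*}
\frac{(a|b)}{(d+1)!}\,\ze^{-d-1}\Bigl(p\prod_{j=0}^{d}(1-p-j)+(1-p)\prod_{j=0}^{d}(-p-j)\Bigr).
\end{equation*}

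Finally I would invoke an elementary identity: factoring the common product $\prod_{j=1}^{d}(1-p-j)$ out of both terms reduces the bracket to $(1-p)\bigl(p-(p+d)\bigr)\prod_{j=1}^{d}(1-p-j)$, which equals $-d\prod_{j=0}^{d}(1-p-j)$; and since $\prod_{j=0}^{d}(1-p-j)=(d+1)!\binom{-p+1}{d+1}$, the contraction term becomes $-d\binom{-p+1}{d+1}(a|b)\ze^{-d-1}$, as claimed. The computation is routine; the only delicate bookkeeping is keeping track of the fractional powers in the contraction — in particular checking that after multiplication by $\ze_{12}^{2}$ the $\io_{\ze_1,\ze_2}$-expansion of the propagator really becomes a function holomorphic on the diagonal away from $0$, so that \prref{pnprod} applies — together with pinning down the exact sign and normalization of that propagator. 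I expect this to be the main, though mild, obstacle.
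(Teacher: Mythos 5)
Your proposal is correct and follows exactly the route of the paper's own proof: apply the product formula \eqref{locpr3} with $N_{ab}=2$, split $Y(a,\ze_1)Y(b,\ze_2)$ into its normally ordered part plus the contraction \eqref{twheis41}, and evaluate the $(d+1)$-st $\ze_1$-derivative on the diagonal. The paper leaves the final computation as "straightforward"; your Leibniz-rule bookkeeping for the normally ordered term and the product identity reducing the contraction to $-d\binom{-p+1}{d+1}(a|b)\ze^{-d-1}$ are both correct and supply exactly the omitted details.
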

\begin{proof}
We will apply \eqref{locpr3} with $N_{ab}=2$.
It follows from \eqref{twheis4}, \eqref{twheis3} and \eqref{fde} that
\begin{equation}\label{twheis41}
a(\ze_1) b(\ze_2) = {:} a(\ze_1) b(\ze_2) {:} + (a|b) \, \d_{\ze_2} \io_{\ze_1,\ze_2} \bigl( \ze_1^{-p} \ze_2^{p} \ze_{12}^{-1} \bigr) \,.
\end{equation}
The rest of the proof is straightforward, using \eqref{locpr3}.
\end{proof}

\section{Singularities: Root systems and Frobenius structures}\label{s2}

The marvelous interrelations between singularities and root systems were uncovered in the works of Klein, Du Val, Brieskorn, Looijenga, Arnold, Slodowy, Saito and others \cite{Br, Lo1, A1, Sl1, S2}. We will review only the material needed for the rest of the paper, referring to \cite{AGV,Eb,He} for more details. Our main goal is to introduce the Frobenius structure on the space of miniversal deformations of a germ of a holomorphic function with an isolated critical point. We also introduce the period integrals, which are an important ingredient in our construction. 

\subsection{Milnor fibration}\label{mfib}

Let $f\colon(\C^{2l+1},0)\rightarrow (\C,0)$ be the germ of a holomorphic function with an isolated critical point of multiplicity $N$. Denote by 
\begin{equation*}
H = \C[[x_0,\ldots,x_{2l}]]/(\d_{x_0}f,\ldots,\d_{x_{2l}}f)
\end{equation*}
the \emph{local algebra} of the critical point; then $\dim H=N$. 

\begin{definition}\label{dmvdef}
A \emph{miniversal deformation} of $f$ is a germ of a holomorphic function $F\colon(\C^N\times \C^{2l+1},0)\to (\C,0)$ satisfying the following two properties:
\begin{enumerate}
\item[(1)]
$F$ is a deformation of $f$, i.e., $F(0,x)=f(x)$.
\item[(2)]
The partial derivatives $\d F/\d t^i$ $(1\leq i\leq N)$ project to a basis in the local algebra 
$$
\O_{\C^N,0}[[x_0,\dots,x_{2l}]]/\langle \d_{x_0}F,\dots,\d_{x_{2l}}F\rangle.
$$
\end{enumerate}
Here we denote by $t=(t^1,\dots,t^N)$ and $x=(x_0,\dots,x_{2l})$ the standard coordinates on $\C^N$ and $\C^{2l+1}$ respectively, and $\O_{\C^N,0}$ is the algebra of germs at $0$ of holomorphic functions on $\C^N.$
\end{definition}

We fix a representative of the holomorphic germ $F$, which we denote again by $F$, with a domain $X$ constructed as follows. Let 
\begin{equation*}
B_\rho^{2l+1}\subset \C^{2l+1} \,, \qquad 
B=B_\eta^N\subset \C^N \,, \qquad 
B_\delta^1\subset \C 
\end{equation*}
be balls with centers at $0$ and radii $\rho,\eta$, and $\delta$, respectively.
We set 
\begin{equation*}
S=B\times B_\delta^1 \subset\C^N\times\C \,, \quad 
X=(B\times B_\rho^{2l+1})\cap \phi^{-1}(S) \subset\C^N\times\C^{2l+1} \,,
\end{equation*}
where
\ben
\phi\colon B\times B_\rho^{2l+1}\to B\times\C \,,
\qquad (t,x)\mapsto (t,F(t,x)) \,.
\een 
This map induces a map $\phi\colon X\to S$ and we denote by $X_s$ or $X_{t,\la}$ the fiber 
\ben
X_s = X_{t,\la} = \{(t,x)\in X \,|\, F(t,x)=\la\} \,,\qquad s=(t,\la)\in S.
\een  
The number $\rho$ is chosen so small that for all $r$, $0<r\leq \rho$, the fiber $X_{0,0}$ intersects transversely the boundary $\d B_r^{2l+1}$ of the ball with radius $r$. Then we choose the numbers $\eta$ and $\delta$ small enough so that for all $s\in S$ the fiber $X_s$ intersects transversely the boundary $\d B_\rho^{2l+1}.$ Finally, we can assume without loss of generality that the critical values of $F$ are contained in a disk $B_{\delta_0}^1$ with radius $\delta_0<1<\delta$.
\begin{figure}[htbp]
\begin{center}
\scalebox{0.55}{\includegraphics{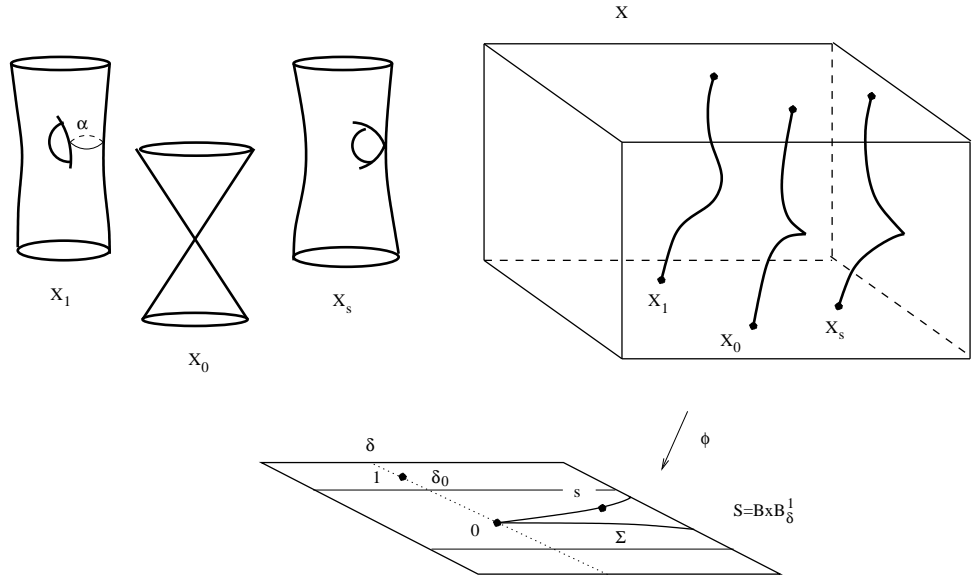}}
\caption{Milnor fibration}
\label{fig:mf}
\end{center}
\end{figure}

Let $\Si$ be the {\em discriminant} of the map $\phi$, i.e., the set
of all points $s\in S$ such that the fiber $X_s$ is singular. Put 
\begin{equation*}
S'=S\setminus{\Si} \subset\C^N\times\C \,, \qquad 
X'=\phi^{-1}(S') \subset X \subset\C^N\times\C^{2l+1} \,.
\end{equation*}
Then the map $\phi\colon X'\to S'$ is a smooth fibration, called 
the \emph{Milnor fibration}. In particular, all smooth fibers are diffeomorphic to $X_{0,1}$.
The middle homology group of the smooth fiber, equipped with the bilinear form
$(\cdot|\cdot)$ equal to $(-1)^l$ times the intersection form, 
is known as the \emph{Milnor lattice} $Q=H_{2l}(X_{0,1};\Z)$. 
    
For a generic point $s\in\Si$, the singularity of the fiber $X_s$
is Morse. Thus, every choice of a path from $(0,1)$ to $s$ avoiding $\Si$
leads to a group homomorphism $Q \to H_{2l}(X_s;\Z)$. The kernel of this
homomorphism is a free $\Z$-module of rank $1$. A generator  
$\al\in Q$ of the kernel is called a \emph{vanishing cycle} if 
$(\al|\al) = 2$. We denote by $R$ the set of all vanishing cycles 
for all possible choices of $s\in\Si$ and paths from $(0,1)$ to $s$.


The fundamental group $\pi_1(S')$ of the base of the Milnor fibration acts on 
the homology of the smooth fiber $Q=H_{2l}(X_{0,1};\Z)$ preserving the
intersection form. The image of $\pi_1(S')$ in $\Aut Q$
will be called the \emph{monodromy group} and denoted by $W$.  
The Picard--Lefschetz
formula tells us that the monodromy associated to a small loop around 
a generic point $s\in\Si$ is given by the reflection $r_\al$, where $\al\in R$ is a cycle vanishing over $s$ and
$r_\al(\be) = \be - (\al|\be) \al$.
Furthermore, $W$ is generated by the reflections $r_\al$ $(\al\in R)$.
The so-called \emph{classical monodromy} $\si\in W$ is the monodromy transformation corresponding to a big loop around~$\Si.$

\subsection{Simple singularities}\label{ssing}

The \emph{simple} singularities are labeled by $ADE$ Dynkin diagrams.
In this case, we can take $f(x)$ to be a polynomial in three variables, as in 
Table \ref{table1}. For further reference, we have also listed the
\emph{Coxeter number} $h$ and the \emph{exponents} $m_1\leq\dots\leq m_N$ of the
corresponding Lie algebra (see e.g.\ \cite{Bour}).


\begin{table}[htb]
\caption{Simple singularities}
\begin{tabular}{cllc}\label{table1}
\textbf{Type} & $\boldsymbol{f}\boldsymbol{(}\boldsymbol{x}\boldsymbol{)}$ & 
\textbf{Exponents} & $\boldsymbol{h}$ \\
\vspace{3pt}
$A_N$ & $x_0^{N+1} \!+\! x_1^2 \!+\! x_2^2$ 
& $1,2,\dots,N$  & $N\!+\!1$ \\ 
\vspace{3pt}
$D_N$ & $x_0^{N-1} \!+\! x_0 x_1^2 \!+\! x_2^2$ 
& $1,3,\dots,2N\!-\!3,N\!-\!1$ & $2N\!-\!2$ \\ 
\vspace{3pt}
$E_6$ & $x_0^4 \!+\! x_1^3 \!+\! x_2^2$ 
& $1,4,5,7,8,11$ & $12$ \\ 
\vspace{3pt}
$E_7$ & $x_0^3 x_1 \!+\! x_1^3 \!+\! x_2^2$ 
& $1,5,7,9,11,13,17$ & $18$ \\ 
\vspace{3pt}
$E_8$ & $x_0^5 \!+\! x_1^3 \!+\! x_2^2$
& $1,7,11,13,17,19,23,29$ & $30$ 
\end{tabular}
\end{table}


For a simple singularity of type $X_N$ $(X=A,D,E)$, the Milnor lattice $Q$
is isomorphic to a root lattice of type $X_N$, while the set $R$ of vanishing
cycles is a root system of type $X_N$. The monodromy group $W$ coincides
with the Weyl group, and the classical monodromy is a 
Coxeter element $\si \in W$.


\subsection{Frobenius structure}\label{sec3_1}
Let $\T_B$ be the sheaf of holomorphic vector fields on $B$. Condition (2) in \deref{dmvdef} implies that the map 
$$
\d/\d{t^i}\mapsto \d F/\d t^i \mod \langle \d_{x_0} F,\dots,\d_{x_{2l}}F\rangle \qquad (1\leq i\leq N)
$$ 
induces an isomorphism between $\T_B$ and $p_*\O_C$, where $p\colon X\to B$ is the natural projection $(t,x)\mapsto t$ and 
\ben
\O_C:=\O_X/\langle \d_{x_0} F,\dots,\d_{x_{2l}}F\rangle
\een
is the structure sheaf of the critical set of $F$. In particular, since $\O_C$ is an algebra, the sheaf $\T_B$ is equipped with an associative commutative multiplication, which will be denoted by $\bullet.$ It induces a product $\bullet_t$ on the tangent space of every point $t\in B$. The class of the function $F$ in $\O_C$ defines a vector field $E\in \T_B$, called the {\em Euler vector field}. 

Given a holomorphic volume form $\omega$ on $(\C^{2l+1},0)$, possibly
depending on $t\in B$, we can equip $p_*\O_C$ with the so-called
\emph{residue pairing}:
\ben
(\psi_1(t,x),\psi_2(t,x)) :=
\Big(\frac{1}{2\pi i}\Big)^{2l+1}\int_{\Gamma_\epsilon} 
\frac{\psi_1(t,x)\ \psi_2(t,x)}
{\d_{x_0} F \cdots \d_{x_{2l}} F } \,\omega\,,
\een
where the integration cycle $\Gamma_\epsilon$ is supported on 
$|\d_{x_0} F|= \cdots =|\d_{x_{2l}}F|=\epsilon$. 
In particular, since $\T_B\cong p_*\O_C$, we get that the residue
pairing induces a non-degenerate complex bilinear form $(\ ,\ )$ on $\T_B$.  

\comment{
The orthogonality conditions mentioned above can be interpreted also in the following way (see \cite{ST}). The sheaf $\H_F$ is equipped with a Gauss--Manin connection:
\ben
\nabla^{\rm G.M.}_{\d/\d t^a} \int e^{F(t,x)/z}g(t,x,z)dx = \int e^{F/z}\Big(z^{-1}\frac{\d F}{\d t^a}g +\frac{\d g}{\d t^a}\Big) dx
\een
and 
\ben
\nabla^{\rm G.M.}_{\d/\d z} \int e^{F(t,x)/z}g(t,x,z)dx = \int e^{F/z}\Big(-z^{-2}Fg +\frac{\d g}{\d z}\Big) dx.
\een}

For $t\in B$ and $z\in\C^*$, let
$\B_{t,z}$ be a semi-infinite cycle in $\C^{2l+1}$ of the following type:
\ben
\B_{t,z}\in \lim_{\rho \to \infty} \, H_{2l+1}(\C^{2l+1},\{ 
  \mathrm{Re}\, z^{-1} F(t,x)<-\rho\} ;\C) \cong \C^N \,.
\een
The above homology groups form a vector bundle on $B\times \C^*$ equipped
naturally with a Gauss--Manin connection, and $\B=\B_{t,z}$ may be viewed as a
flat section. According to K.\ Saito's theory of {\em primitive forms} \cite{SaK,MS}
there exists a form $\omega$, called primitive, such that the oscillatory
integrals ($d^B$ is the de Rham differential on $B$)
\ben
J_\B(t,z):= (2\pi z)^{-l-\frac{1}{2}}\ (zd^B)\, 
\int_{\B_{t,z}} e^{z^{-1}F(t,x)}\omega \in \T_B^*
\een
are horizontal sections for the following connection: 
\beqa\label{frob_eq3}
\nabla_{\d/\d t^i} & = &  \nabla^{\rm L.C.}_{\d/\d t^i} - z^{-1}(\d_{t^i} \bullet_t),
\qquad 1\leq i\leq N \\
\label{frob_eq4}
\nabla_{\d/\d z} & = &  \d_z - z^{-1} \theta + z^{-2} E\bullet_t \,.
\eeqa
Here $\nabla^{\rm L.C.}$ is the Levi--Civita connection associated with the residue pairing and
\ben
\theta:=\nabla^{\rm L.C.}E-\Big(1-\frac{d}{2}\Big){\rm Id},
\een 
where $d$ is some complex number. 

In particular, this means that the residue pairing and the multiplication $\bullet$ form a {\em Frobenius structure} on $B$
of conformal dimension $d$ with identity $1$ and Euler vector field $E$. For the definition of a Frobenius structure we refer to \cite{Du,Ma} (see also Sect.\ \ref{sfrobgiv}). 

\begin{example}\label{eprimsim}
For simple singularities, the standard volume form
\ben
\omega = dx_0\wedge dx_1\wedge \cdots \wedge dx_{2l} 
\een
is the only primitive form, up to a constant factor.
\end{example}

Assume that a primitive form $\omega$ is chosen. Note that the flatness of the Gauss--Manin connection implies that the residue pairing is flat. Denote by $(\tau^1,\dots, \tau^N)$ a coordinate system on $B$ that is flat with respect to the residue metric, and write $\partial_i$ for the vector field $\partial/\partial{\tau^i}$. We can further modify the flat coordinate system so that the Euler field is the sum of a constant and linear fields: 
\ben
E=\sum_{i=1}^N (1-d_i) \tau^i \partial_{i} + \sum_{i=1}^N \rho_i \partial_i \,.
\een
The constant part represents the class of $f$ in $H$, and the spectrum
of degrees $d_1,\dots, d_N$ ranges from $0$ to $d.$ 
Note that in the flat coordinates $\tau^i$ the operator $\theta$ (called sometimes the \emph{Hodge grading operator}) assumes diagonal form:
\ben
\theta(\d_i) = \Bigl(\frac{d}{2}-d_i\Bigr) \d_i \,, \qquad\quad 1\leq i\leq N \,.
\een

\subsection{Period integrals}\label{sec3_3}
Given a middle homology class $\al\in H_{2l}(X_{0,1};\C)$, we denote by $\al_{t,\gl}$ its parallel transport to 
the Milnor fiber $X_{t,\gl}$. Let $d^{-1}\omega$ be any $2l$-form whose 
differential is $\omega$. We can integrate $d^{-1}\omega$ over $\al_{t,\gl}$
and obtain multivalued functions of $\gl$ and $t$ 
ramified around the discriminant in $S$ (over which 
the Milnor fibers become singular).

\begin{definition}\label{dperiods}
To $\al\in\lieh=H_{2l}(X_{0,1};\C)$, we associate the {\em 
period vectors} $I^{(k)}_\al(t,\gl)\in H\ (k\in \Z)$ defined by
\beq\label{periods} 
(I^{(k)}_\al(t,\gl), \partial_i):= 
-(2\pi)^{-l} \d_\gl^{l+k} \d_i \int_{\al_{t,\gl}} d^{-1}\omega \,,
\qquad 1\leq i\leq N \,.
\eeq 
\end{definition}

Note that this definition is consistent with the operation of stabilization of
singularities. Namely, adding the squares of two new variables does not change
the right-hand side, since it is offset by an extra differentiation 
$(2\pi)^{-1}\partial_{\gl}$. In particular, this defines the period
vector for a negative value of $k\geq -l$ with $l$ as large as one wishes.
Note that, by definition, we have 
\ben
\d_\la I^{(k)}_\al(t,\gl) = I^{(k+1)}_\al(t,\gl) \,, \qquad \al\in\lieh \,, \;\; k\in\Z\,.
\een
The following lemma is due to A.\ Givental \cite{G1}.

\begin{lemma}\label{lem:periods}
The period vectors \eqref{periods} satisfy the differential 
equations
\begin{align}\label{periods:de1}
\d_i I^{(k)} &= -\d_i\bullet_t(\d_\gl I^{(k)})\,, \qquad\quad 1\leq i\leq N \,, 
\\ \label{periods:de2}
(\gl-E\bullet_t) \d_{\gl}I^{(k)} &= \Bigl(\theta-k-\frac12\Bigr) I^{(k)} \,.
\end{align}
\end{lemma}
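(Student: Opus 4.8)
The plan is to relate the period vectors to the oscillatory integral vectors $J_\B(t,z)$ by Laplace transform, and then to read off \eqref{periods:de1}--\eqref{periods:de2} from the fact --- supplied by K.~Saito's theory of primitive forms, recalled above --- that $J_\B$ is a horizontal section of the connection \eqref{frob_eq3}--\eqref{frob_eq4}. I work throughout in flat coordinates $\tau^i$, so that $\nabla^{\rm L.C.}_{\d_i}=\d_i$ on $H$-valued functions and $\theta$ is the diagonal Hodge operator. The first step is the \emph{Laplace bridge}: foliating the semi-infinite cycle $\B_{t,z}$ by the level sets $\{F(t,x)=\gl\}$ and combining the Gelfand--Leray/coarea formula with $\omega=d(d^{-1}\omega)$, one rewrites the oscillatory integral as $\int_{\B_{t,z}}e^{z^{-1}F}\,\omega=\int e^{\gl/z}\,\d_\gl\bigl(\int_{\al_{t,\gl}}d^{-1}\omega\bigr)\,d\gl$, where $\al\in\lieh$ is the cycle bounded by $\B_{t,z}$. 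Differentiating in $\tau^i$, inserting the normalizing factor $(2\pi z)^{-l-1/2}z$, and comparing with \eqref{periods} yields
\[
(J_\B(t,z),\d_i)=-(2\pi)^{-1/2}\,z^{1/2-l}\int e^{\gl/z}\,(I^{(1-l)}_\al(t,\gl),\d_i)\,d\gl\,,
\]
so that $J_\B$ is, up to the explicit factor $-(2\pi)^{-1/2}z^{1/2-l}$, the Laplace transform of $I^{(1-l)}_\al$. Since the period vectors are $\C$-linear in $\al$ and the classes bounded by the various $\B_{t,z}$ span $\lieh$, it suffices to treat these $\al$.

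The second step is to transform the horizontality equations. Integration by parts in $\gl$ (the boundary terms vanish because $e^{\gl/z}\to0$ along the relevant ray for $\Re z^{-1}>0$) converts multiplication by $z^{-1}$ into $-\d_\gl$ under $\int e^{\gl/z}(\cdot)\,d\gl$. Feeding this into the $\tau^i$-equation $\d_i J_\B=z^{-1}(\d_i\bullet_t)J_\B$ gives $\d_i I^{(1-l)}_\al=-(\d_i\bullet_t)\,\d_\gl I^{(1-l)}_\al$, i.e.\ \eqref{periods:de1} for $k=1-l$. For the $z$-equation, rewrite \eqref{frob_eq4} as $(z^2\d_z-z\theta+E\bullet_t)J_\B=0$; substituting $J_\B=-(2\pi)^{-1/2}z^{1/2-l}\int e^{\gl/z}I^{(1-l)}_\al\,d\gl$, the derivative $\d_z$ contributes one term from the prefactor and one from $\d_z e^{\gl/z}=-\gl z^{-2}e^{\gl/z}$, and the leftover power of $z$ is absorbed using $z\int e^{\gl/z}\d_\gl h\,d\gl=-\int e^{\gl/z}h\,d\gl$ together with $\d_\gl I^{(-l)}_\al=I^{(1-l)}_\al$. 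What survives is $\int e^{\gl/z}\bigl[(\theta-\tfrac12+l)I^{(-l)}_\al-(\gl-E\bullet_t)\d_\gl I^{(-l)}_\al\bigr]\,d\gl=0$, and injectivity of the Laplace transform forces the bracket to vanish --- this is \eqref{periods:de2} for $k=-l$. The shift $-\tfrac12$ is produced exactly by the normalization $(2\pi z)^{-l-1/2}$ together with the dimension count in the Laplace bridge.

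The third step is to propagate in $k$. Applying $\d_\gl$ to \eqref{periods:de1} and \eqref{periods:de2} and using $\d_\gl I^{(k)}_\al=I^{(k+1)}_\al$ together with the $\gl$-independence of the operators $\d_i\bullet_t$, $E\bullet_t$ and $\theta$, one checks inductively that both equations hold for every $k\ge 1-l$, with the constant $-k-\tfrac12$ in \eqref{periods:de2} decreasing by one at each step. Since the whole construction is compatible with stabilization (the remark following \deref{dperiods}), letting $l$ grow pushes the lower bound $1-l$ to $-\infty$, so the equations hold for all $k\in\Z$.

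I expect the main obstacle to be the Laplace bridge and the integration-by-parts bookkeeping it feeds: one must choose the foliation/contour of $\B_{t,z}$ so that the Gelfand--Leray formula applies, check that every boundary term in the repeated integration by parts genuinely vanishes, and keep track of all powers of $z$ and $2\pi$ accurately enough to land on the precise constant $-k-\tfrac12$ in \eqref{periods:de2}. A more elementary alternative, avoiding oscillatory integrals altogether, is to differentiate $\int_{\al_{t,\gl}}d^{-1}\omega$ directly via the Gauss--Manin connection of the Milnor fibration, using the identifications $\d_i\leftrightarrow\d_i F$ in $\O_C$ and $\bullet\leftrightarrow$ multiplication in $\O_C$; this route is more hands-on but requires reestablishing the relevant Picard--Fuchs relations.
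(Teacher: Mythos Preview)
The paper does not actually prove this lemma: it simply states ``The following lemma is due to A.~Givental \cite{G1}'' and moves on. Your Laplace-transform argument --- expressing $J_\B$ as the Laplace transform of the period vector via Gelfand--Leray foliation of the thimble, then transporting the horizontality equations \eqref{frob_eq3}--\eqref{frob_eq4} through integration by parts --- is precisely Givental's original proof from \cite{G1,G3}, so you have reconstructed the intended argument. The bookkeeping you flag (boundary terms, the exact power of $z$ producing $-k-\tfrac12$, and spanning $\lieh$ by boundaries of Lefschetz thimbles) is all standard and goes through as you outline; the stabilization remark to push $k$ down to $-\infty$ is also the right device.
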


Using equation \eqref{periods:de2},
we analytically extend the period vectors to all 
$|\gl|>\delta$. It follows from \eqref{periods:de1} that the period vectors 
have the symmetry
\beq\label{tinv}
I^{(k)}_\al(t,\gl)\ = \ I^{(k)}_\al(t-\gl\one,0) \,,
\eeq  
where $t \mapsto t-\gl\one $ denotes the time-$\gl$ translation 
in the direction of the flat vector field $\one $ obtained from $1\in H$. 
(The latter represents identity elements for all the products
$\bullet_t$.)

An important consequence of Lemma \ref{lem:periods} is the
following formula due to K. Saito \cite{SaK}.
\begin{corollary}\label{int-res}
For all\/ $\al,\be\in H_{2l}(X_{0,1};\C)$, we have
\ben
(\al|\beta)=
- \bigl(I^{(0)}_\al(t,0),E\bullet_t I^{(0)}_\beta(t,0)\bigr) \,.
\een
\end{corollary}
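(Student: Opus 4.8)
The plan is to derive this identity from the two differential equations in Lemma~\ref{lem:periods}, evaluated at $\gl=0$. First I would use \eqref{periods:de2} with $k=0$ at $\gl=0$, which gives $-E\bullet_t\,\d_\gl I^{(0)}_\be(t,0) = (\theta-\tfrac12)I^{(0)}_\be(t,0)$, so that $E\bullet_t I^{(0)}_\be(t,0) = -E\bullet_t\bigl(\d_\gl I^{(0)}_\be\bigr)(t,0) + $ (a correction), but more cleanly one rewrites the right-hand side of the claimed formula using \eqref{periods:de1}: taking $\d_i = \partial/\partial\tau^i$, equation \eqref{periods:de1} at $\gl=0$ reads $\d_i I^{(0)}_\be(t,0) = -\d_i\bullet_t\bigl(\d_\gl I^{(0)}_\be\bigr)(t,0)$. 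The idea is that the pairing $(I^{(0)}_\al, E\bullet_t I^{(0)}_\be)$ should be recognized as (minus) the flat, $t$-independent intersection pairing on the Milnor lattice, so the main task is to show that the right-hand side is independent of $t$ and then to compute its value at one convenient point.

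To show $t$-independence, I would differentiate $g_{\al\be}(t):=\bigl(I^{(0)}_\al(t,0),E\bullet_t I^{(0)}_\be(t,0)\bigr)$ with respect to $\tau^i$, using that the residue pairing is flat (so $\d_i$ commutes with $(\cdot,\cdot)$ up to Levi--Civita terms which vanish in flat coordinates) and that $\bullet_t$ is a Frobenius multiplication, i.e. $(u\bullet_t v, w) = (v, u\bullet_t w)$. Combining this Frobenius property with \eqref{periods:de1} for both $\al$ and $\be$, and with the identity $\d_i(E\bullet_t) = \d_i\bullet_t + E\bullet_t\d_i\bullet_t\cdot(\text{something})$ coming from the flatness of the connection \eqref{frob_eq3}--\eqref{frob_eq4} (equivalently, from the associativity/potentiality of the Frobenius structure and the fact that $\nabla^{\mathrm{L.C.}}_i E = \theta + (1-d/2)\id$ acts in a controlled way), all the $\tau^i$-derivatives should cancel in pairs. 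Concretely, $\d_i g_{\al\be} = (\d_i I^{(0)}_\al, E\bullet_t I^{(0)}_\be) + (I^{(0)}_\al, \d_i(E\bullet_t)I^{(0)}_\be) + (I^{(0)}_\al, E\bullet_t\,\d_i I^{(0)}_\be)$; substituting \eqref{periods:de1} into the first and last terms turns them into expressions involving $\d_i\bullet_t$ and $\d_\gl I^{(0)}$, and one checks these are killed against the middle term after using \eqref{periods:de2} at $\gl=0$ to express $E\bullet_t\,\d_\gl I^{(0)}$ in terms of $(\theta-\tfrac12)I^{(0)}$ and using that $\theta$ is skew-adjoint for the residue pairing up to the shift by $d/2$.

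Having established that $g_{\al\be}(t)$ is constant in $t$, I would then identify the constant. The cleanest route is to take the limit as $t\to 0$ toward a generic semisimple point, or better, to use \eqref{tinv}, which says $I^{(0)}_\al(t,0) = I^{(0)}_\al(t-\la\one,0)|_{\la=0}$ and more generally relates the values at $\gl$ to shifts in $t$; combined with the asymptotic behavior of the period vectors as $\gl\to\infty$ (controlled by \eqref{periods:de2}, whose leading term gives $I^{(k)}_\al \sim \gl^{\theta-k-1/2}(\text{const})$), one extracts that the leading coefficient of the period vector encodes the class $\al$ itself under the isomorphism $\lieh\cong H$. Pairing two such leading terms via $E\bullet_t$, which near $\gl=\infty$ behaves like multiplication by $\gl$ times the identity, reproduces exactly the intersection form $(\al|\be)$ up to sign, with the sign $-1$ matching the stated normalization. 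Alternatively, since this is attributed to K.~Saito, I would cite \cite{SaK} for the evaluation of the constant and present only the $t$-independence argument in detail.

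I expect the main obstacle to be the bookkeeping in the $t$-independence step: one must correctly use the flatness of the Gauss--Manin/Dubrovin connection \eqref{frob_eq3}--\eqref{frob_eq4} to control $\d_i(E\bullet_t)$ — this encodes both the associativity of $\bullet_t$ and the homogeneity relation $[E,\d_i\bullet_t\,] = (d_i-1)\,\d_i\bullet_t$ plus the action of $\theta$ — and then see the cancellation against the terms produced by \eqref{periods:de1} and \eqref{periods:de2}. The self-adjointness properties of $\bullet_t$ and the skew/self-adjointness of $\theta$ with respect to the residue pairing are what make everything collapse, but assembling them in the right order is where the care is needed; everything else is either a direct citation of Lemma~\ref{lem:periods} or a routine asymptotic computation at $\gl=\infty$.
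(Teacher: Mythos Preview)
Your $t$-independence step is the same as the paper's: the paper also appeals to Lemma~\ref{lem:periods} for this, though without detail, so your sketch (Frobenius self-adjointness of $\bullet_t$, skew-adjointness of $\theta$, and the identities \eqref{periods:de1}--\eqref{periods:de2}) is the right toolkit even if the bookkeeping you describe is a bit muddled.

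The genuine gap is in the second step, identifying the constant. The paper's argument is structurally different and much cleaner: once $g_{\al\be}(t)$ is known to be independent of $t$, one observes that it is \emph{monodromy invariant} (the periods $I^{(0)}_\al$ transform covariantly under parallel transport of $\al$). For a simple singularity the monodromy group is the Weyl group acting irreducibly on $\lieh$, so the space of invariant bilinear forms is one-dimensional and $g_{\al\be}=c\,(\al|\be)$ for some scalar $c$. The value $c=-1$ is then fixed by letting $t$ approach a generic point of the discriminant, where everything reduces to an explicit $A_1$ computation. You never invoke monodromy invariance, and that is the missing idea.

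Your proposed route via asymptotics is problematic on two counts. First, $(t,\gl)=(0,0)$ lies on the discriminant, so the periods are singular there and ``evaluate at $t=0$'' is not available. Second, and more seriously, your assertion that the leading coefficient of $I^{(0)}_\al$ ``encodes the class $\al$'' in a way that recovers the intersection form is circular: the identification of the period map with the intersection pairing---the relation $(v^i|v^j)=(\d_i,\d_j)$ stated after \eqref{per-map}---is derived in the paper \emph{from} this corollary, not independently of it. Without the monodromy-invariance shortcut you would need a genuinely independent computation relating the topological intersection form to the residue pairing, which you have not supplied.
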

\begin{proof}
According to \leref{lem:periods}, the right-hand side is independent of $t$. It is also monodromy invariant, therefore, up to
a constant it must coincide with the intersection pairing. It remains
only to verify that the proportionality coefficients is  $1$, which
may be reduced to the case of an $A_1$-singularity by letting $t$ approach a
generic point on the discriminant. 
\end{proof}
 
Now we will compute explicitly the period integrals $I^{(k)}_\al(0,\la)$ in
the case of simple singularities. In this case, we may choose $l=1$
and we can assign uniquely a degree $\chi_i$ to $x_i$, so that the
polynomial $f(x)$ is \emph{weighted homogeneous} of degree $1$ (see Table \ref{table1} in Sect.\ \ref{ssing}). Furthermore, we
can fix the flat coordinates in such a way that if we set $\deg\tau^i=1-d_i$ then $F(t,x)$ is still weighted
homogeneous of degree $1$. In particular, the polynomials $\d_i F(t,x)$ are
weighted homogeneous of degree $d_i$. 

Then the integral \eqref{periods}
for $k=t=0$ assumes the form
\beq\label{per-0}
(I^{(0)}_\al(0,\la),\d_i) = \la^{s_i} \langle v^i,\al\rangle \,,
\eeq
where 
\ben
s_i = d_i- \frac{d}{2}-\frac{1}{2} \,, \qquad
d=\sum_{i=0}^{2l} (1-2\chi_i) = 1-\frac{2}{h}\, ,
\een
and $v^i$ are some constant sections of the middle cohomology
bundle (see \eqref{per-map} below).

By definition, the analytic continuation in $\la$ along a
counter-clockwise loop around $\la=0$ is equivalent to the parallel
transport of $\al$ along that loop, i.e., 
\ben
e^{2\pi \sqrt{-1}\, s_i} \la^{s_i} \langle v^i,\al\rangle = \la^{s_i}
\langle v^i,\si(\al)\rangle \,.
\een
We obtain that $v^i$ are eigenvectors of the classical monodromy
$\si$, which according to \coref{int-res} satisfy the following orthogonality relations:
\ben
\si(v^i) = e^{-2\pi \sqrt{-1}\, s_i} v^i,\qquad
(v^i|v^j)=(\d_i,\d_j) \,.
\een
In particular, $s_i=-m_i/h$, where $m_i$ are the
exponents of the corresponding simple Lie algebra. 
In other words, the period mapping 
\beq\label{per-map}
\d_i \mapsto v^i:=\left. (2\pi)^{-l} \Big( \d_\la^l \int
  \d_i F\frac{\omega}{dF} \Big)\right|_{(t,\la)=(0,1)} 
\eeq
is an isomorphism between the space of flat vector fields and $\lieh^*,$ i.e.,
\ben
\Gamma(B,\T_B)^\nabla \ \cong\ H^{2l}(X_{0,1};\C) \,,
\een
which identifies the residue pairing with the intersection pairing. 
\comment{
In addition, 
the following formulas hold:
\beq\label{pk}
I^{(k)}_\al(0,\la) = \sum_{i=1}^N s_a(s_a-1)\cdots (s_a-k+1) \la^{s_a-k}
\langle A_a,\al\rangle d\tau^a,\quad k\geq 0,
\eeq
and
\beq\label{p-k-1}
I^{(-k-1)}_\al(0,\la) = \sum_{a=1}^N
\frac{\la^{s_a+k+1}}{(s_a+1)\cdots(s_a+k+1)} 
\langle A_a,\al\rangle d\tau^a,\quad k\geq 0.
\eeq}

\subsection{Stationary phase asymptotic and calibration}\label{sec3_4}

Let $u_i(t)$ ($1\leq i\leq N$) be the critical values of $F(t,\cdot)$. For a generic $t$, they form a local coordinate system on $B$ in which the Frobenius multiplication and the residue pairing are diagonal. Namely,
\ben
\d/\d u_i \, \bullet_t\, \d/\d u_j = \delta_{ij}\d/\d u_j \,,\quad 
\(\d/\d u_i,\d/\d u_j \) = \delta_{ij}/\Delta_i \,,
\een
where $\Delta_i$ is the Hessian of $F$ with respect to the volume form $\omega$ at the critical point corresponding to the critical value $u_i.$ 
Therefore, the Frobenius structure is \emph{semi-simple}.

We denote by $\Psi_t$ the following linear isomorphism
\ben
\Psi_t\colon \C^N\rightarrow T_t B \,,\qquad 
e_i\mapsto \sqrt{\Delta_i}\d/\d u_i \,,
\een
where $\{e_1,\dots,e_N\}$ is the standard basis for $\C^N$.
Let $U_t$ be the diagonal matrix with entries $u_1(t),\ldots, u_N(t)$. 

According to Givental \cite{G1}, the system of differential equations (cf.\ \eqref{frob_eq3}, \eqref{frob_eq4})
\begin{align}\label{de_1}
z\d_i J(t,z) &= \d_i\bullet_t J(t,z) \,,\qquad\quad 1\leq i\leq N \,,
\\
\label{de_2}
z\d_z J(t,z) &= (\theta-z^{-1} E\bullet_t) J(t,z)
\end{align}
has a unique formal asymptotic solution of the form $\Psi_t R_t(z) e^{U_t/z}$, 
where 
\ben
R_t(z)=1+R_1(t)z+R_2(t)z^2+\cdots \,,
\een
and $R_k(t)$ are linear operators on $\C^N$ uniquely determined from the differential 
equations \eqref{de_1} and \eqref{de_2}.  

Introduce the formal series
\beq\label{falpha}
\f_\al(t,\gl,z) = \sum_{k\in \Z} I^{(k)}_\al(t,\gl) \, (-z)^k \,, \qquad
\al\in\lieh \,.
\eeq
Then equations \eqref{periods:de1}, \eqref{periods:de2} imply:
\begin{align}\label{periods:de3}
\d_i \f_\al(t,\gl,z) &= -\d_i\bullet_t \bigl(\d_\gl \f_\al(t,\gl,z)\bigr)\,, \qquad\quad 1\leq i\leq N \,, 
\\ \label{periods:de4}
\gl \d_\gl \f_\al(t,\gl,z) &= \Bigl(-z\d_z-z^{-1} E\bullet_t+\theta-\frac12\Bigr) \f_\al(t,\gl,z) \,.
\end{align}
The following result is due to Givental \cite{G3}.
\begin{proposition}\label{vanishing_a1}
Let\/ $t\in B$ be generic and\/ $\be$ be a vanishing cycle vanishing over the point\/ $(t,u_i(t))\in \Si$. Then for all\/ $\la$ near\/ $u_i(t)$, we have
\ben
\f_{\be}(t,\gl,z) = \Psi_t R_t(z) \sum_{k\in \Z} (-z\d_\gl)^k \, \frac{2e_i}{\sqrt{2(\gl-u_i(t))}} \,.
\een
\end{proposition}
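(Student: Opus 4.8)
\textbf{Proof proposal for Proposition \ref{vanishing_a1}.}

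The plan is to verify that the right-hand side satisfies the same defining properties as $\f_\be(t,\gl,z)$, namely the differential equations \eqref{periods:de3}, \eqref{periods:de4} together with an appropriate initial/singularity condition near $\gl=u_i(t)$, and then invoke uniqueness. First I would reduce to a single critical point: since $t$ is generic, the critical values $u_1(t),\dots,u_N(t)$ are distinct, so near $(t,u_i(t))$ the only vanishing cycle up to sign is $\be$, and the period vector $\f_\be$ is controlled entirely by the local behaviour of $F$ at the corresponding Morse critical point. In the canonical coordinates the Frobenius structure is diagonal, so one expects the period vector of $\be$ to be supported (to leading order) in the $e_i$-direction, with a square-root singularity $\sim 1/\sqrt{\gl-u_i(t)}$ coming from the standard $A_1$ computation — this is precisely what Corollary \ref{int-res} forces once one uses $(\be|\be)=2$.

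Concretely, I would set $\g_\al := \Psi_t^{-1} R_t(z)^{-1} \f_\al(t,\gl,z)$ and rewrite \eqref{periods:de3}--\eqref{periods:de4} in terms of $\g_\al$. The key point is that $\Psi_t R_t(z) e^{U_t/z}$ is the asymptotic fundamental solution of \eqref{de_1}--\eqref{de_2}, so conjugating the connection operators $\d_i + z^{-1}(\d_i\bullet_t)$ and $z\d_z-\theta+z^{-1}E\bullet_t$ by $\Psi_t R_t(z)$ turns them into the trivial flat connection $\d_i$ and a diagonal operator $z\d_z + z^{-1}U_t$ (up to the shift by $\theta-1/2$ and $\gl$-dependence already present). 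Tracking the $\gl$-equation \eqref{periods:de4} through this conjugation, the equation for $\g_\be$ becomes componentwise: for the $j$-th component one gets an ODE in $\gl$ forcing it to be annihilated unless $j=i$, and for $j=i$ the equation becomes $\gl\d_\gl$ acting on a series in $(\gl-u_i(t))$ that is solved exactly by $\sum_k (-z\d_\gl)^k \bigl(2/\sqrt{2(\gl-u_i(t))}\bigr)$; the identity $(\gl-u_i)\d_\gl\bigl((\gl-u_i)^{-1/2}\bigr)=-\tfrac12(\gl-u_i)^{-1/2}$ is what makes the ansatz work, after absorbing the $U_t$ shift into a translation $\gl\mapsto\gl-u_i(t)$. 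Equation \eqref{periods:de3} is then automatically consistent since $\d_i\bullet_t$ is diagonal in canonical coordinates and $\d_\gl$ commutes through.

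I expect the main obstacle to be pinning down the normalization constant — i.e.\ showing the coefficient is exactly $2/\sqrt{2}$ and that there is no admixture of the other eigendirections $e_j$ ($j\neq i$) at any order in $z$. This is where one must use the hypothesis that $\be$ is a \emph{vanishing} cycle (so $(\be|\be)=2$) rather than an arbitrary cycle, and where Corollary \ref{int-res} enters: evaluating $(\be|\be) = -(I^{(0)}_\be(t,0), E\bullet_t I^{(0)}_\be(t,0))$ in canonical coordinates and comparing with the leading term of the claimed formula fixes the constant. The vanishing of the off-diagonal components is forced by monodromy: circling $\gl$ around $u_i(t)$ acts on $\be$ by the Picard--Lefschetz reflection $r_\be$, which sends $\be\mapsto-\be$ and fixes the orthogonal complement, so $\f_\be$ must change sign under this loop — and only the $e_i$-component of $\Psi_t^{-1}R_t(z)^{-1}\f_\be$, with its half-integer power of $(\gl-u_i(t))$, has that monodromy, while any $e_j$-component would have to be single-valued and regular, hence (being a solution of a first-order ODE with the relevant behaviour) zero. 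Putting these together with the uniqueness of the formal asymptotic solution from Givental's setup completes the argument.
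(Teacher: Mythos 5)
First, note that the paper does not prove this proposition: it is quoted from Givental \cite{G3} without proof, so there is no internal argument to compare against. Your overall strategy --- show both sides solve the same system \eqref{periods:de1}--\eqref{periods:de2}, match the singularity type at $\la=u_i(t)$, and invoke uniqueness, with the normalization fixed by $(\be|\be)=2$ via Corollary \ref{int-res} or the explicit $A_1$ computation --- is indeed the right one and is essentially how the result is established.

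There is, however, a genuine gap in the step where you kill the components along $e_j$, $j\neq i$. You argue that the monodromy around $\la=u_i(t)$ acts by $\be\mapsto -\be$, hence $\f_\be\mapsto-\f_\be$, and that ``any $e_j$-component would have to be single-valued and regular, hence zero.'' That last implication is not automatic: an anti-invariant function need only carry half-integer powers of $\la-u_i(t)$, so a term like $(\la-u_i(t))^{3/2}e_j$ is perfectly compatible with the monodromy, and indeed the actual period $I^{(k)}_\be(t,\la)$, viewed as a vector in $H$, \emph{does} have components in all directions beyond leading order --- the content of the proposition is precisely that these are produced by $R_t(z)$ acting on the pure $e_i$-mode. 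What is needed instead is the local analysis of the Fuchsian system \eqref{periods:de2} at $\la=u_i(t)$: the residue of the connection there is the rank-one operator $P_i(\theta-k-\tfrac12)$, where $P_i$ is the spectral projection of $E\bullet_t$ onto the $u_i$-eigenspace; since $\leftexp{T}{\theta}=-\theta$, its unique nonzero eigenvalue is $-k-\tfrac12$ with eigendirection $\Psi_te_i$, all other exponents are $0$, and there are no resonances because $-k-\tfrac12\notin\Z$. Hence the anti-invariant part of the local solution space is one-dimensional, of the form $(\la-u_i(t))^{-k-1/2}\times(\text{holomorphic})$ with leading coefficient along $\Psi_te_i$, and only then does uniqueness up to the scalar you compute follow. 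Two further points deserve attention: (i) you should actually verify that the right-hand side is a formal solution of \eqref{periods:de2}, which amounts to translating the recursion defining $R_t(z)$ from \eqref{de_1}--\eqref{de_2} through the dictionary $z\leftrightarrow\d_\la^{-1}$, $e^{U_t/z}\leftrightarrow$ translation by $u_i(t)$ (you allude to this but do not carry it out); and (ii) since $R_t(z)$ is in general divergent in $z$, the coefficient of each power of $z$ on the right-hand side is an infinite sum, whose convergence near $\la=u_i(t)$ must be addressed --- the paper does exactly this later, in the proof of Lemma \ref{vanishing-period}, by observing that the tails $\sum_{k\geq n}R_k(-\d_\la)^{-k}$ applied to the $A_1$-period increase the order of vanishing at the branch point.
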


\medskip

One can think of the connection operator in \eqref{frob_eq3} as an isomonodromic family 
of connection operators $\nabla_t$ over $\C\setminus\{0\}$, parameterized by $t \in B$.
We introduce a gauge transformation $\mathcal{S}_t(z)$ of the form 
\beq\label{gauge}
\mathcal{S}_t(z)=1+S_1(t) z^{-1}+S_2(t)z^{-2}+\cdots \,,\qquad
S_k(t)\in\End H
\eeq
that satisfies the differential equations \eqref{de_1} and 
conjugates $\nabla_t$ and $\nabla_0$:
$$ 
\nabla_{t} = \mathcal{S}_{t}\ \nabla_0\ \mathcal{S}_{t}^{-1} \,, \qquad
\nabla_0 = \d_z-z^{-1}\theta + z^{-2}\rho \,,
$$
where $\rho=E\bullet_0$ is the constant part of the Euler vector field.
In general, such a gauge transformation is not uniquely determined. However, it is not hard to see 
that if we impose the initial condition $\mathcal{S}_t(z)|_{t=0} = 1$ then such a gauge 
transformation exists and is unique. 

\begin{proposition}\label{translation}
We have\/ $\mathcal{S}_t(z) \, \f_\al(0,\gl,z) = \f_\al(t,\gl,z)$ for\/ $\gl$ in a neighborhood of\/ $\infty.$
\end{proposition}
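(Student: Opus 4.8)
The plan is to show that both sides of the claimed identity are horizontal sections of the same connection over a neighborhood of $\gl=\infty$, and then match them by comparing their asymptotic expansions. First I would verify that $\mathcal{S}_t(z)\,\f_\al(0,\gl,z)$ satisfies the same differential equations in $t$ and $z$ that characterize $\f_\al(t,\gl,z)$, namely \eqref{periods:de3} and \eqref{periods:de4}, together with the $\gl$-derivative relation $\d_\la I^{(k)}_\al = I^{(k+1)}_\al$ which is equivalent to $\d_\gl$ acting as multiplication by $-z^{-1}$ on $\f_\al$. The key point is that $\mathcal{S}_t(z)$ was defined to satisfy \eqref{de_1} and to conjugate $\nabla_t$ to $\nabla_0$; since $\f_\al(0,\gl,z)$ is, by \leref{lem:periods} (recast via \eqref{periods:de3}--\eqref{periods:de4} with $t=0$), a horizontal section for $\nabla_0$ in the appropriate sense, applying $\mathcal{S}_t(z)$ produces a horizontal section for $\nabla_t$. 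Concretely, one checks that $\d_i\bigl(\mathcal{S}_t\,\f_\al(0,\gl,z)\bigr) = -\d_i\bullet_t\bigl(\d_\gl(\mathcal{S}_t\,\f_\al(0,\gl,z))\bigr)$ using the equation \eqref{de_1} satisfied by $\mathcal{S}_t$, and similarly for the $z\d_z$-equation using the conjugation property and the definition of $\nabla_0$.

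Next I would invoke uniqueness: the system \eqref{periods:de3}, \eqref{periods:de4} together with $\d_\gl\f_\al = -z^{-1}\cdot$(shift) is a first-order holonomic system, so a solution is determined by its value at a single point, say at $t=0$. Since $\mathcal{S}_t(z)|_{t=0}=1$ by the imposed initial condition, we get $\mathcal{S}_0(z)\,\f_\al(0,\gl,z) = \f_\al(0,\gl,z)$, which matches $\f_\al(t,\gl,z)$ at $t=0$ trivially. Therefore the two sides agree for all $t\in B$ (in the neighborhood of $\gl=\infty$ where everything is defined). One should be slightly careful here that $\f_\al$ is a formal series in $z$ whose coefficients $I^{(k)}_\al(t,\gl)$ are genuine (multivalued) analytic functions, while $\mathcal{S}_t(z)$ is a formal series in $z^{-1}$; the product makes sense as a two-sided formal Laurent series in $z$, and the differential equations are to be read coefficient-wise in powers of $z$, so the uniqueness argument applies order by order.

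The main obstacle I anticipate is establishing the $z\d_z$-compatibility cleanly. The gauge transformation $\mathcal{S}_t(z)$ intertwines $\nabla_t$ and $\nabla_0$ as connections on the $z$-line, i.e., $\nabla_t = \mathcal{S}_t\,\nabla_0\,\mathcal{S}_t^{-1}$ with $\nabla_0 = \d_z - z^{-1}\theta + z^{-2}\rho$, but $\f_\al(0,\gl,z)$ satisfies \eqref{periods:de4}, which involves $\gl\d_\gl$ and mixes $z\d_z$ with $E\bullet_t$ and $\theta$. I would need to reconcile the $\gl$-dependence: observe that for $t=0$, $E\bullet_0 = \rho$, so \eqref{periods:de4} at $t=0$ reads $\gl\d_\gl\f_\al(0,\gl,z) = (-z\d_z - z^{-1}\rho + \theta - \tfrac12)\f_\al(0,\gl,z)$, which is exactly the statement that $\f_\al(0,\gl,z)$ is annihilated by a combination of $\nabla_0$ (acting in $z$) and the scaling vector field in $\gl$. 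Conjugating by $\mathcal{S}_t$ and using that $\mathcal{S}_t$ depends only on $t$ (not $\gl$) and satisfies the $z$-equation \eqref{de_2}-type relation built into its definition, one transports this to the analogous statement at general $t$, which is \eqref{periods:de4}. Making this transport precise — tracking how $\theta$, $E\bullet_t$, and $\d_\gl$ interact with $\mathcal{S}_t$ — is the computational heart of the argument, but it is a routine gauge-transformation bookkeeping once the setup is in place. Finally I would note that the identity is asserted only near $\gl=\infty$ precisely because that is where the period vectors were analytically continued via \eqref{periods:de2} and where the formal solution $\mathcal{S}_t$ is controlled; no further analytic continuation is claimed.
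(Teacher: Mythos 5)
Your proposal is correct and is essentially the paper's own argument: both sides satisfy the first-order system in $t$ given by \eqref{de_1}/\eqref{periods:de3} (using $\d_\gl\f_\al=-z^{-1}\f_\al$) and agree at $t=0$ since $\mathcal{S}_t(z)|_{t=0}=1$, so they coincide by uniqueness. The extended discussion of the $z\d_z$-equation \eqref{periods:de4} is unnecessary, since the $t$-flow together with the initial condition already determines the solution.
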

\begin{proof}
This follows from the differential equations \eqref{de_1}, \eqref{periods:de3}. 
\end{proof}

\section{Symplectic loop space formalism}\label{sec4}

The goal of this section is to introduce Givental's quantization
formalism (see \cite{G2}) and use it to define the higher genus potentials in
singularity theory. We continue using the notation of Sect.\ \ref{s2}.

\subsection{Symplectic structure and quantization}\label{ssympl}

As in Sect.\ \ref{s2}, let $H$ be the space of flat vector fields on $B$. 
The space $\H:=H(\!(z^{-1})\!)$ of formal Laurent series in $z^{-1}$ with
coefficients in $H$ is equipped with the following \emph{symplectic form}: 
\ben
\Omega(\phi_1,\phi_2):=\Res_z \(\phi_1(-z),\phi_2(z)\) \,,
\qquad \phi_1,\phi_2\in\H \,,
\een 
where, as before, $(,)$ denotes the residue pairing on $H$
and the formal residue $\Res_z$ gives the coefficient in front of $z^{-1}$.

Let $\{\d_i\}_{i=1}^{ N}$ and $\{\d^i\}_{i=1}^N$ be dual bases of $H$ with respect to the residue pairing.
Then
\ben
\Om(\d^i(-z)^{-k-1}, \d_j z^l) = \de_{ij} \de_{kl} \,.
\een
Hence, a Darboux coordinate system is provided by the linear functions $q_k^i$, $p_{k,i}$ on $\H$ given by:
\ben
q_k^i = \Om(\d^i(-z)^{-k-1}, \cdot) \,, \qquad
p_{k,i} = \Om(\cdot, \d_i z^k) \,.
\een
In other words,
\ben
\phi(z) = \sum_{k=0}^\infty \sum_{i=1}^N q_k^i(\phi) \d_i z^k +  
\sum_{k=0}^\infty \sum_{i=1}^N p_{k,i}(\phi) \d^i(-z)^{-k-1} \,,
\qquad \phi\in\H \,.
\een  
The first of the above sums will be denoted $\phi(z)_+$ and the second $\phi(z)_-$.

The \emph{quantization} of linear functions on $\H$ is given by the rules:
\ben
\widehat q_k^i = \hbar^{-1/2} q_k^i \,, \qquad
\widehat p_{k,i} = \hbar^{1/2} \frac{\d}{\d q_k^i} \,.
\een
Here and further, $\hbar$ is a formal variable. We will denote by $\C_\hbar$ the field
$\C(\!(\hbar^{1/2})\!)$.

Every $\phi(z)\in\H$ gives rise to the linear function $\Om(\phi,\cdot)$ on $\H$,
so we can define the quantization $\widehat\phi$. Explicitly,
\beq\label{phihat}
\widehat\phi = -\hbar^{1/2} \sum_{k=0}^\infty \sum_{i=1}^N q_k^i(\phi) \frac{\d}{\d q_k^i}
+ \hbar^{-1/2} \sum_{k=0}^\infty \sum_{i=1}^N p_{k,i}(\phi) q_k^i \,.
\eeq
The above formula makes sense also for $\phi(z)\in H[[z,z^{-1}]]$ if we interpret $\widehat\phi$
as a formal differential operator in the variables $q_k^i$ with coefficients in $\C_\hbar$.

\begin{lemma}\label{lphihat}
For all\/ $\phi_1,\phi_2\in\H$, we have\/ $[\widehat\phi_1, \widehat\phi_2] = \Om(\phi_1,\phi_2)$.
\end{lemma}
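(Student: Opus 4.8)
The plan is to use that both sides of the asserted identity depend bilinearly on the pair $(\phi_1,\phi_2)$, and then to reduce the verification to the Darboux basis. For the right-hand side this is immediate, since $\Om$ is bilinear by definition. For the left-hand side, formula \eqref{phihat} shows that $\phi\mapsto\widehat\phi$ is linear: the coefficients $q_k^i(\phi)$ and $p_{k,i}(\phi)$ are the values at $\phi$ of the linear coordinate functions $q_k^i$ and $p_{k,i}$, and the commutator is bilinear in its two arguments. It therefore suffices to check $[\widehat\phi_1,\widehat\phi_2]=\Om(\phi_1,\phi_2)$ when each of $\phi_1,\phi_2$ is one of the basis vectors $\d_i z^k$ or $\d^i(-z)^{-k-1}$ $(1\le i\le N,\ k\ge0)$.

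On these basis vectors the quantized operators are transparent. Since $q_k^i=\Om(\d^i(-z)^{-k-1},\cdot)$ while $\Om(\d_i z^k,\cdot)=-p_{k,i}$, the quantization rules give $\widehat{\d^i(-z)^{-k-1}}=\hbar^{-1/2}q_k^i$ (acting as multiplication) and $\widehat{\d_i z^k}=-\hbar^{1/2}\,\frac{\d}{\d q_k^i}$. I would then treat the three cases. When both arguments are of the form $\d_i z^k$, the quantizations are $\hbar^{1/2}$ times commuting partial derivatives, so the bracket is zero, while $\Om(\d_i z^k,\d_j z^l)=0$ by a direct residue computation (the integrand has no negative power of $z$); the case of two vectors $\d^i(-z)^{-k-1}$ is the same, with multiplications replacing derivatives. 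In the remaining, mixed case,
\begin{equation*}
\bigl[\widehat{\d^i(-z)^{-k-1}},\widehat{\d_j z^l}\bigr]
=\Bigl[\hbar^{-1/2}q_k^i,\,-\hbar^{1/2}\frac{\d}{\d q_l^j}\Bigr]
=-\Bigl[q_k^i,\frac{\d}{\d q_l^j}\Bigr]
=\de_{ij}\de_{kl},
\end{equation*}
which matches $\Om(\d^i(-z)^{-k-1},\d_j z^l)=\de_{ij}\de_{kl}$; the reversed ordering follows by antisymmetry of both sides.

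The only point requiring care — and the mild obstacle here — is that $\H=H(\!(z^{-1})\!)$ allows infinitely many negative powers of $z$, so a general $\widehat\phi$ has infinitely many multiplication terms, even though its derivative part is a finite sum (because $\phi(z)_+$ is a polynomial in $z$). I would observe that this is harmless: expanding $[\widehat\phi_1,\widehat\phi_2]$ by bilinearity, the only nonzero contributions pair a derivative term of one factor with a multiplication term of the other, and since each derivative part is a finite sum, the total is a finite sum of the mixed-case evaluations above and equals the (finite) expression for $\Om(\phi_1,\phi_2)$. Hence the identity holds on all of $\H$, and the lemma follows.
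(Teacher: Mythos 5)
Your proof is correct and follows essentially the same route as the paper: reduce by bilinearity to the Darboux basis vectors $\d^i(-z)^{-k-1}$, $\d_i z^k$, where the identity holds by the definition of the quantization rules. Your extra care with the explicit commutator computations and with the finiteness of the mixed terms (only the polynomial part $\phi(z)_+$ contributes derivative terms) is a harmless elaboration of what the paper dismisses as "true by definition."
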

\begin{proof}
It is enough to check this for the basis vectors $\d^i(-z)^{-k-1}$, $\d_i z^k$, in which case it is true by definition.
\end{proof}

\subsection{Quantization of quadratic Hamiltonians}\label{squant}

It is known that both series $\mathcal{S}_t(z)$ and
\ben
\mathcal{R}_t(z):=\Psi_t R_t(z)\Psi_t^{-1}
\een
(see Sect.\ \ref{sec3_4}) are symplectic transformations. Moreover, they both have the form $e^{A(z)},$ where $A(z)$ is an infinitesimal symplectic transformation. 

A linear operator $A(z)$ on $\H:=H(\!(z^{-1})\!)$ is infinitesimal symplectic if and only if the map $\phi\in \H \mapsto A\phi\in \H$ is a Hamiltonian vector field with a Hamiltonian given by the quadratic function $h_A(\phi) = \frac{1}{2}\Omega(A\phi,\phi)$. 
By definition, the \emph{quantization} of $e^{A(z)}$ is given by the differential operator $e^{\widehat{h}_A},$ where the quadratic Hamiltonians are quantized according to the following rules:
\ben
(p_{k,i}p_{l,j})\sphat = \hbar\frac{\d^2}{\d q_k^i\d q_l^j} \,,\quad 
(p_{k,i}q_l^j)\sphat = (q_l^jp_{k,i})\sphat = q_l^j\frac{\d}{\d q_k^i} \,,\quad
(q_k^iq_l^j)\sphat = \frac1{\hbar} q_k^iq_l^j \, .
\een

\subsection{Total descendant potential}\label{giv}
Let us make the following convention. Given a vector 
\ben
\q(z) = \sum_{k=0}^\infty q_k z^k \in H[z] \,, \qquad
q_k=\sum_{i=1}^N q_k^i\d_i \in H \,,
\een
its coefficients give rise to a vector sequence  $q_0,q_1,\dots$.
By definition, a {\em formal  function} on $H[z]$,
defined in the formal neighborhood of a given point $c(z)\in H[z]$, is a formal power
series in $q_0-c_0, q_1-c_1,\dots$. Note that every operator acting on
$H[z]$ continuously in the appropriate formal sense induces an
operator acting on formal functions.

\begin{example}\label{ewktf}
The \emph{Witten--Kontsevich tau-function} is the following generating series:
\beq\label{D:pt}
\D_{\rm pt}(\hbar;Q(z))=\exp\Big( \sum_{g,n}\frac{1}{n!}\hbar^{g-1}\int_{\overline{\M}_{g,n}}\prod_{i=1}^n (Q(\psi_i)+\psi_i)\Big),
\eeq
where $Q_0,Q_1,\ldots$ are formal variables, and $\psi_i$ ($1\leq
i\leq n$) are the first Chern classes of the cotangent line bundles on
$\overline{\M}_{g,n}$ (see \cite{W1,Ko1}).
It is interpreted as a formal
function of $Q(z)=\sum_{k=0}^\infty Q_k z^k\in \C[z]$, defined in the
formal neighborhood of $-z$. In other words, $\D_{\rm pt}$ is a formal power series
in $Q_0,Q_1+1,Q_2,Q_3,\dots$ with coefficients in $\C(\!(\hbar)\!)$.
\end{example}

Let $t\in B$ be a {\em semi-simple} point, so that the critical values
$u_i(t)$ ($1\leq i\leq N$) of $F(t,\cdot)$ form a coordinate
system. Recall also the flat coordinates
$\tau=(\tau^1(t),\dots,\tau^N(t))$ of $t$. 
We now introduce the main object of our study.

\begin{definition}\label{dtdp}
The \emph{total descendant potential} of a singularity of type $X_N$ is the
following formal function on $H[z]$ defined in the formal neighborhood
of $\tau-\one z$:
\beq\label{DXN}
\D_{X_N}(\q(z))= e^{F^{(1)}(t)}\, \widehat{\S}^{-1}_t\,\widehat{\Psi}_t\,\widehat{R}_t\,e^{\widehat{U_t/z}}\,\prod_{i=1}^N \D_{\rm pt}(\hbar\,\Delta_i; \sqrt{\Delta_i}Q^i(z)) \,,
\eeq
where the factor $F^{(1)}(t)$ (called the genus-$1$ potential) is
chosen so that it makes the formula independent of $t$. 
\end{definition}

As we discussed in Sect.\ \ref{sfrobgiv}, equation \eqref{DXN} is Givental's formula, which we now take as a definition.
Let us examine more carefully the quantized action of the operators in this formula.

\subsection{The action of the asymptotical operator}\label{asop}
The operator ${\widehat{U_t/z}}$ is known to annihilate the
Witten--Kontsevich tau-function. Therefore, 
$e^{\widehat{U_t/z}}$ is redundant and it can be dropped from the
formula. By definition, $\widehat{\Psi}_t$ is the following change of variables: 
\ben
\q(z) = \Psi_t\sum_{i=1}^NQ^i(z)e_i \,,\quad \text{i.e.,}\quad \sqrt{\Delta_i}Q_k^i =\sum_{j=1}^N (\d_j u_i)\, q_k^j \,.
\een
Put $\widehat{\mathcal{R}}_t=\widehat\Psi_t \widehat R_t\widehat\Psi_t^{-1}$ and 
\ben
\leftexp{i}{\q}(z) = \sum_{k=0}^\infty \sum_{j=1}^N q_k^j(\d_j u_i) z^k \,.
\een
Then the total descendant potential assumes the form:   
\beq\label{dxna}
\D_{X_N}(\q(z))=e^{F^{(1)}(t)} \widehat{\mathcal{S}}^{-1}_t \A_t(\q(z)) \,,
\eeq
where  
\beq\label{ancestor}
\A_t(\q(z)) = \widehat{\mathcal{R}}_t\ \prod_{i=1}^N\, \D_{\rm pt}(\hbar\Delta_i;\leftexp{i}{\q}(z)) \in \C_\hbar[[q_0,q_1+\one,q_2\dots]]
\eeq
is the so-called {\em total ancestor potential} of the singularity. As before, $\C_\hbar:=\C(\!(\hbar^{1/2})\!)$.

The action of the operator $\widehat{\mathcal{R}}_t$ on formal functions, whenever it makes sense, is given as follows.
\begin{lemma}[Givental \cite{G2}]\label{lrfock}
We have
\ben
\widehat{\mathcal{R}}_t\,F(\q) = \left.\Big(e^{\frac{\hbar}{2}V\d^2}F(\q)\Big)\right|_{\q\mapsto \mathcal{R}^{-1}_t\q} \,,
\een
where\/ $V\d^2$ is the quadratic differential operator 
\ben
V\d^2 = \sum_{k,l=0}^\infty \sum_{i,j=1}^N (\d^i,V_{kl}\d^j) \, \frac{\d^2}{ \d q_k^i \d q_l^j}
\een
whose coefficients\/ $V_{kl}$ are given by 
\ben
\sum_{k,l=0}^\infty (-1)^{k+l} V_{kl}(t) z^k w^l = \frac{\leftexp{T}{R}_t(z)R_t(w)-1}{z+w} 
\een
and\/ $\leftexp{T}{R}_t(z)$ denotes the transpose of\/ $R_t(z)$.
\end{lemma}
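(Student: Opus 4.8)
The content of the lemma is that the operator $\widehat{\mathcal{R}}_t=e^{\widehat{h}_A}$ attached to the symplectic transformation $\mathcal{R}_t(z)=\Psi_t R_t(z)\Psi_t^{-1}$ by the recipe of Sect.~\ref{squant} --- where $A(z)=\log\mathcal{R}_t(z)=\sum_{k\geq1}A_k(t)z^k$ is a well-defined power series (since $\mathcal{R}_t(z)=1+O(z)$) and $h_A(\phi)=\tfrac12\Om(A\phi,\phi)$ --- can be rewritten as a second-order differential operator followed by a linear change of variables. The plan is to produce this factorization explicitly. First I would write $h_A$ in the Darboux coordinates $q_k^i,p_{k,i}$ of Sect.~\ref{ssympl}. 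The essential point is that $A(z)$, having only \emph{positive} powers of $z$, maps the position subspace $H[z]$ into itself; equivalently, $h_A$ has no ``$q^iq^j$''-terms (the coefficient of such a term would be $\Res_z\,(A(-z)\q(-z),\q(z))$, the formal residue of a series with only non-negative powers of $z$, hence zero). Consequently the quantization splits as $\widehat{h}_A=L_1+\hbar L_2$, where $L_1$ is a linear vector field in the $q$-variables with no constant term (the quantization of the ``$pq$''-part of $h_A$) and $L_2=\tfrac12 V^0\d^2$ is a constant-coefficient second-order operator (the quantization of the ``$pp$''-part), with $V^0$ determined by $A$.

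Next I would disentangle $e^{L_1+\hbar L_2}$. Because any two constant-coefficient second-order operators commute, the standard interaction-picture computation runs without obstruction: setting $P(s)=e^{-sL_1}e^{s(L_1+\hbar L_2)}$ one finds $P'(s)=\hbar\bigl(e^{-sL_1}L_2e^{sL_1}\bigr)P(s)$, where $e^{-sL_1}L_2e^{sL_1}=\tfrac12 V^0(s)\d^2$ is again constant-coefficient (the commutator of a linear vector field with a constant-coefficient second-order operator is of the same type) and $V^0(s)$ evolves linearly in $s$. Hence $P(s)=\exp\!\bigl(\tfrac\hbar2\bigl(\int_0^s V^0(\sigma)\,d\sigma\bigr)\d^2\bigr)$, and at $s=1$
\begin{equation*}
\widehat{\mathcal{R}}_t=e^{L_1}\circ\exp\!\Bigl(\tfrac\hbar2 V\d^2\Bigr),\qquad V:=\int_0^1 V^0(\sigma)\,d\sigma,
\end{equation*}
so the second-order operator acts first and the linear substitution $e^{L_1}\colon F(\q)\mapsto F(C\q)$ (for some $C$) acts last, which is already the shape of the asserted formula. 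It remains to identify $C$ and $V$.

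For that I would use the intertwining relation. From Lemma~\ref{lphihat} one gets $[\widehat{h}_A,\widehat\phi]=\widehat{A\phi}$ for $\phi\in\H$, hence $e^{\ad_{\widehat{h}_A}}\widehat\phi=\widehat{\mathcal{R}_t\phi}$, i.e.\ $\widehat{\mathcal{R}}_t\,\widehat\phi=\widehat{\mathcal{R}_t\phi}\,\widehat{\mathcal{R}}_t$. Applying this to the momentum-side vectors $\phi$, whose quantizations are multiplication operators, and evaluating the left-hand side from the factorized form (conjugation through $\exp(\tfrac\hbar2 V\d^2)$ sends a multiplication operator $q$ to $q+\hbar(V\d)$, after which $e^{L_1}$ performs the linear substitution), one reads off, by matching with $\widehat{\mathcal{R}_t\phi}$, that $C=\mathcal{R}_t^{-1}$ acting on $H[z]$ (a short sign check, using the block-triangular shape of $A$ forced above together with the symplectic relation) and that the coefficients of $V$ are forced to satisfy $\sum_{k,l}(-1)^{k+l}V_{kl}(t)z^kw^l=\bigl(\leftexp{T}{\mathcal{R}}_t(z)\mathcal{R}_t(w)-1\bigr)/(z+w)$; the symplectic identity $\leftexp{T}{\mathcal{R}}_t(-z)\mathcal{R}_t(z)=1$ is precisely what makes this series regular at $z+w=0$ and symmetric under $z\leftrightarrow w$. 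Finally, conjugating by $\widehat\Psi_t$ is harmless and turns this $\mathcal{R}_t$-version into the stated $R_t$-version, because $\Psi_t$ is an isometry from $\C^N$ with its standard form onto $H$ with the residue pairing, so $\leftexp{T}{\mathcal{R}}_t=\Psi_t\,\leftexp{T}{R}_t\,\Psi_t^{-1}$ and everything can be re-expressed in the dual bases $\d_i,\d^i$.

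The conceptual steps are short; the real work --- and the place where errors are easy --- is the bookkeeping in the last paragraph: keeping track of the $\pm z$ built into $\Om$, of the transpose produced by the residue pairing (which is exactly what promotes $R_t$ to $\leftexp{T}{R}_t$), and of $\mathcal{R}_t$ versus $\mathcal{R}_t^{-1}$, and then verifying that the comparison yields precisely the displayed generating series rather than some equivalent rearrangement of it. One should also record that $\exp(\tfrac\hbar2 V\d^2)$ is a well-defined operator on formal functions on $H[z]$ with coefficients in $\C_\hbar$, since each power of $\hbar$ carries only finitely many derivatives.
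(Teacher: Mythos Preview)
The paper does not prove this lemma: it is stated with attribution to Givental \cite{G2} and used as a black box, with the subsequent text only unpacking what the substitution $\q\mapsto\mathcal{R}_t^{-1}\q$ means and why the result is well defined on tame functions. Your outline is essentially the standard argument (and is in the spirit of Givental's original derivation): the crucial structural observation is that $A(z)=\log\mathcal{R}_t(z)$ has only positive powers of $z$, so $h_A$ has no $qq$-part and $\widehat{h}_A$ splits as a linear vector field plus $\hbar$ times a constant-coefficient second-order operator; the interaction-picture factorization then goes through because iterated commutators stay constant-coefficient second order. Your use of the intertwining relation $\widehat{\mathcal{R}}_t\,\widehat\phi=\widehat{\mathcal{R}_t\phi}\,\widehat{\mathcal{R}}_t$ to pin down $C=\mathcal{R}_t^{-1}$ and the generating function for $V_{kl}$ is a clean way to avoid a direct (and error-prone) computation of $\int_0^1 e^{-sL_1}L_2e^{sL_1}\,ds$. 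The caveats you flag --- the sign of $z$ in $\Omega$, transpose versus inverse, and well-definedness of $e^{\frac\hbar2 V\partial^2}$ order by order in $\hbar$ --- are exactly the points that need care, and you have identified them correctly.
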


The substitution $\q\mapsto \mathcal{R}^{-1}_t\q $ can be written more explicitly as follows:
\ben
q_0\mapsto q_0,\quad q_1\mapsto \overline{R}_1(t)q_0 + q_1,\quad q_2\mapsto \overline{R}_2(t)q_0 +\overline{R}_1(t)q_1 + q_2 \,,\dots
\een
where 
\ben
\mathcal{R}^{-1}_t(z)=1+\overline{R}_1(t)z+\overline{R}_2(t) z^2+\cdots \,.
\een
Note that this substitution is not a well-defined
operation on the space of formal functions. This complication,
however, is offset by a certain property of the
Witten--Kontsevich tau-function, which we will now explain.

By definition, an {\em asymptotical function} is a formal function  of the type:
\ben
\A(\q) = \exp \Big(\sum_{g=0}^\infty F^{(g)}(\q)\hbar^{g-1}\Big) \,.
\een
Such a function is called {\em tame} if the following $(3g-3+r)$-jet constraints are satisfied:
\ben
\frac{\d^{r} F^{(g)}}{\d q_{k_1}^{i_1}\cdots \d
  q_{k_r}^{i_r}}\Bigg|_{\q = 0} = 0 \quad \text{ if } \quad k_1+\cdots
+ k_r > 3g-3+r \,. 
\een   
The Witten--Kontsevich tau-function (up to the shift $q_1\mapsto q_1+1$) is tame for dimensional reasons, since $\dim\mathcal{M}_{g,r}=3g-3+r$.

Due to Givental \cite{G2}, the action of the operator
$\widehat{\mathcal{R}}_t$ on tame functions is well defined. Moreover, the
resulting series is also a tame asymptotical function. In particular,
the total ancestor potential $\A_t$ is a tame asymptotical function.

\subsection{The action of the calibration}\label{cali}
The quantized symplectic transformation $\widehat{\mathcal{S}}^{-1}_t$ acts on formal functions as follows.
\begin{lemma}[Givental \cite{G2}]\label{lsfock}
We have
\beq\label{S:fock}
\widehat{\mathcal{S}}^{-1}_t\, F(\q) = e^{\frac1{2\hbar} W\q^2}F\bigl((\mathcal{S}_t\q)_+\bigr) \,,
\eeq
where\/ $W\q^2$ is the quadratic form 
$$
W\q^2 = \sum_{k,l=0}^\infty (W_{kl}q_l,q_k)
$$ 
whose coefficients are defined by
\ben
\sum_{k,l=0}^\infty W_{kl}(t) z^{-k}w^{-l}=\frac{\leftexp{T}{\mathcal{S}}_t(z)\mathcal{S}_t(w)-1}{z^{-1}+w^{-1}} \,.
\een
\end{lemma}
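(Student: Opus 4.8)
The statement to prove is Lemma~\ref{lsfock}, describing the action of the quantized calibration operator $\widehat{\mathcal{S}}^{-1}_t$ on formal functions. This is attributed to Givental, so the proof is essentially a computation carried out in the symplectic loop space formalism already set up above.

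\textbf{Overall strategy.} The plan is to use the fact that $\mathcal{S}_t(z) = e^{A(z)}$ for some infinitesimal symplectic transformation $A(z)$, and that quantization is defined by $\widehat{e^{A}} = e^{\widehat{h}_A}$ where $h_A(\phi) = \tfrac12\Omega(A\phi,\phi)$. Since we want the action on formal functions rather than an abstract operator identity, I would verify directly that the right-hand side of \eqref{S:fock} defines an operator that (i) intertwines the Heisenberg-type commutation relations in the correct way, i.e.\ conjugates $\widehat\phi$ into $\widehat{\mathcal{S}_t\phi}$ by \leref{lphihat}, and (ii) is normalized correctly. By the Stone--von Neumann-type uniqueness built into the formalism (an operator implementing a given symplectic transformation is unique up to scalar, and the scalar is pinned down), this forces the formula.

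\textbf{Key steps in order.} First I would write $\mathcal{S}_t(z)^{-1} = 1 + S_1' z^{-1} + S_2' z^{-2} + \cdots$ and record how the symplectic transformation $\mathcal{S}_t^{-1}$ acts on the Darboux coordinates $q_k^i$, $p_{k,i}$: it sends $\phi \mapsto \mathcal{S}_t^{-1}\phi$, so $q$-type coordinates get mixed among themselves and $p$-type coordinates pick up contributions from the $q$'s (this is where the ``$+$'' projection $(\mathcal{S}_t\q)_+$ and the quadratic correction $W\q^2$ come from — the $p\mapsto p + (\text{quadratic in }q)$ part of the transformation, after quantization, becomes multiplication by $e^{\frac{1}{2\hbar}W\q^2}$). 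Second, I would compute the quadratic Hamiltonian $h_A$ explicitly using the generating-function identity $\sum W_{kl} z^{-k} w^{-l} = \frac{{}^T\!\mathcal{S}_t(z)\mathcal{S}_t(w) - 1}{z^{-1}+w^{-1}}$; the role of this identity is exactly to express $\Omega(A\phi,\phi)$ in Darboux coordinates, the denominator $z^{-1}+w^{-1}$ encoding the residue pairing $\Omega(\phi_1(-z),\phi_2(z))$. Third, I would exponentiate: because $A(z)$ is a \emph{lower-triangular} (strictly negative-power) perturbation, the operator $\widehat{h}_A$ has the schematic form ``(multiplication by quadratic in $q$) $+$ (first-order, preserving degree)'', with no $\hbar\,\d^2$ term; hence $e^{\widehat{h}_A}$ factors as a multiplication operator times a change-of-variables operator, yielding precisely $e^{\frac{1}{2\hbar}W\q^2} F((\mathcal{S}_t\q)_+)$. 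Finally, I would check the normalization/constant by evaluating at $t=0$, where $\mathcal{S}_0 = 1$ and both sides reduce to the identity, and noting that $\widehat{\mathcal{S}}_t^{-1}$ is defined to depend continuously on $t$ from this initial condition.

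\textbf{Main obstacle.} The delicate point is convergence/well-definedness: the substitution $\q \mapsto (\mathcal{S}_t\q)_+$ involves infinitely many terms, and $\mathcal{S}_t(z)$ has only finitely many positive powers when applied to $\q(z)\in H[z]$ but the projection and the quadratic form $W\q^2$ involve all the $S_k$. One must check that when this operator is applied to a formal function in the neighborhood of the shifted point $\tau - \one z$, each coefficient of the output is a well-defined element of $\C_\hbar$; this is where the formal-neighborhood bookkeeping of Sect.~\ref{giv} and the specific shift by $\one z$ matter (cf.\ the analogous tameness discussion for $\widehat{\mathcal R}_t$). I expect this to be the part requiring genuine care, whereas the algebraic identity of the two sides is a bookkeeping computation with the generating functions above.
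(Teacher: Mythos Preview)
The paper does not supply a proof of this lemma: it is stated with attribution to Givental \cite{G2} and used as a black box. So there is nothing in the paper to compare your argument against.

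That said, your sketch is a reasonable reconstruction of the standard argument. One comment on your ``main obstacle'': you overestimate the difficulty of the convergence issue here. Because $\mathcal{S}_t(z)=1+S_1 z^{-1}+S_2 z^{-2}+\cdots$ contains only \emph{nonpositive} powers of $z$, the substitution $q_k\mapsto q_k+S_1 q_{k+1}+S_2 q_{k+2}+\cdots$ is a genuine formal substitution (each new variable is expressed as a series in the old ones with increasing index), and the paper notes explicitly right after the lemma that ``this operation is well defined on the space of formal power series.'' The delicate convergence issues and the need for tameness arise only for $\widehat{\mathcal R}_t$, not for $\widehat{\mathcal S}_t^{-1}$. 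So the heart of the proof is purely the algebraic identification of $e^{\widehat h_A}$ with the right-hand side, which your steps (2)--(3) outline correctly: the lower-triangular shape of $A$ means $\widehat h_A$ contains only $q^2/\hbar$ and $q\,\partial_q$ terms (no $\hbar\,\partial^2$), and these two pieces exponentiate to the multiplication operator $e^{W\q^2/2\hbar}$ and the change of variables $\q\mapsto(\mathcal S_t\q)_+$ respectively.
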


The $+$ sign in \eqref{S:fock} means truncation of all negative powers of $z$, i.e., in $F(\q)$ we have to substitute (cf.\ \eqref{gauge}):
\ben
q_k\mapsto q_k + S_1(t) q_{k+1}+S_2(t) q_{k+2}+\cdots \,,\quad k=0,1,2,\dots\ .
\een
This operation is well defined on the space of formal power series.

\begin{lemma}\label{lstiso}
We have an isomorphism
\ben
\widehat{\S}_t^{-1} \colon \C_\hbar[[q_0,q_1+\one,q_2,\dots]]\to \C_\hbar[[q_0-\tau,q_1+{\bf 1},q_2,\dots]] \,.
\een
\end{lemma}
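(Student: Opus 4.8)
\textbf{Proof proposal for Lemma \ref{lstiso}.}

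The plan is to read off the claim directly from the explicit form of the operator $\widehat{\S}_t^{-1}$ given in \leref{lsfock}, combined with the one fact about the calibration that is not purely formal, namely \prref{translation}. Recall that by \leref{lsfock} the operator acts as
\ben
\widehat{\S}_t^{-1} F(\q) = e^{\frac1{2\hbar} W\q^2} F\bigl((\mathcal{S}_t\q)_+\bigr) \,,
\een
and that the substitution $(\mathcal{S}_t\q)_+$ amounts to $q_k \mapsto q_k + S_1(t)q_{k+1} + S_2(t)q_{k+2}+\cdots$. Since $\mathcal{S}_t(z)|_{t=0}=1$, this is a triangular, unipotent change of variables on $H[z]$ that preserves the grading by $z$-degree up to lower order, and the prefactor $e^{\frac1{2\hbar}W\q^2}$ multiplies by an invertible element of $\C_\hbar[[q_0,q_1,\dots]]$; hence $\widehat{\S}_t^{-1}$ is an isomorphism of $\C_\hbar[[q_0,q_1,\dots]]$ onto itself for formal-neighborhood reasons. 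The only content is therefore to track how the \emph{base point} of the formal neighborhood moves.

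First I would identify the base point on the source side. The total ancestor potential $\A_t$, and hence every element of $\C_\hbar[[q_0,q_1+\one,q_2,\dots]]$, is expanded in the formal neighborhood of $-\one z$, i.e.\ at $q_0 = 0$, $q_1 = -\one$, $q_k = 0$ for $k\ge 2$; this matches the shift $q_1 \mapsto q_1+\one$ coming from the Witten--Kontsevich normalization in \eqref{D:pt}. Next I would compute the image of this point under the change of variables $\q \mapsto (\mathcal{S}_t\q)_+$. Writing $c(z) = -\one z$, the new base point is $(\mathcal{S}_t(z)(-\one z))_+$. Using \prref{translation} with the homology class chosen so that $\f_\al(0,\gl,z)$ has the appropriate leading behavior — equivalently, using that $\mathcal{S}_t$ satisfies \eqref{de_1} with $\mathcal{S}_t|_{t=0}=1$ and that the identity vector field $\one$ generates translations as in \eqref{tinv} — one gets $(\mathcal{S}_t(z)(-\one z))_+ = \tau - \one z$, where $\tau = \tau(t)$ are the flat coordinates of $t$. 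Concretely, $\mathcal{S}_t(z) = 1 + S_1(t)z^{-1}+\cdots$ applied to $-\one z$ produces $-\one z + \tau + O(z^{-1})$ with $S_1(t)\one = \tau$ (this is the standard normalization of the calibration, $\mathcal{S}_t(z)^{-1}\one = \one - \tau z^{-1} + \cdots$, forced by \eqref{de_1} and the initial condition), and truncating to nonnegative powers of $z$ gives exactly $\tau - \one z$, i.e.\ $q_0 = \tau$, $q_1 = -\one$, $q_k = 0$ for $k\ge 2$. Therefore $F\bigl((\mathcal{S}_t\q)_+\bigr)$ is a formal power series in $q_0 - \tau$, $q_1 + \one$, $q_2,\dots$, and multiplication by the invertible prefactor $e^{\frac1{2\hbar}W\q^2}$ does not change the ring it lies in. This gives the asserted target $\C_\hbar[[q_0-\tau, q_1+\one, q_2,\dots]]$, and invertibility of the map follows by running the same argument for $\widehat{\S}_t$ (equivalently, inverting the unipotent substitution and the exponential prefactor).

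The main obstacle, and the only step that uses anything beyond formal bookkeeping, is pinning down that the base point moves exactly to $\tau - \one z$ rather than to some other shift; this is precisely where \prref{translation} (and behind it the symmetry \eqref{tinv} and \leref{lem:periods}) is needed, since it is the statement that conjugation by $\widehat{\S}_t$ intertwines the $t=0$ and general-$t$ pictures in a way compatible with the flat-coordinate translation by $\one$. Everything else — unipotence of the substitution, invertibility of $e^{\frac1{2\hbar}W\q^2}$, and the resulting ring isomorphism — is routine once the base point is correctly located. I would also remark that \leref{lstiso} is exactly the consistency check needed so that, in \eqref{dxna}, the operator $\widehat{\S}_t^{-1}$ carries the total ancestor potential $\A_t$ (expanded around $\tau - \one z$ after the trivial shift) to something expanded around $\tau - \one z$, matching the formal neighborhood in \deref{dtdp}.
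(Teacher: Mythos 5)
Your proof is correct and follows essentially the same route as the paper: everything reduces to the identity $S_1(t)\one=\tau(t)$, which the paper likewise extracts from the $z^0$-coefficient of $z\d_i\S_t=\d_i\bullet_t\S_t$ together with the initial condition $\S_t|_{t=0}=1$. One small bookkeeping remark: the new base point is the \emph{preimage} of $-\one z$ under the substitution $\q\mapsto(\S_t\q)_+$, not its forward image; since $(\S_t(z)(-\one z))_+=-\one z-S_1(t)\one$, your displayed evaluation contains a compensating sign slip, but the preimage is indeed $\tau-\one z$, so the conclusion stands.
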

\begin{proof}
We only need to check that $S_1(t){\bf 1}=\tau(t)$, which
can be proved as follows. Since $\mathcal{S}_t(z)$
satisfies the differential equations $z\d_i\mathcal{S}_t = \d_i\bullet_t
\mathcal{S}_t$, by comparing the coefficients in front of $z^0$, we
get $\d_i(S_1(t){\bf 1}) = \d_i$. Hence, $S_1(t){\bf 1}=\tau$ up to additive constants. 
But the constants must be $0$ because $S_1(0)=0$, since $\mathcal{S}_t(z)|_{t=0}=1$. 
\end{proof}

\comment{
The above discussion implies that $\D_B\in \C_\hbar[[q_0-\tau,q_1+{\bf
  1},q_2,\dots]]$. On the other hand, the total descendant potential
is independent of $\tau$ in the sense that $\d_a \D_B=0$ for all
$a=1,\dots,N$. Setting $\tau=q_0$ we get that $\D_B$ is a formal
series in $q_1+1,q_2,\dots$ whose coefficients are holomorphic in
$B'$, the open subset of semi-simple points in $B$. The coefficients
may have poles along the {\em caustic} locus $B\setminus{B'}.$ It is a
conjecture of Givental that $\D_B$ extends across the caustic, so it
should be a formal power series in $q_0$. The conjecture is proved
 for $A_N$ singularities in \cite{G3}, while for the other
simple singularities it can be deduced from the work of
Fan--Jarvis--Ruan \cite{FJR2}. Nevertheless, we do not assume this
result. We drop the index $B$ from $\D_B$ and for a given semi-simple
point $t\in B$ with flat coordinates $\tau=(\tau^1,\dots,\tau^N)$ we
denote by $\D_\tau\in \C_\hbar[[q_0-\tau,q_1+{\bf 1},q_2,\dots]]$ the
Taylor expansion of $\D$ at the point $\tau$.    }

\section{Analytic continuation of the vertex algebra representation }\label{savar}

In this section, we construct an analytic continuation of the representation of the vertex algebra $\F$
on the twisted Fock space. More precisely, we will construct 
formal differential operators $X_t(a,\la)$ for $a\in \F$,
whose coefficients are multivalued analytic functions of $(t,\la) \in (B\times \C)\setminus\Sigma$
branching along the discriminant $\Sigma$. These operators possess remarkable properties
and will be crucial for the proof of our main theorem.

\subsection{Twisted Fock space}\label{stwfock}
As in Sect.\ \ref{ssing}, consider a simple singularity of type $X_N$ $(X=A,D,E)$ with a Milnor lattice $Q=H_{2l}(X_{0,1};\Z)$.
Then $Q$ is a root lattice of type $X_N$ and the set $R\subset Q$ of vanishing cycles is the corresponding root system. 
The bilinear form $(\cdot|\cdot)$, equal $(-1)^l$ times the intersection form,
is such that $|\al|^2 = (\al|\al) =2$ for $\al\in R$. The monodromy group $W$ coincides with the Weyl group, and the classical monodromy is a 
Coxeter element $\si\in W$. 

Let $\{v_j\}_{j=1}^N$ be a basis for $\lieh=\C\otimes_\Z Q = H_{2l}(X_{0,1};\C)$ consisting of eigenvectors of $\si$:
\ben
\si(v_j) = e^{2\pi\ii m_j/h} v_j \,, \qquad j=1,\dots,N
\een
(see Table \ref{table1} in Sect.\ \ref{ssing}).
Since $(\cdot|\cdot)$ is $W$-invariant, we have $(v_i|v_j)=0$ unless $m_i+m_j=h$, which is equivalent to $i+j=N+1$. Hence, the dual basis $\{v^j\}_{j=1}^N$ can be chosen
$v^j=v_{N+1-j}$.

The $\si$-twisted Heisenberg algebra $\hat\lieh_\si$ from Sect.\ \ref{twheis} has a basis $\{K, {v_j}_{(k-m_j/h)} \}_{k\in\Z, 1\leq j\leq N}$.
Its irreducible highest weight representation, the $\si$-twisted Fock space $\F_\si$ can be identified with the space of polynomials in
${v_j}_{(-k-m_j/h)}$ where $k=0,1,\dots$ and $1\leq j\leq N$. 
We will slightly modify this representation. Introduce
\beq\label{fhbar}
\F_\hbar := \C_\hbar \otimes_\C \F_\si \cong \C_\hbar[\q] \quad \text{where} \quad
\q=\{q_k^j\}_{k=0,1,2,\dots}^{j=1,\dots,N} \,.
\eeq
Then $\hat\lieh_\si$ acts on $\F_\hbar$ as follows:
\beq\label{twh1}
\begin{split}
{v_j}_{(-k-m_j/h)} &= \frac{ \hbar^{-1/2} \, q_k^j }{ (m_j/h)_k } \,,
\\
v^j_{(k+m_j/h)} &= (m_j/h)_{k+1} \, \hbar^{1/2} \, \frac{\d}{\d q_k^j} \,, \qquad
 \,.
\end{split}
\eeq
for $k=0,1,2,\dots$ and $1\leq j\leq N$, where
\ben
(x)_k := x(x+1)\cdots(x+k-1) \,, \qquad (x)_0 :=1 \,.
\een

As in Sect.\ \ref{twheis}, $\F_\hbar$ is a $\si$-twisted representation of the vertex algebra $\F$, with generating fields
given by \eqref{twheis3}:
\beq\label{twh2}
Y(v^j,\ze) = \sum_{k\in\Z} v^j_{(k+m_j/h)} \, \ze^{-k-1-m_j/h} \,.
\eeq

\subsection{Period representation}\label{sperep}
For $\al\in\lieh$, we let
\ben
X_t(\al, \la) = \d_\la\widehat\f_\al(t,\la) = \d_\la\widehat\f_\al(t,\la)_+ + \d_\la\widehat\f_\al(t,\la)_-
\een
be the quantization of $\d_\la\f_\al(t,\la,z)$ (see \eqref{falpha} and Sect.\ \ref{ssympl}). 
More explicitly, due to \eqref{phihat}, we have:
\begin{align}\label{quant1}
\d_\la\widehat\f_\al(t,\la)_+ &= \sum_{k=0}^\infty \sum_{i=1}^N \, (-1)^{k+1} \, (I_\al^{(k+1)}(t,\la), \d^i) \,
\hbar^{1/2} \, \frac{\d}{\d q_k^i} \,,
\\ \label{quant2}
\d_\la\widehat\f_\al(t,\la)_- &= \sum_{k=0}^\infty \sum_{i=1}^N \, (I_\al^{(-k)}(t,\la), \d_i) \,
\hbar^{-1/2} \, q_k^i  \,,
\end{align}
where $\{\d^i\}$ and $\{\d_i\}$ are dual bases for $H$ with respect to the residue pairing $(,)$. 
Using the period isomorphism \eqref{per-map}, we may further arrange,
by changing the basis of flat vector fields if necessary, that $\d_i=v^i$ and $\d^i=v_i$.
By \eqref{per-0}, we get that near $\la=\infty$
\ben
(I_{v^j}^{(k+1)}(0,\la), \d^i) = \de_{ij} \d_\la^{k+1} \la^{-m_j/h}
= \de_{ij} (-1)^{k+1} (m_j/h)_{k+1} \, \la^{-k-1-m_j/h}
\een
and
\ben
(I_{v_j}^{(-k)}(0,\la), \d_i) 
= \de_{ij} \d_\la^{-k} \la^{-1+m_j/h}
= \frac{ \de_{ij} \la^{k-1+m_j/h} }{ (m_j/h)_k } 
\een
for all $k\geq0$. Comparing with \eqref{twh1}, \eqref{twh2}, we obtain the following.

\begin{lemma}\label{ltwh}
For\/ $\al\in\lieh$ and\/ $\la$ close to $\infty$, 
$X_0(\al, \la)$ coincides with\/ $Y(\al, \la)$ acting on\/ $\F_\hbar$.
\end{lemma}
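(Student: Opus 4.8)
The plan is to verify the claim by matching the two operator-valued series coefficient by coefficient. By definition, $X_0(\al,\la) = \d_\la\widehat\f_\al(0,\la)_+ + \d_\la\widehat\f_\al(0,\la)_-$, with the two pieces given explicitly by \eqref{quant1} and \eqref{quant2}. On the other side, $Y(\al,\la)$ acting on $\F_\hbar$ is a sum of the annihilation operators $v^j_{(k+m_j/h)}$ and creation operators ${v_j}_{(-k-m_j/h)}$ from \eqref{twh1}, assembled into the generating field \eqref{twh2} (and its analogue for the $v_j$). So it suffices to check that the coefficient of each $\d/\d q_k^i$ in $\d_\la\widehat\f_\al(0,\la)_+$ agrees with the corresponding coefficient in the annihilation part of $Y(\al,\la)$, and likewise that the coefficient of each $q_k^i$ in $\d_\la\widehat\f_\al(0,\la)_-$ agrees with the creation part.

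The computation reduces, by linearity in $\al$, to the case $\al = v^j$ (equivalently $\al = v_j$, since $\{v^j\}$ and $\{v_j\}$ are just a relabeling). First I would substitute the explicit period integrals $I^{(m)}_{v^j}(0,\la)$ coming from \eqref{per-0}, namely $(I^{(0)}_{v^j}(0,\la),\d_i) = \de_{ij}\la^{s_i}$ with $s_i = -m_i/h$, together with the differentiation identity $\d_\la I^{(k)}_\al = I^{(k+1)}_\al$ to get $I^{(m)}_{v^j}(0,\la)$ for all $m\in\Z$. These are exactly the two displayed formulas in the excerpt just before the lemma: for $m = k+1 \geq 1$ one gets $(-1)^{k+1}(m_j/h)_{k+1}\la^{-k-1-m_j/h}$ (up to the Kronecker delta), and for $m = -k \leq 0$ one gets $\la^{k-1+m_j/h}/(m_j/h)_k$. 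Plugging the first into \eqref{quant1} gives $\d_\la\widehat\f_{v^j}(0,\la)_+ = \sum_{k\geq 0} (m_j/h)_{k+1}\,\hbar^{1/2}\la^{-k-1-m_j/h}\,\d/\d q_k^j$, and comparing with $v^j_{(k+m_j/h)} = (m_j/h)_{k+1}\hbar^{1/2}\d/\d q_k^j$ from \eqref{twh1} and the field expansion \eqref{twh2} shows these agree. Plugging the second into \eqref{quant2} gives $\d_\la\widehat\f_{v^j}(0,\la)_- = \sum_{k\geq 0} \hbar^{-1/2}(m_j/h)_k^{-1}\la^{k-1+m_j/h}q_k^j$ (here $\al = v_j$ so the relevant dual index produces $v_{(-k-m_j/h)}$), which matches ${v_j}_{(-k-m_j/h)} = \hbar^{-1/2}q_k^j/(m_j/h)_k$ from \eqref{twh1} under the corresponding field expansion. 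Assembling the two halves yields $X_0(v^j,\la) = Y(v^j,\la)$ on $\F_\hbar$, and by linearity the same holds for all $\al\in\lieh$.

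I do not expect a genuine obstacle here; this is essentially a bookkeeping verification, and the excerpt has already set up the two key displayed identities for the period integrals that make it go through. The one point requiring a little care is the matching of powers of $\la$ and of $\hbar$ across the two conventions: the quantization rule \eqref{phihat} carries factors $\hbar^{\pm 1/2}$ and signs $(-1)^{k+1}$, while the twisted Heisenberg action \eqref{twh1} was deliberately normalized (with the Pochhammer factors $(m_j/h)_k$ and $(m_j/h)_{k+1}$) precisely so that these match; so the proof amounts to observing that the normalization in \eqref{fhbar}--\eqref{twh1} was chosen to make $X_0(\al,\la)$ and $Y(\al,\la)$ literally equal rather than merely proportional. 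One should also note that the identification requires the basis choice $\d_i = v^i$, $\d^i = v_i$ made via the period isomorphism \eqref{per-map} just before the lemma, so that the dual-basis pairings $(I^{(m)}_{v^j},\d_i)$ and $(I^{(m)}_{v^j},\d^i)$ collapse to Kronecker deltas; with that choice in force, the coefficient-by-coefficient comparison is immediate.
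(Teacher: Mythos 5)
Your proposal is correct and follows exactly the route the paper takes: the two displayed formulas for $(I_{v^j}^{(k+1)}(0,\la),\d^i)$ and $(I_{v_j}^{(-k)}(0,\la),\d_i)$ immediately preceding the lemma are the paper's entire proof, and the lemma is obtained by the same coefficient-by-coefficient comparison with \eqref{twh1}, \eqref{twh2} that you carry out. Your added remarks about the normalization in \eqref{twh1} and the basis choice $\d_i=v^i$, $\d^i=v_i$ correctly identify the only points where care is needed.
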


As a consequence, the coefficients of the Laurent expansions near $\la=\infty$ of $X_0(\al, \la)$ for $\al\in\lieh$ satisfy
the commutation relations of the $\si$-twisted Heisenberg algebra $\hat\lieh_\si$. On the other hand, by \leref{lphihat}, we have
\ben
[X_t(\al, \la), X_t(\be, \mu)] = \Om(\d_\la\f_\al(t,\la,z), \d_\mu\f_\be(t,\mu,z)) \,.
\een
Since $\f_\al(t,\la,z) = \mathcal{S}_t(z) \f_\al(0,\gl,z)$ and $\S_t(z)$ is a symplectic transformation, 
the above Lie bracket is independent of $t$. Let us denote by $Y_t^\infty(\al,\la)$ the Laurent series expansion of
$X_t(\al,\la)$ near $\la=\infty$.  

\begin{corollary}\label{ctwh}
For all\/ $t\in B$, the operator series $Y^\infty_t(\al, \la)$ generate a\/
$\si$-twisted representation of the vertex algebra\/ $\F$ on\/ $\F_\hbar$.
\end{corollary}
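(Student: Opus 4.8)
The plan is to deduce the statement from the case $t=0$, which is \leref{ltwh}, together with the fact -- already recorded in the discussion preceding the corollary -- that the bracket of the generating fields is independent of $t$. In outline: the fields $Y^\infty_t(\al,\la)$, $\al\in\lieh$, satisfy the same Heisenberg commutation relations and the same monodromy for every $t$ as they do at $t=0$, and hence, by the reconstruction procedure of Sect.\ \ref{twheis}, they extend to a $\si$-twisted representation of $\F$ on $\F_\hbar$.

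More concretely, I would proceed as follows. By \leref{ltwh}, for $\al\in\lieh$ the field $Y^\infty_0(\al,\la)$ equals the field $Y(\al,\la)$ of the standard $\si$-twisted representation of the Heisenberg vertex algebra $\F$ on $\F_\si$, extended $\C_\hbar$-linearly to $\F_\hbar$ (see Sect.\ \ref{twheis}); in particular \eqref{twheis4} and the monodromy relation \eqref{twrep2} hold at $t=0$. For general $t$, using $\f_\al(t,\la,z)=\S_t(z)\,\f_\al(0,\la,z)$ from \prref{translation} together with the facts that $\S_t(z)$ is a symplectic transformation and does not involve $\la$, I would draw two conclusions. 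First, the bracket
$$[X_t(\al,\la_1),X_t(\be,\la_2)]=\Om\bigl(\d_{\la_1}\f_\al(t,\la_1,z),\,\d_{\la_2}\f_\be(t,\la_2,z)\bigr)$$
is independent of $t$, so it equals its value at $t=0$, namely $(\al|\be)\,\d_{\la_2}\bigl(\la_1^{-p}\la_2^{p}\de(\la_1,\la_2)\bigr)$ as in \eqref{twheis4}; in particular this bracket is a scalar, so $(\la_1-\la_2)^2\,[Y^\infty_t(\al,\la_1),Y^\infty_t(\be,\la_2)]=0$ and the generating fields are pairwise local in the sense of \eqref{locpr} with $N_{\al\be}=2$. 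Second, the monodromy of $X_t(\al,\la)$ around $\la=\infty$ coincides with that of $X_0(\al,\la)$, because $\S_t(z)$ contributes no monodromy in $\la$ and quantization is linear in the argument; hence the fractional powers of $\la$ occurring in $Y^\infty_t(\al,\la)$ lie in the correct coset modulo $\Z$ and \eqref{twrep2} continues to hold. Finally, since $\F_\hbar=\C_\hbar[\q]$ is a polynomial ring on which only finitely many of the operators $\d/\d q_k^i$ act nontrivially on a given element, $Y^\infty_t(\al,\la)c$ has only finitely many negative powers of $\la$ for each $c\in\F_\hbar$, so each $Y^\infty_t(\al,\la)$ is indeed a field, and we set $Y_t(\vac,\la):=\id$.

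With these data I would then extend exactly as in Sect.\ \ref{twheis}: define $Y_t(a,\la)$ for an arbitrary $a\in\F$ by repeatedly applying the product formula \eqref{locpr3} of \prref{pnprod}, starting from $Y_t(\vac,\la)$ and $Y_t(\al,\la)=Y^\infty_t(\al,\la)$ for $\al\in\lieh$. By \reref{rnprod}, locality \eqref{locpr} together with the product identity \eqref{locpr3} is equivalent to the Borcherds identity \eqref{vert5}; since the generating fields are local and $\F$ is freely generated by $\lieh$, this makes the prescription consistent and shows that the resulting fields $Y_t(a,\la)$ satisfy the Borcherds identity. Together with \eqref{twrep2}, inherited from the generators, this exhibits $\F_\hbar$ as a $\si$-twisted representation of $\F$ generated by the $Y^\infty_t(\al,\la)$.

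The one point that requires a little care -- the main, if modest, obstacle -- is the well-definedness of the extension in the last step, i.e.\ that $Y_t(a,\la)$ depends only on $a\in\F$ and not on how $a$ is expressed as an iterated $n$-th product of elements of $\lieh$. This is precisely the reconstruction theorem for twisted modules over a freely generated vertex algebra applied to $\F$: because $\F$ is the free vertex algebra on $\lieh$ subject only to the free-field locality relations, and those relations are verified for the $Y^\infty_t(\al,\la)$ in the step above, the $\si$-twisted $\F$-module structure determined by the generating fields is unique, and no inconsistency can arise.
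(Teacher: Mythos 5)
Your proposal is correct and follows essentially the same route as the paper: the paper derives the corollary directly from \leref{ltwh} together with the observation (made in the two sentences preceding the corollary) that the bracket $[X_t(\al,\la),X_t(\be,\mu)]=\Om(\d_\la\f_\al(t,\la,z),\d_\mu\f_\be(t,\mu,z))$ is independent of $t$ because $\f_\al(t,\la,z)=\S_t(z)\f_\al(0,\la,z)$ with $\S_t(z)$ symplectic. Your additional remarks on the monodromy coset and on reconstructing $Y_t(a,\la)$ for general $a\in\F$ via \eqref{locpr3} and \reref{rnprod} are exactly how the paper handles the extension (it does so explicitly later, in \prref{dxtal}).
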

Our next goal is to express the operator series $Y_t^\infty(a,\la)$,
$a\in \F$ in terms of normally ordered products of the generating
fields and certain functions, which we call propagators. 


\subsection{Propagators and normally ordered product}\label{spnop}

We define the \emph{normally ordered product} of formal differential operators $D_1,\dots,D_r$ in $\q$
by putting all partial derivatives to the right of all variables, and we use the notation ${:}D_1\cdots D_r{:}$.
By definition, ${:}D_1\cdots D_r{:}$ remains the same if we
permute the factors.

In order to define the propagators, let us look at the identity
\ben
Y_t^\infty(\al,\mu)Y_t^\infty(\beta,\la) =
{:}Y_t^\infty(\al,\mu)Y_t^\infty(\beta,\la){:} +
P^{\infty}_{\al,\beta}(t,\mu,\la)\,,
\een
where $\al,\be\in\lieh$ and $P^{\infty}_{\al,\beta}(t,\mu,\la)$ is the Laurent
expansion at $\la=\infty$ and $\mu=\infty$ of the following series
\beq\label{propser}
\bigl[ \d_\mu\widehat\f_\al(t,\mu)_+ , \d_\la\widehat\f_\be(t,\la)_- \bigr]
= \sum_{k=0}^\infty \, (-1)^{k+1} \, (I_\al^{(k+1)}(t,\mu), I_\be^{(-k)}(t,\la)).
\eeq
Note that the above series is convergent in the formal
$\mu^{-1}$-adic topology.

Since $Y_t^\infty(\al,\mu)$ and $Y_t^\infty(\beta,\la)$
satisfy the commutation relations of the $\si$-twisted Heisenberg algebra (cf.\ \eqref{twheis4}), we have 
\ben
(\mu-\la)^2\,P^{\infty}_{\al,\beta}(t,\mu,\la)  \in\C_\hbar(\!(\la^{-1},\mu^{-1})\!) \,.
\een
This implies that 
\ben
P^{\infty}_{\al,\beta} (t,\mu,\la)= \iota_{\mu,\la} \Big(
(\al|\beta)(\mu-\la)^{-2} + \sum_{k=0}^\infty
P^{\infty,k}_{\al,\beta}(t,\la) (\mu-\la)^k\Big),
\een
where $\io_{\mu,\la}$ denotes the expansion for $|\mu|>|\la|$ (cf.\
\eqref{iota}) and $ P^{\infty,k}_{\al,\beta} (t,\la) $ are
some formal Laurent series in $\la^{-1}$. 
The next result is reminiscent of the well-known Wick formula (see e.g.\ \cite{K2}, Theorem 3.3).
\begin{proposition}\label{dxtal}
For\/ $a\in\F$ of the form
\ben
a = \al^1_{(-k_1-1)} \cdots \al^{r}_{(-k_{r}-1)} \vac \,, \qquad
r\geq1 \,, \; \al^i\in\lieh \,, \; k_i \geq0 \,,
\een
we have
\beq\label{Wick}
Y^\infty_t(a,\la) = \sum_{J} \, \Bigl( \prod_{ (i,j)\in J } \d_\la^{(k_j)} P^{\infty,k_i}_{\al^i,\al^j}(t,\la) \Bigr)
\; {:} \Bigl( \prod_{l\in J'} \d_\la^{(k_l)} Y^\infty_t(\al^l, \la)  \Bigr) {:} \,,
\eeq
where the sum is over all collections\/ $J$ of disjoint ordered pairs\/ $(i_1,j_1),$ $\dots,$ $(i_s,j_s)$ $\subset \{1,\dots,r\}$ such that\/
$i_1<\cdots<i_s$ and\/ $i_l<j_l$ for all $l$,
and\/ $J' = \{1,\dots,r\} \setminus \{i_1,\dots,i_s,j_1,\dots,j_s\}$.
\end{proposition}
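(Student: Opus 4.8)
The plan is to prove \prref{dxtal} by induction on $r$, using the operator product expansion formula \eqref{locpr3} from \prref{pnprod} as the basic tool, exactly as in the derivation of the classical Wick formula for untwisted free fields. The key point is that on $\F_\hbar$ the only nontrivial contractions among the generating fields $Y^\infty_t(\al^i,\la)$ are given by the propagators $P^\infty_{\al^i,\al^j}$, since $\lieh$ is abelian and the commutator \eqref{propser} is a scalar (multiple of $(\al^i|\al^j)$); in vertex-algebra language, $\al^i_{(n)}\al^j = \de_{n,1}(\al^i|\al^j)\vac$ for $n\ge 0$, so all higher products vanish.

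First I would record the base case $r=1$, where \eqref{Wick} reads $Y^\infty_t(\al^1_{(-k_1-1)}\vac,\la) = \tfrac1{k_1!}\d_\la^{k_1} Y^\infty_t(\al^1,\la)$ with $J$ empty and $J'=\{1\}$; this is immediate from $\al^1_{(-k_1-1)}\vac = S_{k_1}(\cdot)$-type identity, or more directly from the fact that the modes of $Y(\al,\la)$ are $\al_n$ and differentiating $\la$ shifts the mode index. For the inductive step, write $a = \al^1_{(-k_1-1)} b$ with $b = \al^2_{(-k_2-1)}\cdots\al^r_{(-k_r-1)}\vac$. Apply \prref{pnprod} with $\ze_2 = \la$, using $N_{ab}=2$ (since $\al^1_{(n)}b=0$ for $n\ge 2$ as $|\al^1|$-type bounds are trivial here), to get
\begin{equation*}
Y^\infty_t(\al^1_{(-k_1-1)} b, \la) = \frac1{(k_1+1)!} \d_{\ze_1}^{k_1+1} \Bigl( (\ze_1-\la)^2 \, Y^\infty_t(\al^1,\ze_1) Y^\infty_t(b,\la) \Bigr)\Big|_{\ze_1=\la} \,.
\end{equation*}
Now expand $Y^\infty_t(b,\la)$ by the inductive hypothesis as a sum over collections $J$ of pairs inside $\{2,\dots,r\}$, and commute $Y^\infty_t(\al^1,\ze_1)$ to the normally ordered position: each time it passes a factor $\d_\la^{(k_l)} Y^\infty_t(\al^l,\la)$ inside the normal ordering, it produces either nothing (if it stays to the left, contributing to the new normally ordered product) or a contraction term $\d_{\ze_1}^{(0)}\d_\la^{(k_l)} P^\infty_{\al^1,\al^l}(t,\ze_1,\la)$. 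Collecting the term where $\al^1$ is \emph{not} contracted reproduces the collections $J$ with $1\in J'$; collecting the term where $\al^1$ is contracted against some $\al^l$ reproduces the collections with $(1,l)\in J$, and the residual $(\ze_1-\la)^2$ together with the Taylor expansion $P^\infty_{\al^1,\al^l}(t,\ze_1,\la) = \io_{\ze_1,\la}\bigl((\al^1|\al^l)(\ze_1-\la)^{-2} + \sum_k P^{\infty,k}_{\al^1,\al^l}(t,\la)(\ze_1-\la)^k\bigr)$ is exactly what makes the residue extraction $\tfrac1{(k_1+1)!}\d_{\ze_1}^{k_1+1}(\cdots)|_{\ze_1=\la}$ pick out $\d_\la^{(k_l)}\d_\la^{(k_1)} P^{\infty,k_1}_{\al^1,\al^l}(t,\la)$ after reindexing (note $\d_{\ze_1}$ of the $(\ze_1-\la)$-adic expansion becomes $\d_\la$ at coincidence up to the combinatorial shift, and the extra $(\ze_1-\la)^2$ cancels the pole so that no $(\al^1|\al^l)(\ze_1-\la)^{-2}$ singular term survives). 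Summing over $l$ and over whether $\al^1$ is contracted gives precisely the right-hand side of \eqref{Wick} for $a$; the ordering conditions $i_1<\cdots<i_s$ and $i_l<j_l$ are preserved because the new pair always has first entry $1$, the smallest index.

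The main obstacle I expect is purely bookkeeping: carefully matching the binomial/factorial factors produced by $\tfrac1{(k_1+1)!}\d_{\ze_1}^{k_1+1}$ acting on a product of a $(\ze_1-\la)^2$, the expanded propagators, and the normally ordered factors, against the factors $\d_\la^{(k_j)}$ in \eqref{Wick}. Concretely one must verify, using the Leibniz rule and the identity $\d_{\ze_1}^{m}\bigl(\io_{\ze_1,\la}(\ze_1-\la)^{n}\bigr)\big|_{\ze_1=\la} = \de_{m,-n} m!$ (valid for $n$ negative), that the only surviving contribution from the $(\al^1|\al^l)(\ze_1-\la)^{-2}$ piece combined with the prefactor $(\ze_1-\la)^2$ and one derivative hitting the normally ordered block is the stated propagator term with $k_i=k_1$, and that no spurious terms with the scalar $(\al^1|\al^l)$ appear separately. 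I would also need to check that normally ordered products of the twisted fields are well defined as formal differential operators with multivalued-analytic coefficients, which follows from \eqref{quant1}, \eqref{quant2} since each $Y^\infty_t(\al^l,\la)$ is a sum of a finite-order-pole ``creation'' part and an ``annihilation'' part, and normal ordering simply reorders these; convergence in the $\la^{-1}$-adic topology is automatic. Once the combinatorics is pinned down, the induction closes and the proposition follows.
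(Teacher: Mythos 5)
Your overall strategy --- induction on $r$ driven by the product formula \eqref{locpr3}, with the single contraction identified as the propagator --- is the same as the paper's, which works out only the case $r=2$, $k_2=0$ explicitly and asserts the rest by induction. However, your inductive step contains a step that fails as written. For $a=\al^1$ and $b=\al^2_{(-k_2-1)}\cdots\al^r_{(-k_r-1)}\vac$ one computes
$\al^1_{(n)}b=n\sum_{j}\de_{n,k_j+1}(\al^1|\al^j)\,\al^2_{(-k_2-1)}\cdots\widehat{\al^j_{(-k_j-1)}}\cdots\vac$,
so $\al^1_{(n)}b\ne0$ for $n=k_j+1$; your claim that $N_{ab}=2$ is therefore false whenever some $k_j\ge1$. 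Concretely, the contraction of $Y^\infty_t(\al^1,\ze_1)$ with a factor $\d_\la^{(k_j)}Y^\infty_t(\al^j,\la)$ has a pole of order $k_j+2$ at $\ze_1=\la$, so $(\ze_1-\la)^2\,Y^\infty_t(\al^1,\ze_1)Y^\infty_t(b,\la)$ still has a pole of order $\max_j k_j$ and the coincidence limit you write down is not defined; one must take $N_{ab}=\max_{j\ge2}k_j+2$. There is also an off-by-one in the derivative order: \eqref{locpr3} produces $a_{(N-1-k)}b$, so to obtain $\al^1_{(-k_1-1)}b$ you need $k=N_{ab}+k_1$ derivatives, i.e.\ $\d_{\ze_1}^{(N_{ab}+k_1)}$ rather than $\d_{\ze_1}^{(k_1+1)}$ (even in the paper's case $N=2$, $k_2=0$ the correct order is $k_1+2$).

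With these corrections the argument does close: the derivation property of $[Y^\infty_t(\al^1,\ze_1),\,\cdot\,]$ on normally ordered products guarantees that only single contractions of $\al^1$ against one factor survive (giving the new pairs $(1,l)$) besides the uncontracted term (giving $1\in J'$), and the prefactor $(\ze_1-\la)^{N_{ab}}$ together with the Taylor-coefficient extraction annihilates the $(\al^1|\al^l)(\ze_1-\la)^{-k_l-2}$ singular parts and picks out exactly $\d_\la^{(k_l)}P^{\infty,k_1}_{\al^1,\al^l}(t,\la)$. But as displayed, your inductive-step formula is incorrect, and the "bookkeeping" you defer is precisely where the fix has to be made.
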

In the above formula, $J=\emptyset$ is allowed, an empty product is
considered equal to~$1$, and here and further we use the divided-powers notation $\d_\la^{(k)} := \d_\la^k/k!$.
\begin{proof}
Let us prove the proposition only for $r=2$. The general case follows
easilly by induction on $r$. Put $\al^1:=\al$, $\al^2:=\beta$, $k_1:=k$, and $k_2:=0$. 
By \prref{pnprod}, we have for $v\in \F_\hbar$
\ben
Y_t^\infty(\al_{(-k-1)}\be, \la) v = \d_\mu^{(k+2)} \bigl( (\mu-\la)^2 \, Y_t^\infty(\al, \mu) Y_t^\infty(\be, \la) v\bigr)\big|_{\mu=\la}.
\een
Using the expansion of $P_{\al,\beta}^{\infty}$ in the powers of $\mu-\la$, we get 
\ben
 {:} \bigl( \d_\la^{(k)} Y_t^\infty(\al, \la) \bigr) Y_t^\infty(\be, \la){:} v
+ P^{\infty,k}_{\al,\be}(t,\la) v \,,
\een
as claimed.
\end{proof}

Let $F_{\al^1,\dots,\al^r}(t,\la)$ be a {\em multivalued analytic}
function of $(t,\la)\in(B\times \C)\setminus\Si$ depending on
$\al^1,\dots,\al^r \in\lieh,$ i.e., this is a function holomorphic in
a neighborhood of some reference point, say $(0,1)$, which can be
extended analytically along any path $C\subset (B\times \C)\setminus\Si$.  
\begin{definition}\label{dmonow}
We say that a multivalued analytic function
$F_{\al^1,\dots,\al^r}(t,\\ \la)$
has \emph{monodromy $W$} if its analytic continuation along a loop $C$ is
$ F_{w\al^1,\dots,w\al^r}(t,\la)$ where $w\in W$ is the monodromy
operator induced by $C$ (see Sect.\ \ref{mfib}).
\end{definition}
Note that by definition $X_t(\al,\la)$, $\al\in \lieh$, is a formal differential
operator whose coefficients are multivalued analytic functions with
monodromy $W$. We will prove in Sect.\ \ref{acpf} below the following
theorem. 
\begin{theorem}\label{tprop1}
For all\/ $\al,\be\in\lieh$ the Laurent series
$P^{\infty,k}_{\al,\beta}(t,\la)$ are convergent and give rise to multivalued analytic functions
$P^{k}_{\al,\beta}(t,\la)$ with monodromy $W$. 
\end{theorem}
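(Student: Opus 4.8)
The plan is to establish the three assertions of \thref{tprop1} — convergence of $P^{\infty,k}_{\al,\be}(t,\la)$ for $|\la|$ large, its continuation to a multivalued analytic function on $(B\times\C)\setminus\Si$, and $W$-equivariance of that continuation — by describing the propagator \emph{locally} near $\la=\infty$ and near each critical value $\la=u_i(t)$, and then gluing the two local descriptions using the fact that for a simple singularity the monodromy group is the finite reflection group $W$.

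Near $\la=\mu=\infty$ I would compute at $t=0$, where by weighted-homogeneity \eqref{per-0} the period vectors are explicit: in each $\si$-eigendirection $I^{(k)}_\al(0,\la)$ (resp.\ $I^{(-k)}_\al(0,\la)$) is a constant multiple of $\la^{s-k}$ (resp.\ $\la^{s+k}$), where $s$ is the exponent of \eqref{per-0}. Substituting into \eqref{propser} and using that the residue pairing couples only eigendirections whose exponents sum to $-1$, the sum over $k$ collapses to a rational function of $\mu/\la$, and one obtains the closed form
\[
P^\infty_{\al,\be}(0,\mu,\la)=\Bigl(I^{(0)}_\al(0,\mu),\ \Bigl(\tfrac{\mu+\la}{2(\mu-\la)^2}-\tfrac{\theta}{\mu-\la}\Bigr)\,I^{(0)}_\be(0,\la)\Bigr),
\]
from which the convergence of the $P^{\infty,k}_{\al,\be}(0,\la)$ and the fact that $P^\infty_{\al,\be}(0,\cdot,\cdot)$ is multivalued analytic on $(\C^*)^2\setminus\{\mu=\la\}$, with monodromy through the classical monodromy $\si\in W$, are immediate. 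For general $t$ the corresponding statement near $\la=\infty$ will follow from the gluing step below, by joining a neighbourhood of $\la=\infty$ to a neighbourhood of some $u_i(t)$ by a path in $\C$ avoiding $\Si$; alternatively it can be derived from \prref{translation} and the symplectic property of $\S_t$.

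For the continuation I would move to the discriminant. Fix a generic $(t_0,u_i(t_0))\in\Si$, let $\be_i\in R$ be the cycle vanishing over $u_i$, and write $\al=\al'+c_i\be_i$ with $\al'=\tfrac12(\al+r_{\be_i}\al)$ fixed by $r_{\be_i}$ and $c_i=-\tfrac12(\be_i|\al)$, and similarly for $\be$. Then $I^{(k)}_{\al'}(t,\la)$ is holomorphic at $\la=u_i$, whereas by \prref{vanishing_a1} the piece $I^{(k)}_{\be_i}(t,\la)$ is $\Psi_tR_t(z)$ applied to the period vector of an $A_1$-singularity, hence explicit; in particular its branching at $\la=u_i$ is that of $(\la-u_i)^{1/2}$. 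Splitting \eqref{propser} into the four resulting terms and resumming, the $\be_i$–$\be_i$ term becomes the $A_1$ propagator conjugated by the symplectic operator $\Psi_tR_t$ — which one computes explicitly — and the $\al'$–$\be'$ term and the two cross terms are handled the same way using the explicit form of $R_t$. This shows that near $\la=u_i$ the propagator series converges, and that its monodromy around $\la=u_i$ sends $(\al,\be)$ to $(r_{\be_i}\al,r_{\be_i}\be)$. Since $r_{\be_i}^2=\id$ and, for a simple singularity, $\pi_1((B\times\C)\setminus\Si)$ is the Artin–Brieskorn braid group modulo the squares of its standard generators (\leref{path-independence}), the analytic continuation of $P^\infty_{\al,\be}$ along a path depends only on the element of $W$ it induces; hence the near-$\infty$ and near-$u_i$ descriptions are analytic continuations of one another and glue to a single multivalued analytic function $P^k_{\al,\be}(t,\la)$ on $(B\times\C)\setminus\Si$ with monodromy $W$.

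I expect the crux to be this last step — showing that the Laurent expansion of the propagator at $\la=\infty$ and the $A_1$-type expansion at each $\la=u_i$ are genuine analytic continuations of one another, not merely formally compatible. This is precisely where simplicity of the singularity is indispensable: only then is the monodromy group a finite Coxeter group realized as a quotient of the braid group by the squares of its generators, which is exactly what rules out monodromy of $P^\infty_{\al,\be}$ beyond the $W$-action on its labels. The bookkeeping needed to resum the four terms of the $A_1$ splitting, controlling the cross terms via the structure of $R_t$ and the symplectic properties of $\Psi_tR_t$ and $\S_t$, is the technical heart of the proof (carried out in Section \ref{acpf}).
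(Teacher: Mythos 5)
Your outline correctly identifies several of the right ingredients (the explicit computation at $t=0$ near $\la=\infty$, the reduction to an $A_1$-singularity near the discriminant via Proposition \ref{vanishing_a1}, and the presentation of $W$ as the braid group modulo squares of generators), but it has a genuine gap at exactly the point you flag as the crux. You only ever produce \emph{local} expansions of the propagator: a convergent Laurent series at $\la=\infty$ and Puiseux-type expansions at each $\la=u_i(t)$. These neighbourhoods do not cover $(B\times\C)\setminus\Si$, so there is no object to continue analytically along an arbitrary path, and the phrase ``the analytic continuation of $P^\infty_{\al,\be}$ along a path depends only on the element of $W$ it induces'' presupposes the global multivalued function whose existence is precisely what is to be proved. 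Invoking \leref{path-independence} here is circular: in the paper that lemma is a statement about closed-loop integrals of a specific $1$-form, not a general principle about continuations of formal series.

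The missing idea is the integral representation of (the logarithm of) the phase factor. Using the differential equations of \leref{lem:periods} one shows
\[
d^B\,\Om\bigl(\f_\al(t,\mu,z)_+,\f_\be(t,\la,z)_-\bigr)=I^{(0)}_\al(t,\mu)\bullet_t I^{(0)}_\be(t,\la)\,,
\]
so that $\log B_{\al,\be}(t,\la;\xi)$ is a line integral of an analytic $1$-form along a $\xi$-path in $B$ from (a regularization of) $0$ to $t-\la\one$; see \eqref{integral-11}. This manifestly extends along any path avoiding the discriminant, and its Laurent expansion at $\infty$ recovers $P^{\infty}_{\al,\be}$ by \eqref{phase-inf} together with the explicit formula \eqref{phase:prim} at $t=0$ (your closed form plays the same role). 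The monodromy-$W$ statement then reduces, via \leref{mon-inv}, to showing that the integral over a closed $\xi$-loop whose monodromy fixes $\al$ and $\be$ lies in $2\pi\ii\Z$; that is where Lemmas \ref{vanishing-period} and \ref{path-independence} enter, the former requiring a careful inductive treatment of the divergent asymptotic series $R_t(z)$, so your proposed ``resummation of the four terms using the explicit form of $R_t$'' cannot be carried out naively. Matching the global object with the local $A_1$-type expansion at $\la=u_i$ is then a separate statement (\thref{t-ope}), proved by analyticity and homogeneity of the connecting factor $f_{\al,\be}$, not by the group-theoretic argument alone.
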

We will show in Sect.\ \ref{acpf} that, in fact, the series 
\beq\label{propag}
P_{\al,\be}(t,\la;\xi) 
= (\al|\be) \xi^{-2} + P^0_{\al,\be}(t,\la) \xi^{0} + P^1_{\al,\be}(t,\la) \xi^{1} + \cdots 
\eeq
is convergent for sufficiently small $|\xi|>0$, but this will not be needed here. 
We call $P_{\al,\be}(t,\la;\xi)$ the \emph{propagator} from $\al$ to
$\be$.
\begin{example}\label{eprop}
For\/ $\al,\be\in\lieh$ such that\/ 
$\si(\al) = e^{-2\pi\ii p} \al$ $(p\in\frac1h\Z)$, we have
\beq\label{prop-0}
\begin{split}
\bigl[ \d_\mu\widehat\f_\al(0,\mu)_+ , &\d_\la\widehat\f_\be(0,\la)_- \bigr]
= (\al|\be) \d_\la \io_{\mu,\la} \frac{ \la^{p} \mu^{-p} }{\mu-\la} 
\\
&= (\al|\be) \sum_{k=0}^\infty (p+k) \la^{p+k-1} \mu^{-p-k-1} \,.
\end{split}
\eeq
This follows from \leref{ltwh} and \eqref{twheis41} or, alternatively, from \eqref{propser} and \eqref{per-0}.
For\/ $|\mu|>|\la|>0$, the above series converges to a multivalued analytic function of\/ $(\la,\mu)$
with a pole of order at most\/ $2$ at\/ $\mu=\la$.
\end{example}

Let us denote by $\io_\la$ the Laurent expansion near
$\la=\infty$. Formula \eqref{Wick} and \thref{tprop1} imply that 
\beq \label{txtla}
 Y_t^\infty(a,\la)=\io_\la X_t(a,\la) \,,\qquad a\in \F\,,
\eeq
where $X_t(a,\la)$ is a formal differential operator in\/ $\q$ 
whose coefficients are polynomial expressions of  the periods and the
propagators. Namely, assuming the same
notation and conventions as in \prref{dxtal}, we have
\beq\label{wick-1}
X_t(a,\la) = \sum_{J} \, \Bigl( \prod_{ (i,j)\in J } \d_\la^{(k_j)} P^{k_i}_{\al^i,\al^j}(t,\la) \Bigr)
\; {:} \Bigl( \prod_{l\in J'} \d_\la^{(k_l)} X_t(\al^l, \la)  \Bigr) {:} \,.
\eeq

\begin{remark}\label{rvq}
The operators\/ $X_t(a,\la)$ can be defined for all\/ $a\in V_Q$, but this will not be needed here. In particular, $X_t(a,\la)$
for\/ $a\in\lieh$ and\/ $a=e^\al$ $(\al\in R)$ provide a realization of the basic representation of the affine Kac--Moody algebra\/ $\hat\lieg$
(cf.\ \cite{FGM}). The operators\/ $X_t(e^\al,\la)$ are defined in terms of the so-called \emph{vertex operators} (cf.\ \eqref{twlat12}):
\beq\label{vertop}
\Gamma_\al(t,\la) = {:} \exp \widehat\f_\al(t,\la) {:} 
= \exp\bigl( \widehat\f_\al(t,\la)_-\bigr) \exp\bigl( \widehat\f_\al(t,\la)_+\bigr) \,.
\eeq
\end{remark}

\subsection{Behavior near a critical point}\label{sxtal}

Our next goal is to understand the behavior of $X_t(a,\la)$ near a generic point $(t,u_i(t))$ on the discriminant.
We will write $u=u_i(t)$ for short, and will fix a cycle $\be\in\lieh$ vanishing over $(t,u)$.
Denote by $\io_{\la-u}$ the operation of Laurent expansion near $\la=u$, and let
\ben
Y_t^u(a, \la) := \io_{\la-u} X_t(a, \la) \,, \qquad a\in\F\,.
\een

The following properties of the propagators will be proved in Sect.\ \ref{acpf} below.
\begin{theorem}\label{tprop2} The following statements hold{\rm:}
\begin{enumerate}
\item[(a)]
 If\/ $(\al'|\be)=(\al''|\be)=0$, then the Taylor coefficients\/
$P^k_{\al',\al''}(t,\la)$ of the propagator are analytic near\/ $\la=u$.
 
\item[(b)]
 There exists\/ $r_i(t)>0$ such that for all\/ $\al\in\lieh$ the Laurent expansion of\/ $P_{\al,\be}(t,\la;\mu-\la)$ in the domain\/
$r_i(t)>|\mu-u|>|\la-u|>0$ is equal to
\beq\label{prop-seru}
\sum_{k=0}^\infty \, (-1)^{k+1} \, ( \io_{\mu-u} I_\al^{(k+1)}(t,\mu), \io_{\la-u} I_\be^{(-k)}(t,\la)) \,.
\eeq
\end{enumerate}
\end{theorem}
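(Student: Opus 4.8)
The plan is to reduce both statements to the explicit local computation of period integrals near a generic point of the discriminant, using Proposition \ref{vanishing_a1} together with the isomonodromic conjugation by $\mathcal{S}_t$ and $\mathcal{R}_t$. First I would fix a generic $t\in B$ and the critical value $u=u_i(t)$, choose canonical coordinates so that $u=u_i$ is simple, and recall from Proposition \ref{vanishing_a1} that for the vanishing cycle $\be$ one has, near $\la=u$,
\begin{equation*}
\f_{\be}(t,\gl,z) = \Psi_t R_t(z) \sum_{k\in \Z} (-z\d_\gl)^k \, \frac{2e_i}{\sqrt{2(\gl-u)}} \,.
\end{equation*}
Thus the period vectors $I^{(k)}_\be(t,\la)$ have a square-root branch point at $\la=u$ with a very explicit leading behavior, governed only by the $i$-th canonical direction and the regular (in $z$) operator $\Psi_t R_t(z)$. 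For a general $\al\in\lieh$, the period vectors $I^{(k)}_\al(t,\la)$ are multivalued near $\la=u$ with monodromy given by the Picard--Lefschetz reflection $r_\be$; decomposing $\al$ into the component along $\be$ (which picks up the branch cut) and the component in $\be^\perp$ (which stays single-valued), one controls the local analytic structure of $I^{(k)}_\al$ near $\la=u$.

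For part (a): if $(\al'|\be)=(\al''|\be)=0$ then both $I^{(k)}_{\al'}(t,\la)$ and $I^{(k)}_{\al''}(t,\la)$ are single-valued, hence analytic, in a punctured neighborhood of $\la=u$. I would then argue that there is in fact no pole at $\la=u$: the only possible singularity of the bilinear pairing entering $P^\infty_{\al',\al''}$ comes from the vanishing cycle direction, which is orthogonal to both $\al'$ and $\al''$. More precisely, using the differential equation \eqref{periods:de2} for the period vectors together with $\theta$-homogeneity, one sees that the components of $I^{(k)}_{\al'}$, $I^{(k)}_{\al''}$ transverse to $\d/\d u_i$ extend holomorphically across $\la=u$, and the $\d/\d u_i$-component, although singular, is killed in the pairing by the residue-pairing orthogonality $(\d/\d u_i,\d/\d u_j)=\delta_{ij}/\Delta_i$ once contracted against the single-valued transverse parts. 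Hence $P^k_{\al',\al''}(t,\la)$ is analytic at $\la=u$. Then Theorem \ref{tprop1} (analytic continuation of the propagator) identifies these Taylor coefficients with $P^k_{\al',\al''}(t,\la)$.

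For part (b): the claim is that the propagator $P_{\al,\be}(t,\la;\mu-\la)$, which by Theorem \ref{tprop1} is the analytic continuation of $P^\infty_{\al,\be}$, has a second Laurent expansion — now centered at the critical point — given by \eqref{prop-seru}. I would compute the right-hand side directly: write $\al = c\,\be + \al^\perp$ with $(\al^\perp|\be)=0$, so that $I^{(k)}_\al = c\,I^{(k)}_\be + I^{(k)}_{\al^\perp}$. For the $\al^\perp$ part, the series \eqref{prop-seru} converges because the transverse period vectors are holomorphic at $\la=u,\mu=u$ (by part (a)'s analysis), and its sum is analytic with at most a double pole at $\mu=\la$, matching the propagator's known pole structure \eqref{propser}. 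For the $c\,\be$ part, I substitute the explicit formula from Proposition \ref{vanishing_a1}: the pairing $(I^{(k+1)}_\be(t,\mu),I^{(-k)}_\be(t,\la))$ becomes, after using $\Psi_t R_t(z)$ and the residue-pairing normalization, an expansion of $(\mu-u)^{-1/2}(\la-u)^{-1/2}$-type functions whose resummed generating series over $k$ is a meromorphic function of $\mu,\la$ with the expected double pole at $\mu=\la$; this is exactly the $A_1$ propagator transported by the regular gauge factor, so it is single-valued and analytic in the annular region $r_i(t)>|\mu-u|>|\la-u|>0$. Since both expansions (the given one at $\infty$ and the candidate \eqref{prop-seru} at $u$) are Laurent expansions of the same multivalued analytic function $P_{\al,\be}(t,\la;\mu-\la)$ furnished by Theorem \ref{tprop1}, and they agree after analytic continuation, they coincide; the radius $r_i(t)$ is simply the distance from $u_i(t)$ to the nearest other branch point of the period vectors.

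The main obstacle I anticipate is \textbf{part (b) for the non-transverse component} $c\,\be$: one must show not merely that the formal series \eqref{prop-seru} converges, but that its sum reproduces — on the nose, including the gauge/$R_t$-dressing — the analytic continuation of the propagator from $\la=\infty$. This requires careful bookkeeping of how $\Psi_t R_t(z)$ (which is regular in $z$ but only formal a priori) interacts with the $z$-expansion defining $\f_\be$, and a genuine convergence argument for the resulting resummation. This is precisely where the simplicity of the singularity — equivalently, that the monodromy is a finite reflection group and $R_t(z)$ has the requisite analytic properties for $ADE$ — enters, and it will presumably be handled in Section \ref{acpf} by the path-independence argument of Lemma \ref{path-independence}.
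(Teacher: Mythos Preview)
Your approach tries to work purely with local Laurent expansions of period vectors and the explicit $A_1$ formula from Proposition \ref{vanishing_a1}. The paper proceeds very differently: it introduces an \emph{integral representation} of the phase factor,
\[
B_{\al,\be}(t,\la;\xi) = \lim_{\ep\to 0}\exp\Bigl(\int_{-\ep\one}^{t-\la\one} I^{(0)}_\al(t',\xi)\bullet_{t'} I^{(0)}_\be(t',0)\Bigr),
\]
and obtains the propagator as $\d_\la\d_\mu\log B_{\al,\be}$. Part (a) is then immediate: if $(\al'|\be)=(\al''|\be)=0$ the integrand is analytic in a neighborhood of the discriminant point, so the integral---and hence the propagator---is analytic there. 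Part (b) amounts to showing that the same integral, started instead from a point near the critical value (yielding $\widetilde B_{\al,\be}$), differs from $B_{\al,\be}$ only by a multiplicative constant (Theorem \ref{t-ope}); this is where Lemma \ref{path-independence} and the braid-group/finite-reflection-group argument enter, together with a homogeneity argument.

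Your plan has a genuine gap in both parts, stemming from the same missing ingredient. In part (a), knowing that $I^{(k)}_{\al'}(t,\la)$ is single-valued near $\la=u$ is not enough: the propagator $P^k_{\al',\al''}(t,\la)$ is by definition the analytic continuation of a series from $\la=\infty$, not a locally defined bilinear expression in the periods. Your sentence about ``the $\d/\d u_i$-component being killed in the pairing'' is not correct---the $e_i$-components of $I^{(k)}_{\al'}$ need not vanish, and nothing cancels them; what you actually need is that the global object $P^k_{\al',\al''}$ coincides near $u$ with the (regular) local bilinear series, which is precisely the content of the integral representation. In part (b) you correctly identify the obstacle: matching the analytic continuation from $\infty$ with the local resummation. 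But the resummation argument you sketch (substituting Proposition \ref{vanishing_a1} and summing in $k$) does not by itself prove that this local sum equals the globally continued propagator; the $R_t(z)$ series is only formal, and the equality is a genuinely global statement. The paper closes this gap by comparing two line integrals with different basepoints and using Lemmas \ref{vanishing-period} and \ref{path-independence} to show their ratio is constant. You invoke Lemma \ref{path-independence} at the end, but without the integral representation there is nothing for it to act on.
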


Let $\F_\be\subset\F$ be the vertex subalgebra generated by $\be$. Then
\beq\label{fbeta}
\F_\be \cong \C[\be_{-1},\be_{-2},\be_{-3},\dots]
\eeq
is just the Fock space for the Heisenberg algebra
$\widehat{\C\be}$. 
Similarly, let
\beq\label{fbetaperp}
\F_\be^\perp = \{ a\in\F \,|\, \be_{(n)} a = 0 \,, \;\; n\geq0 \}
\eeq
be the Fock space for the Heisenberg algebra $\widehat{(\C\be)^\perp}$, which is a subalgebra of $\F$ commuting with $\F_\be$.
Note that we have an isomorphism
\beq\label{fbetaiso}
\F_\be^\perp\otimes\F_\be \cong \F \,, \quad a\otimes b \mapsto a_{(-1)}b \,.
\eeq
Recall that, by the Picard--Lefschetz formula (see Sect.\ \ref{mfib}), the monodromy operator associated to a small loop around $(t,u)$
is the reflection $r_\be\in W$.

\begin{theorem}\label{txtla2} The following statements hold{\rm:}
\begin{enumerate}
\item[(a)]
For\/ $a\in\F_\be^\perp$, the coefficients of\/ $X_t(a,\la)$ are holomorphic functions of\/ $(t,\la)$
in a neighborhood of\/ $(t,u)$.
\item[(b)]
The map
$
b\in \F_\be \mapsto Y_t^u(b,\la)
$
is an\/ $r_\be$-twisted representation of the vertex algebra\/
$\F_\be$ on\/ $\F_\hbar$.
\item[(c)]
For $a\in\F_\be^\perp \,, \; b\in\F_\be$, we have
\ben
Y_t^u(a_{(-1)} b, \la) = Y_t^u(a, \la) Y_t^u(b, \la) \, .
\een
\end{enumerate}
\end{theorem}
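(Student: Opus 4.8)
The plan is to reduce everything to the case of the $A_1$-singularity via \prref{vanishing_a1} and \thref{tprop2}, using the Wick formula \eqref{wick-1} as the bookkeeping device. For part (a), write $a = \al^1_{(-k_1-1)} \cdots \al^r_{(-k_r-1)}\vac$ with all $\al^i \in (\C\be)^\perp$, i.e.\ $(\al^i|\be)=0$. In the expansion \eqref{wick-1} every propagator factor $\d_\la^{(k_j)} P^{k_i}_{\al^i,\al^j}(t,\la)$ then has both indices orthogonal to $\be$, hence is holomorphic near $\la=u$ by \thref{tprop2}(a). The normally ordered factor involves $X_t(\al^l,\la) = \d_\la \widehat{\f}_{\al^l}(t,\la)$ for $\al^l \in (\C\be)^\perp$; these are holomorphic near $(t,u)$ because, by \prref{vanishing_a1}, the only source of ramification in $\f_\al(t,\la,z)$ near $\la=u$ is the component of $\al$ along the vanishing cycle $\be$ (the period vectors of cycles orthogonal to $\be$ extend holomorphically across $\la=u$ — this is the analogue near $u$ of the statement that $v^i$ with $\si v^i \neq v^i$ carry the ramification near $\infty$, and it follows from \leref{lem:periods} together with \prref{vanishing_a1}). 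So each summand in \eqref{wick-1} is a normally ordered product of holomorphic operator-valued functions times holomorphic scalar functions, and hence holomorphic near $(t,u)$.

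For part (b), I would first record the local structure near $\la=u$ of the generating field for $\F_\be$. By \prref{vanishing_a1}, $\f_\be(t,\la,z) = \Psi_t R_t(z)\sum_{k}(-z\d_\la)^k \frac{2e_i}{\sqrt{2(\la-u)}}$, so $\io_{\la-u} X_t(\be,\la)$ is a field with a square-root branch point at $\la=u$; its monodromy around $u$ multiplies $\be$ by $-1$, i.e.\ acts by $r_\be$ on $\C\be$. Moreover, by \thref{tprop2}(b) the propagator $P_{\be,\be}(t,\la;\mu-\la)$ has, near $\la=u$, the Laurent expansion \eqref{prop-seru} built from the same local period vectors. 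This is exactly the data needed to invoke \corref{ctwh} in the localized form: the operators $Y_t^u(\be,\la)$ satisfy the commutation relations of the $r_\be$-twisted Heisenberg algebra on $\C\be$ (the bracket is $t$-independent and equals $(\be|\be)\,\d_\mu(\la^{-1/2}\mu^{1/2}\delta(\la-u,\mu-u))$-type expression by the same symplectic argument as in \corref{ctwh}, now centered at $u$). Since $\F_\be$ is the Heisenberg vertex algebra on the one-dimensional space $\C\be$ with a degree-$2$ element $\be$ having $\si$-eigenvalue on which $r_\be$ acts by $-1$, the operators $Y_t^u(b,\la)$ for general $b\in\F_\be$ are obtained from $Y_t^u(\be,\la)$ by the product formula \eqref{locpr3}, exactly as in Sect.\ \ref{twheis}; this produces an $r_\be$-twisted $\F_\be$-module structure on $\F_\hbar$. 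The content of \thref{tprop2}(b) is precisely that the $\la^{-1}$-adic expansion at $\infty$ and the $(\la-u)$-adic expansion at $u$ of $X_t(a,\la)$ for $a\in\F_\be$ are analytic continuations of each other, so that \eqref{locpr3} may be applied with either expansion.

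For part (c), the point is that $X_t(a,\la)$ for $a\in\F_\be^\perp$ and $X_t(b,\la)$ for $b\in\F_\be$ have no singular terms in their operator product expansion: the relevant propagators $P^k_{\al^i,\be}$ vanish because $(\al^i|\be)=0$, so in the Wick formula \eqref{wick-1} applied to $a_{(-1)}b$ there are no contractions between the $\al^i$ and $\be$, and the collection $J$ factors into a part internal to $a$ and a (necessarily empty, for the $\be$'s are never contracted) part. Hence $\io_{\la-u}X_t(a_{(-1)}b,\la) = {:}\,\io_{\la-u}X_t(a,\la)\cdot\io_{\la-u}X_t(b,\la)\,{:}$, and since $X_t(a,\la)$ is already holomorphic near $\la=u$ by part (a), the normal ordering with the $\F_\be$-field produces no further contractions — so the normally ordered product equals the ordinary product $Y_t^u(a,\la)Y_t^u(b,\la)$. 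This is also consistent with the abstract statement that for the decomposition \eqref{fbetaiso}, $Y(a_{(-1)}b,\la) = Y(a,\la)Y(b,\la)$ whenever $Y(a,\la)$ and $Y(b,\la)$ are mutually local with no singular OPE.

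The main obstacle I anticipate is part (b): verifying that the locally defined field $\io_{\la-u}X_t(\be,\la)$ together with the locally defined propagator $\io_{\la-u}P_{\be,\be}$ really do satisfy the axioms of an $r_\be$-twisted $\F_\be$-module. Formally this should follow from \prref{pnprod} and \thref{tprop2}(b) by the same argument that gives \corref{ctwh} near $\infty$, but one must be careful that the product formula \eqref{locpr3} is being applied in the $(\la-u)$-adic topology (as the paper's introduction warns, the $n$-th products are a priori only local objects near each singular point), and that the scalar $U_\be$-type constants and the normalization $B_{\be,\be}$ come out correctly so that the $r_\be$-twisted vertex operator relation \eqref{twlat11}–\eqref{twlat12} holds verbatim with $h$ replaced by $2$ (the Coxeter number of $A_1$). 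Establishing that the local period data near $u$ matches the $A_1$ twisted vertex operator data on the nose — rather than merely up to a scalar — is where \prref{vanishing_a1} and \corref{int-res} (which pins down the proportionality constant to $1$ by degenerating to $A_1$) will have to be used in tandem.
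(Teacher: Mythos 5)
Your parts (a) and (b) are correct and follow essentially the same route as the paper. For (a) the paper phrases the regularity of $X_t(\al,\la)$ with $(\al|\be)=0$ via invariance under the local monodromy $r_\be$ (Picard--Lefschetz) rather than via \prref{vanishing_a1}, but this is the same underlying fact, and the reduction of general $a\in\F_\be^\perp$ to the generators via \eqref{wick-1} and \thref{tprop2}(a) is identical. For (b) the paper likewise computes the commutator of the generating fields by reducing to the $A_1$ period series through \leref{lphihat} and \prref{vanishing_a1}, builds the abstract $r_\be$-twisted module generated by $Y_t^u(\be,\la)$, and then identifies it with $\io_{\la-u}X_t(\cdot,\la)$ by matching propagators using \thref{tprop2}(b) --- exactly the plan you describe, including your (correctly placed) concern that \thref{tprop2}(b) is what pins down the local normalization.

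Part (c), however, rests on a false premise. You assert that the propagators $P^{k}_{\al^i,\be}(t,\la)$ vanish because $(\al^i|\be)=0$, so that no contractions occur between the $a$-part and the $b$-part. Only the coefficient of $\xi^{-2}$ in the expansion \eqref{propag} equals $(\al|\be)$; the regular Taylor coefficients $P^{k}_{\al,\be}(t,\la)$, $k\geq0$, are given by \eqref{prop-seru} as bilinear expressions in the period vectors and are generically nonzero for orthogonal cycles at $t\neq0$ (the clean proportionality to $(\al|\be)$ in \exref{eprop} holds only at $t=0$ and for $\si$-eigenvectors; conjugation by $\S_t(z)$ does not commute with the $\pm$ truncations). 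Indeed, \thref{tprop2}(a) asserts that these coefficients are \emph{analytic} near $\la=u$, not that they are zero --- the statement would be vacuous otherwise. Consequently both of your intermediate claims fail: $Y_t^u(a_{(-1)}b,\la)$ is \emph{not} the pure normally ordered product, and the ordinary product $Y_t^u(a,\la)Y_t^u(b,\la)$ differs from the normally ordered one precisely by the nonzero cross-contractions. The correct argument --- and the one the paper gives --- is that these cross terms appear on \emph{both} sides and match: expanding $Y_t^u(a_{(-1)}b,\la)$ by \eqref{wick-1} yields a sum over contractions including mixed pairs $(\al^i,\be)$ weighted by $\io_{\la-u}P^{k_i}_{\al^i,\be}$, while expanding the product $Y_t^u(a,\la)\,Y_t^u(b,\la)$ by the Wick formula for products of normally ordered expressions (and invoking \thref{tprop2}(b) to identify the propagators arising there with the same $\io_{\la-u}P^{k_i}_{\al^i,\be}$) produces exactly the same sum. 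The identity holds by this term-by-term matching, not by the absence of mixed contractions.
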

\begin{proof}
(a)
If $\al\in\lieh$ is such that $(\al|\be)=0$, the coefficients of
$X_t(\al,\la)$ are invariant with respect to the local monodromy
$r_\be$, so they must be holomorphic functions of $(t,\la)$
in a neighborhood of $(t,u)$. The statement for $a\in\F$ then follows from the
definition \eqref{wick-1} of $X_t(a,\la)$ and \thref{tprop2} (a).

(b)
Note that 
\ben
[Y_t^u(\beta,\mu),Y_t^u(\beta,\la)] =
\io_{\mu-u}\io_{\la-u}
\,
\Omega(\d_\mu\f^{A_1}(u,\mu,z), \d_\la\f^{A_1}(u,\la,z)) \,,
\een
where we used \leref{lphihat} and \prref{vanishing_a1} and we denoted
by 
\beq\label{fa1}
\f^{A_1}(u,\la,z) = \pm 2 \sum_{k\in \Z} (-z\d_\la)^k (2(\la-u))^{-1/2}
\eeq
the period series for an $A_1$-singularity. It follows that the
coefficients of $Y_t^u(\be,\la)$ satisfy the $r_\be$-twisted Heisenberg
relations. Let us denote by 
\ben
b\mapsto \widetilde{Y}_t^u(b,\la)\,,\qquad b\in \F_\be
\een 
the $r_\beta$-twisted representation generated by $Y_t^u(\be,\la).$
By the same argument as in the proof of 
\prref{dxtal}, we can express $\widetilde{Y}^u_t(b,\la)$ in terms of the generating
fields $Y_t^u(\be,\la)$ and some propagators
$P_{\be,\be}^{u,k}(t,\la)$. According to \thref{tprop2} (b), we have
\ben
P_{\be,\be}^{u,k}(t,\la)=\io_{\la-u}\, P_{\be,\be}^{k}(t,\la)\,,
\een 
which implies that
\ben
\widetilde{Y}_t^u(b,\la) = \io_{\la-u}\, X_t(b,\la) = Y_t^u(b,\la) \, .
\een

(c)
We can assume that
\begin{align*}
a &= \al^1_{(-k_1-1)} \cdots \al^{r}_{(-k_{r}-1)} \vac \,, \qquad
r\geq1 \,, \; \al^i\in(\C\be)^\perp \,, \; k_i \geq0 \,,
\\
b &= \be_{(-m_1-1)} \cdots \be_{(-m_{p}-1)} \vac \,, \qquad
p\geq1 \,, \; m_i \geq0 \,.
\end{align*}
Then, by \eqref{fbetaiso},
\ben
a_{(-1)} b = \al^1_{(-k_1-1)} \cdots \al^{r}_{(-k_r-1)} 
\be_{(-m_1-1)} \cdots \be_{(-m_{p}-1)} \vac \,.
\een
Using \eqref{wick-1},
one can express $Y_t^u(a, \la)$, $Y_t^u(b, \la)$ and $Y_t^u(a_{(-1)} b, \la)$ in terms of normally ordered products and propagators.

Then to compute $Y_t^u(a, \la) Y_t^u(b, \la)$, it is enough to compute all products of the type
$A_{J'}(\la)B_{J''}(\la)$, where
\ben
A_{J'}(\la) = {:} \Bigl( \prod_{i\in J'} \d_\la^{(k_i)} Y_t^u(\al^i, \la) \Bigr) {:} \,, \quad 
B_{J''}(\la) = {:} \Bigl( \prod_{j\in J''} \d_\la^{(m_j)} Y_t^u(\be, \la) \Bigr) {:} 
\een
for
\ben
J' \subset\{1,\dots,r\} \,, \qquad J'' \subset\{1,\dots,p\} \,.
\een
The product $A_{J'}(\la)B_{J''}(\la)$ is computed using the Wick formula (see e.g.\ \cite{K2}, Theorem 3.3)
and \thref{tprop2} (b):
\ben
A_{J'}(\la)B_{J''}(\la) = \sum_{I} \, \Bigl( \prod_{ (i,j)\in I } \d_\la^{(m_j)} \io_{\la-u} P^{k_i}_{\al^i,\be}(t,\la) \Bigr)
C_I(\la) \,.
\een
Here the sum is over all collections $I$ of disjoint ordered pairs $(i,j)$ such that
$i\in J'$, $j\in J''$, and
\ben
C_I(\la) =
\; {:} \Bigl( \prod_{l\in I'} \d_\la^{(k_l)} Y_t^u(\al^l, \la)  \prod_{n\in I''} \d_\la^{(m_n)} Y_t^u(\be, \la) \Bigr) {:} \,,
\een
where 
\ben
I'=J'\setminus\{i \,|\, (i,j) \in I\} \,, \qquad I''=J''\setminus\{j \,|\, (i,j) \in I\} \,.
\een

It is not hard to see that the combinatorics of the Wick formula and  formula \eqref{wick-1} produce exactly the
identity we claim.
\end{proof}

\begin{remark}\label{rxtla2}
It is not true that the Laurent expansions of all\/ $X_t(a,\la)$ near\/ $\la=u$ give a twisted representation of\/ $\F$. Indeed, 
for\/ $\al,\al'\in\lieh$ such that\/ $(\al|\be)=(\al'|\be)=0$,
the Laurent expansions of\/ $X_t(\al,\la)$ and\/ $X_t(\al',\la)$
have only non-negative powers of\/ $\la-u$.
Thus, they cannot satisfy the commutation relations of the Heisenberg algebra (cf.\ \eqref{twheis4}).
\end{remark}

\subsection{Action on tame vectors}\label{sactam}
So far we have considered the action of $X_t(a,\la)$ on elements of $\F_\hbar$, i.e., on polynomials in $\q$ (see \eqref{fhbar}). Now we want to consider a certain completion of $\F_\hbar$. Note that for an arbitrary formal power series $v\in\C_\hbar[[\q]]$, the series $X_t(a,\la)v$ has divergent coefficients in general. 
We claim that if $v$ is a tame asymptotical function (cf. Sect. \ref{asop}), then $X_t(a,\la)v$ is a formal power series
whose coefficients are formal Laurent series in $\hbar^{1/2}$ with
coefficients finite linear combinations of the coefficients of
$X_t(a,\la).$ 


Using the natural multi-index notations, we can write
\beq\label{vecfock}
v(\hbar,\q)=\sum_{g,I} v^{(g)}_I \, \hbar^{g-1}\, \q^I \,, \qquad
I=\{i_k^l\}_{k=0,1,2,\dots}^{l=1,\dots,N} \,.
\eeq
By definition, $v$ is \emph{tame} if $3g-3+\ell(I)<\ell_z(I)$ implies that
$v^{(g)}_I=0$, where
\ben
\ell(I):=\sum_{k=0}^\infty \sum_{l=1}^N i_k^l \,,\qquad 
\ell_z(I):=\sum_{k=0}^\infty \sum_{l=1}^N k\,i_k^l \,.
\een
If we write
\beq\label{opser}
X_t(a,\la) = \sum_{I,J} \hbar^{(\ell(J)-\ell(I))/2} a_{I,J} (t,\la) \q^I \d_\q^J \,,
\eeq
then $X_t(a,\la)v$ is a formal series of the type \eqref{vecfock}
whose coefficients $\widetilde{v}^{(g)}_I$ are given by
\ben
\sum_{I',I''\,:\,I'+I'' = I} \Bigl(\sum_J
C^{I',I''}_J a_{I',J}(t,\la) v^{(g'')}_{I''+J}\Bigr) \,,
\een
where 
\ben
g''+ \frac{1}{2}(\ell(J)-\ell(I')) = g
\een
and the precise values of the combinatorial coefficients $C^{I',I''}_J\in\Z$ are irrelevant.
The first sum is always finite for a fixed $I$, while in the second one
the non-zero terms are parameterized by $J$ such that
\ben
3g''-3+\ell(I'')+\ell(J)\geq \ell_z(I'')+\ell_z(J)\,,
\een 
i.e.,
\ben
\frac{1}{2} \ell(J) + \ell_z(J) \leq 3g-3 +\frac{3}{2}\ell(I') + \ell(I'')-\ell_z(I'')\,.
\een
For fixed $g$ and $I$, there are only finitely many $J$ satisfying the
above inequality, which proves our claim.

Finally, let us point out that the formal composition of two
operators $X_t(a,\mu)$ and $X_t(b,\la)$ is a formal differential
operator whose coefficient in front of $\hbar^{(\ell(J)-\ell(I))/2}\q^I\d_\q^J$ is 
\beq\label{comp}
\sum_{I'+I'' = I} \sum_{J'+J''=J} \Bigl(\sum_K
C^{I',I'',J',J''}_K a_{I',J'+K}(t,\mu) b_{I''+K,J''}(t,\la)\Bigr)
\eeq
for some $C^{I',I'',J',J''}_K\in\Z$. 
The first two sums are always finite for
fixed $I$ and $J$. However, the sum over $K$ is infinite and so the
product can be defined only if the series is convergent in an
appropriate sense. As we saw in Sect.\ \ref{spnop} and \ref{sxtal}, this can be done if we use the Laurent series expansions at $\infty$ or $u=u_i(t)$.

Indeed, near $\infty$, the sum over $K$ in \eqref{comp} is convergent in the $\mu^{-1}$-adic topology, since
$a_{I,J}(t,\mu) \to 0$ uniformly as $J\to\infty$.
This is because $I^{(k)}_\al(t,\mu) \to 0$ as $k\to+\infty$.

Similarly, we will say that $b\in \F$ is {\em singular at} $\la=u$ if
$b_{I,J}(t,\la)\to 0$ uniformly in the $(\la-u)$-adic topology as $I\to\infty$.
Then \eqref{comp} is convergent in the $(\la-u)$-adic topology. By \thref{txtla2} (a) and \prref{vanishing_a1},  $\al\in \lieh\subset\F$ is singular at $\la=u$ if and only if $\al\in\C\beta$, where $\be$ is a cycle vanishing over $(t,u)$. 
It follows from \eqref{wick-1} and \thref{tprop2} (b) that all $b\in\F_\be$ are singular at $\la=u$.

\comment{
\subsection{The Heisenberg group}\label{heis:gp}
According to \prref{dxtal} a $\si$-twisted Heisenberg module is
determined by a representation of the $\si$-twisted Heisenberg algebra and certain
functions which we called propagators. In this section we would like
to discuss how the propagators transform under conjugation. 

Our main interest is in a family of series 
\ben
\f_\al(\la,z)=\sum_{n\in \Z} I_\al^{(n)}(\la)(-z)^n,\quad \al\in Q,
\een
where $I^{(n)}_\al(\la)$ are functions in $\la$ such that
$I_\al^{(n+1)}=\d_\la I^{(n)}_\al$ for every $n\in \Z.$
In our settings these functions are the periods of a hypersurface
singularity and $Q$ is the Milnor lattice. However, in this
section we work formally, so $Q$ could be any set and the functions
will be viewed as fomal Laurent series near some point $\la=u$ on the
extended complex plane $\C P^1 = \C\cup \{\infty\}$.

The operators 
\ben
\Gamma_\al(\la) = \exp \Big(\f_\al(\la,z)_-\Big)\sphat\ \exp\Big(\f_\al(\la,z)_+\Big)\sphat.
\een 
are called {\em vertex operators}. They form
a group called the {\em Heisenberg group}. Let us point out that the
product of two vertex operators has the form
\beq
\Gamma_\al(\mu) \Gamma_\beta(\la) = B_{\al,\beta}(\la,\mu) \,
{:}\Gamma_\al(\mu) \Gamma_\beta(\la){:}\ ,
\eeq
where $B_{\al,\beta}$ is the so called {\em phase factor}. Note that
(see \leref{lphihat}) 
\ben
B_{\al,\beta}(\mu,\la)  = \exp\Big( \Omega(\f_\al(\mu,z)_+, \f_\be(\la,z)_-)\Big).
\een
For our purposes, we need also the auxiliarly functions
\ben
P_{\al,\be}(\mu,\la) = \d_la\d_\mu \, \Omega(\f_\al(\mu,z)_+, \f_\be(\la,z)_-)
\een
which we call the {\em propagators}.

Let us assume that $S(z)$ is a symplectic tranformation of the
following type:
\ben
S(z) = 1+ S_1z^{-1} +S_2z^{-2}+\cdots,\quad S_k\in {\rm End}(H); 
\een
then using formula \eqref{S:fock} it is easy to obtain that the
conjugation of a vertex operator by $\widehat{S}^{-1}$ is given by the
following formula:
\beq\label{sconj}
\widetilde{\Gamma}_\al(\mu) := 
\widehat{S} \,\Gamma_\al(\mu)\, \widehat{S}^{-1} =
e^{W((\f_\al(\mu,z)_+,\f_\al(\mu,z)_+)/2} :\Big(e^{(S\f_\al
  (\mu,z))}\Big)\sphat:\ ,
\eeq
where $W$ is the quadratic form on $H[z]$ appearing in
\eqref{S:fock}. Conjugation by $S$ is well defined in the formal
$\la^{-1}$-adic topology, while the product formula makes sense in
the space 
\ben
\C_\hbar(\!(\la^{-1})\!)(\!(\mu^{-1})\!).
\een
The phase factor of
the vertex operators $\widetilde{\Gamma}_\al(\mu)$ and
$\widetilde{\Gamma}_\beta(\la)$  is
\ben
\widetilde{B}_{\al,\beta}(\mu,\la) = \exp\Big(\Omega((S\f_\al(\mu,z))_+, (S\f^\beta(\la,z))_-)\Big),
\een
while the corresponding propagator is
\beq\label{sprop}
\widetilde{P}_{\al,\beta}(\mu,\la)  = \d_\mu\d_\la\, \Omega((S\f_\al(\mu,z))_+, (S\f^\beta(\la,z))_-).
\eeq

Let us assume that $R(z)=1+R_1z+R_2z^2+\cdots$ is a
symplectic transformation; then conjugation by $R$ is given by a
similar formula
\beq\label{rconj}
\widetilde{\Gamma}^u_\al(\mu):=\widehat{R}^{-1} \,\Gamma_\al(\mu)\, \widehat{R} =
e^{V\f_\al(\mu,z)_-^2/2} :\Big(e^{(R^{-1}\f_\al(\mu,z))}\Big)\sphat :\
, 
\eeq
where $\f_\al(\gl,z)_-$ is identified with the linear function
$\Omega(\phi(\gl,s)_-,\cdot )$ and $V$ is the second order differential
operator appearing in \leref{lrfock}. The superscript $u$ is a fixed
point in $\C P^1$ ($u\neq \infty$) and it indicates that we are
working in the formal $(\mu-u)$-adic topology. In order for the conjugation by $R$ to make
sense it is enough to require that $I^{(n)}_\al(\mu)\to 0$ as $n\to
+\infty$, so when conjugating by $\widehat{R}$ we assume that the
vertex operators satisfy this condition.  The product formula
\eqref{vop-ope} makes sense in the space
\ben
\C_\hbar(\!(\mu-u)\!)(\!(\la-u)\!)
\een
and we get that the phase factor of 
$\widetilde{\Gamma}_\al^u(\mu)$ and $\widetilde{\Gamma}_\beta^u(\la)$ is
\ben
\widetilde{B}_{\al,\beta}^u(\mu,\la) = \exp\Big(\Omega((R^{-1}\f_\al(\mu,z))_+,(R^{-1}\f_\beta(\la,z)_-)\Big),
\een
while the corresponding propagator is
\beq\label{rprop}
\widetilde{P}_{\al,\beta}(\mu,\la)  = \d_\mu\d_\la\, \Omega((R^{-1}\f_\al(\mu,z))_+,(R^{-1}\f_\beta(\la,z)_-)\,.
\eeq
\bigskip
}

\subsection{Intertwining operators}

Recall that the calibration operator $\mathcal{S}_t$ of the
singularity gives an isomorphism between two different completions of
the Fock space $\F_\hbar=\C_\hbar[\q]$ (see Sect. \ref{cali}):
\ben
\widehat{\S}_t^{-1} \colon \C_\hbar[[q_0,q_1+\one,q_2,\dots]]\to \C_\hbar[[q_0-\tau,q_1+{\bf 1},q_2,\dots]] \,.
\een
Using the operator series 
\ben
Y(a,\la)=\io_\la X_0(a,\la)=Y_0^\infty(a,\la)\,,\qquad a\in \F\,,
\een 
we put the structure of a $\si$-twisted $\F$-module on the completion
$\C_\hbar[[q_0-\tau,q_1+\one,q_2\dots]]$. The other completion is
equipped with the structure of a $\si$-twisted $\F$-module via the
operator series $Y_t^\infty(a,\la)$, $a\in \F$. 
\begin{lemma}\label{S}
The map\/ $\widehat{\S}_t$ is a homomorphism of\/ $\si$-twisted\/ $\F$-modules,
i.e., 
\ben
Y^\infty_t(a,\la) = \widehat{\S}_t\,Y(a,\la)\, \widehat{\S}^{-1}_t \,,\qquad 
a\in \F\,.
\een
\end{lemma}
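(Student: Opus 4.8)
The plan is to establish the intertwining property on the generators $a\in\lieh\subset\F$ first, and then bootstrap to all of $\F$ using the normally ordered product expansion \eqref{wick-1} together with the conjugation behavior of the propagators. For the generators, recall that by definition $X_t(\al,\la)=\d_\la\widehat\f_\al(t,\la)$, and by Proposition \ref{translation} we have $\mathcal{S}_t(z)\,\f_\al(0,\gl,z)=\f_\al(t,\gl,z)$ for $\gl$ near $\infty$. Since quantization is functorial with respect to the symplectic group in the sense that $\widehat{Sg}\cdot\widehat\phi\cdot\widehat{Sg}^{-1}=\widehat{Sg(\phi)}$ whenever $Sg$ is a symplectic transformation and $\phi\in\H$ — this follows from \leref{lphihat} together with the standard fact that conjugation by the quantization of a symplectic transformation realizes its linear action — applying $\d_\la$ and quantizing gives
\ben
\widehat{\S}_t\,X_0(\al,\la)\,\widehat{\S}_t^{-1} = \widehat{\S}_t\,\d_\la\widehat\f_\al(0,\la)\,\widehat{\S}_t^{-1}
= \d_\la\widehat{\bigl(\S_t\f_\al(0,\la)\bigr)} = \d_\la\widehat\f_\al(t,\la) = X_t(\al,\la)\,.
\een
Taking Laurent expansions at $\la=\infty$ on both sides, and noting that $\io_\la X_0(\al,\la)=Y_0^\infty(\al,\la)=Y(\al,\la)$ by \leref{ltwh}, while $\io_\la X_t(\al,\la)=Y_t^\infty(\al,\la)$ by \eqref{txtla}, we obtain the claimed identity for $a=\al\in\lieh$.

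Next I would propagate this to all of $\F$ by induction using the product formula. Since $\F$ is strongly generated by $\lieh$, every $a\in\F$ is a linear combination of elements of the form $\al^1_{(-k_1-1)}\cdots\al^r_{(-k_r-1)}\vac$. The key structural input is the Wick-type formula \eqref{wick-1}, which expresses $X_t(a,\la)$ as a sum over pair-partitions $J$ of products of derivatives of propagators $P^{k_i}_{\al^i,\al^j}(t,\la)$ times normally ordered products of the generating operators $X_t(\al^l,\la)$. The generators transform correctly by the previous paragraph: $\widehat\S_t\,X_0(\al^l,\la)\,\widehat\S_t^{-1}=X_t(\al^l,\la)$. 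Conjugation by $\widehat\S_t$ of a normally ordered product of such operators is again a normally ordered product, up to a correction that, by \leref{lsfock} and \leref{lphihat}, is itself a sum of contractions — precisely of the form produced by the Wick combinatorics. So the claim reduces to checking that the scalar propagators are $\widehat\S_t$-covariant, i.e., that the Laurent expansion near $\la=\infty$ of $P_{\al,\be}(t,\la;\xi)$ is obtained from $P_{\al,\be}(0,\la;\xi)$ by the same conjugation. But this is automatic: by \eqref{propser} the propagator is built from the commutator $[\d_\mu\widehat\f_\al(t,\mu)_+,\d_\la\widehat\f_\be(t,\la)_-]$, which equals $\Om$ applied to $\d_\mu\f_\al(t,\mu,z)$ and $\d_\la\f_\be(t,\la,z)$; since $\S_t$ is symplectic and $\f_\al(t,\la,z)=\S_t(z)\f_\al(0,\la,z)$ near $\infty$, this commutator is independent of $t$ near $\infty$, hence so are the coefficients $P^{\infty,k}_{\al,\be}(t,\la)$ in the $(\mu-\la)$-expansion, and the conjugation acts trivially on them — consistent with the fact that $\widehat\S_t$ acts on the scalar (non-operator) part only through the quadratic form $W$, whose contribution is already accounted for by the contraction terms.

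The one point requiring care — and the main obstacle — is making the manipulation of normally ordered products under conjugation precise on the appropriate completion of $\F_\hbar$, since $\widehat\S_t^{-1}$ genuinely changes the completion (Lemma \ref{lstiso}) and the operators $X_t(a,\la)$ have coefficients that are formal Laurent series in $\la^{-1}$ rather than honest functions. I would handle this by applying both sides to a tame asymptotical vector $v$ (such as the total ancestor potential, after the intertwining is set up) and invoking Sect.\ \ref{sactam}, which guarantees that $X_t(a,\la)v$ has well-defined coefficients and that the formal composition $\widehat\S_t\,Y(a,\la)\,\widehat\S_t^{-1}v$ makes sense in the $\la^{-1}$-adic topology. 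The identity is then an equality of formal power series with coefficients that are finite linear combinations; since it holds on generators and both sides satisfy the same recursion coming from \prref{pnprod} (the product formula $Y(a_{(-k-1)}b,\la)$ being determined by $Y(a,\mu)$ and $Y(b,\la)$), it propagates to all $a\in\F$ by induction on the number of generating factors. Alternatively, one can phrase the whole argument as the statement that $\widehat\S_t$ intertwines two twisted $\F$-module structures, and invoke \reref{rnprod}: the product identity plus locality determine a twisted representation, and both are preserved by the conjugation.
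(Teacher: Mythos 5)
Your overall strategy is the paper's: establish the identity on the generators $\al\in\lieh$ using \prref{translation} and the functoriality of quantization, then reduce the general case to a comparison of propagators via the Wick formula \eqref{wick-1}. The generator step is fine. The gap is in the propagator comparison, and it is a genuine error rather than a presentational one. You assert that the contraction $[\d_\mu\widehat\f_\al(t,\mu)_+,\d_\la\widehat\f_\be(t,\la)_-]$ "equals $\Om$ applied to $\d_\mu\f_\al(t,\mu,z)$ and $\d_\la\f_\be(t,\la,z)$" and is therefore independent of $t$. This conflates the contraction with the full commutator. Only the full commutator $[X_t(\al,\mu),X_t(\be,\la)]=\Om(\d_\mu\f_\al(t,\mu,z),\d_\la\f_\be(t,\la,z))$ is $t$-independent (because $\S_t$ is symplectic); the contraction pairs only the $+$ part against the $-$ part, and the splitting into $\pm$ parts does not commute with multiplication by $\S_t(z)$. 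Indeed the propagators $P^{\infty,k}_{\al,\be}(t,\la)$ depend on $t$ in an essential way: by \eqref{phase-form} one has $d^B\Om(\f_\al(t,\mu,z)_+,\f_\be(t,\la,z)_-)=I^{(0)}_\al(t,\mu)\bullet_t I^{(0)}_\be(t,\la)\neq0$, and the whole of \thref{tprop1} and Section \ref{acpf} is about this $(t,\la)$-dependence. Your paragraph is also internally inconsistent: you simultaneously claim that conjugating the normally ordered product produces nonzero contraction corrections and that the propagators at $0$ and at $t$ coincide; the correction is precisely $P^\infty_{\al,\be}(t,\mu,\la)-P^\infty_{\al,\be}(0,\mu,\la)$, so the two claims cannot both hold.

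The correct comparison is simpler than what you wrote. Since $\widehat\S_t\,\widehat\phi\,\widehat\S_t^{-1}=\widehat{\S_t\phi}$, the conjugated generating field is the quantization of $\d_\mu\bigl(\S_t(z)\f_\al(0,\mu,z)\bigr)$, and its contraction against the other conjugated field is
\ben
\d_\mu\d_\la\,\Om\bigl((\S_t(z)\f_\al(0,\mu,z))_+,\,(\S_t(z)\f_\be(0,\la,z))_-\bigr)\,,
\een
where the projections $\pm$ are taken \emph{after} applying $\S_t(z)$. By \prref{translation} this is exactly $\d_\mu\d_\la\,\Om(\f_\al(t,\mu,z)_+,\f_\be(t,\la,z)_-)=P^\infty_{\al,\be}(t,\mu,\la)$, i.e., the propagator defining $Y_t^\infty$ — no $t$-independence is needed or true. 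With that substitution your induction on the number of generating factors (via \prref{pnprod} and \prref{dxtal}) closes, and the remarks in your last paragraph about completions are a reasonable, if not strictly necessary, supplement.
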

\begin{proof}
Using \leref{lsfock} and \prref{translation}, we see that the above
equation holds for all $a=\al\in
\lieh\subset\F$. Therefore, due to the construction via the Wick
formula (see \prref{dxtal}), it is enough to compare the
propagators. 
The propagator of $\widehat{\S}_t\,Y(\al,\mu)\,
\widehat{\S}^{-1}_t$ and $\widehat{\S}_t\,Y(\be,\la)\,
\widehat{\S}^{-1}_t$ is
\ben
\d_\la\d_\mu\,
\Omega\bigl((\S_t(z)\f_\al(0,\mu,z))_+, (\S_t(z)\f_\beta(0,\la,z))_-\bigr)\,.
\een
By \prref{translation}, this is precisely $P^\infty_{\al,\be}(t,\mu,\la)$.
\end{proof}

Now let $\F^{\rm tame}_\hbar$ be the space of \emph{tame} series in $\C_\hbar[[q_0,q_1+\one,q_2,\dots]]$, as defined in Sect.\ \ref{asop} and \ref{sactam}.
Then the asymptotical operator $\widehat{\Psi}_t\widehat{R}_t$ gives an injection (see Sect.\ \ref{asop})
\beq\label{asop-2}
\widehat{\Psi}_t\widehat{R}_t\colon \C_\hbar[[Q_0, Q_1+(1,\dots,1),Q_2,\dots ]]^{\rm tame} \rightarrow
\F^{\rm tame}_\hbar \,.
\eeq
Let us assume that $t\in B$ is generic, $\la$ is close to one
of the critical values $u:=u_i(t)$, and $\be$ is a cycle
vanishing over $(t,u)$. By \thref{txtla2} (b), the space
$\F_\hbar^{\rm tame}$ of tame vectors is an $r_\be$-twisted $\F_\be$-module. 

On the other hand, applying our construction from Sect. \ref{sperep} in the case of an $A_1$-singularity, we get that the operator series 
\ben
Y^{A_1}_u(\be,\la) = \f^{A_1}(u,\la,z)\sphat
\een
(see \eqref{fa1})
induces the structure of an $r_\be$-twisted $\F_\be$-module on the space
$\C_\hbar[[Q_0^i,Q_1^i+1,Q_2^i,\dots]]^{\rm tame}$.

\begin{lemma}\label{R}
The operator\/ \eqref{asop-2} is a homomorphism of\/ $r_\be$-twisted
$\F_\be$-modules, i.e., 
\ben
\widehat{\Psi}_t\widehat{R}_t \, Y^{A_1}_u(b,\la) = Y^u_t(b,\la) \, \widehat{\Psi}_t\widehat{R}_t
 \,,\qquad b\in \F_\be\,.
\een
\end{lemma}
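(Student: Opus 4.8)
The plan is to follow the same strategy as in the proof of \leref{S}, reducing the statement to a check on the generators of $\F_\be$ together with a comparison of propagators. Since $\F_\be$ is generated by $\be$, and since both sides of the claimed identity are expressed via the Wick formula (\prref{dxtal}) in terms of the generating field and propagators, it suffices to verify two things: first, that the operator \eqref{asop-2} intertwines $Y^{A_1}_u(\be,\la)$ with $Y^u_t(\be,\la)$; and second, that the propagator built from $Y^{A_1}_u(\be,\cdot)$, once conjugated by $\widehat{\Psi}_t\widehat{R}_t$, matches the propagator $P^{u,k}_{\be,\be}(t,\la) = \io_{\la-u}P^k_{\be,\be}(t,\la)$ appearing in the normally-ordered expansion of $Y^u_t(\be,\la)$.

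First I would establish the generating-field identity. By \prref{vanishing_a1}, for a cycle $\be$ vanishing over $(t,u_i(t))$ we have $\f_\be(t,\la,z) = \Psi_t R_t(z)\,\f^{A_1}(u,\la,z)$ near $\la=u$, where $\f^{A_1}$ is the $A_1$-period series \eqref{fa1}. Quantizing and using that $\widehat{\Psi}_t\widehat{R}_t$ conjugates quantized linear Hamiltonians according to the symplectic transformation $\mathcal{R}_t=\Psi_t R_t\Psi_t^{-1}$ (Sect.\ \ref{squant}, \leref{lrfock}), this gives $\widehat{\Psi}_t\widehat{R}_t\,Y^{A_1}_u(\be,\la) = Y^u_t(\be,\la)\,\widehat{\Psi}_t\widehat{R}_t$ on tame vectors. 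Here one must be careful that the conjugation by $\widehat{\mathcal{R}}_t$ is only defined in the $(\la-u)$-adic topology, but this is exactly the topology in which $Y^u_t$ lives, and the action is well defined on $\F^{\rm tame}_\hbar$ by the discussion in Sect.\ \ref{asop} and \ref{sactam} (note $\be$ is singular at $\la=u$, so compositions converge $(\la-u)$-adically).

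Next I would compare the propagators. The propagator of the conjugated fields $\widehat{\Psi}_t\widehat{R}_t\,Y^{A_1}_u(\be,\mu)\,(\widehat{\Psi}_t\widehat{R}_t)^{-1}$ and the analogous one in $\la$ is, by the same computation as in the proof of \leref{S} (using \leref{lphihat}), equal to
\ben
\d_\la\d_\mu\,\Omega\bigl((\mathcal{R}_t(z)\f^{A_1}(u,\mu,z))_+, (\mathcal{R}_t(z)\f^{A_1}(u,\la,z))_-\bigr)
= \d_\la\d_\mu\,\Omega\bigl((\f_\be(t,\mu,z))_+, (\f_\be(t,\la,z))_-\bigr),
\een
where I again used \prref{vanishing_a1}. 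By \thref{tprop2} (b), the Laurent expansion of this in the domain $r_i(t)>|\mu-u|>|\la-u|>0$ is precisely $\io_{\la-u}P_{\be,\be}(t,\la;\mu-\la)$, whose Taylor coefficients are $\io_{\la-u}P^k_{\be,\be}(t,\la)$. These are the same propagators that enter the Wick-type formula \eqref{wick-1} for $Y^u_t(b,\la)$, as established in the proof of \thref{txtla2} (b). Since the two $r_\be$-twisted $\F_\be$-module structures agree on the generator $\be$ and have matching propagators, the Wick formula (\prref{dxtal}) forces them to agree on all of $\F_\be$, proving the lemma.

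The main obstacle I anticipate is not any single computation but rather the bookkeeping of topologies: one has to check at each step that the conjugation by $\widehat{\Psi}_t\widehat{R}_t$ and the composition of twisted fields are simultaneously well defined, i.e., that everything converges in the $(\la-u)$-adic (and $(\mu-u)$-adic) sense on the space of tame vectors. This is exactly what Sect.\ \ref{sactam} was set up to handle, and the key input is that every $b\in\F_\be$ is singular at $\la=u$, so the relevant sums \eqref{comp} truncate. Once this is in place, the algebraic content of the proof is a direct transcription of the argument for \leref{S}, with $\mathcal{S}_t$ replaced by $\mathcal{R}_t=\Psi_t R_t\Psi_t^{-1}$ and $\f_\al(0,\la,z)$ replaced by the $A_1$-period series $\f^{A_1}(u,\la,z)$.
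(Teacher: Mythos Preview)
Your proposal is correct and follows essentially the same route as the paper: verify the intertwining relation on the generator $\be$ via \prref{vanishing_a1} and the quantization formalism, then invoke the Wick formula (\prref{dxtal}) to reduce the general case to a comparison of propagators, which again matches by \prref{vanishing_a1}. One small notational slip: in your propagator computation you write $\mathcal{R}_t(z)\f^{A_1}$, but $\mathcal{R}_t=\Psi_tR_t\Psi_t^{-1}$ acts on $H$-valued series while $\f^{A_1}$ is $\C^N$-valued; the correct operator there is $\Psi_tR_t(z)$, exactly as in \prref{vanishing_a1}, and this is what the paper writes.
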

\begin{proof}
Due to \leref{lrfock} and \prref{translation}, the above identity holds 
for $b=\beta$, which is the generator of $\F_\be$. According to the Wick
formula (see \prref{dxtal}), it is enough to compare the propagators
of the two modules. 
The propagator of $\widehat{\Psi}_t\widehat{R}_t Y^{A_1}_u(\be,\mu)
(\widehat{\Psi}_t\widehat{R}_t)^{-1}$ and $\widehat{\Psi}_t\widehat{R}_t Y^{A_1}_u(\be,\la)
(\widehat{\Psi}_t\widehat{R}_t)^{-1}$ is 
\ben
\d_\la\d_\mu\, \Omega\bigl((\Psi_t R_t(z) \f^{A_1}(u,\mu,z))_+,(\Psi_t R_t(z)\f^{A_1}(u,\la,z))_-\bigr) \,.
\een
But
\ben
\Psi_t R_t(z)\,\f^{A_1}(u,\la,z) = \f_\be(t,\la,z)
\een
thanks to \prref{vanishing_a1}. 
\end{proof}

\section{Analytic continuation of the propagators}\label{acpf}

The goal of this section is to prove \thref{tprop1} and
\thref{tprop2}. The idea is to express the phase factors via certain  
integrals. The latter were already used in \cite{FGM}, but we need to
establish some further properties, which will alow us to extend
analytically the phase factors and the
corresponding propagators. 

\subsection{Integral representation of the phase factors}\label{irep}
Let us denote by 
\ben
\Gamma^\infty_\al(t,\la) = \io_\la \, \Gamma_\al(t,\la)\,,\qquad \al\in Q\,,
\een
the Laurent series expansion at $\la=\infty$ of the vertex operators \eqref{vertop}. 
Due to \leref{lphihat}, the product of two vertex operators is given by
\beq\label{vop-ope}
\Gamma^\infty_\al(t,\mu) \Gamma^\infty_\beta(t,\la) = B^\infty_{\al,\beta}(t,\mu,\la) \,
{:}\Gamma^\infty_\al(t,\mu) \Gamma^\infty_\beta(t,\la){:}\, ,
\eeq
where 
\ben
B^\infty_{\al,\beta}(t,\mu,\la)  = \io_\mu\io_\la\,\exp \Omega(\f_\al(t,\mu,z)_+, \f_\be(t,\la,z)_-)
\een
is the so-called {\em phase factor}.
Then, by definition (see \eqref{propser}),
\ben
P^{\infty}_{\al,\beta}(t,\mu,\la) = \d_\mu\d_\la \log B^\infty_{\al,\beta}(t,\mu,\la) \,.
\een
The goal in this section will be to prove \thref{tprop1}. In fact, we
will prove a slightly stronger statement, namely  that
the phase factors are multivalued analytic functions with monodromy
$W$. 

We will make use of line integrals in $B$, whose integrands are
1-forms defined in terms of the period vectors
$I^{(k)}_\al(t,\la)$. It is convenient to embed $B\subset B\times \C$,
$t\mapsto (t,0)$ and restrict the Milnor fibration and the
corresponding middle homology bundle to $B$. The
restriction of the discriminant $\Si\subset B\times \C$ to $B$ will be called again the
discriminant and its complement in $B$ will be denoted by
$B'$. In particular, the period vectors 
\beq\label{tinv1}
I^{(k)}_\al(t,\la)= I^{(k)}_\al(t-\la\one,0)
\eeq
may be singular only at points $(t,\la)$ such that $t-\la\one$
belongs to the discriminant.

Using the differential equations from  Lemma \ref{lem:periods}, we get
\beq\label{phase-form}
d^B\Omega(\f_\al(t,\mu,z)_+,\f_\beta(t,\la,z)_-) =
I^{(0)}_\al(t,\mu)\bullet_t I^{(0)}_\beta(t,\la) \,,
\eeq
where $d^B$ denotes the de Rham differential on $B$.
Motivated by this identity, let us consider the following family of
improper integrals depending on parameters $t,\la,$ and $\xi$:
\beq\label{integral-11}
B_{\al,\be}(t,\la;\xi) = \lim_{\ep\to 0} 
\exp\Big( \int_{-\ep\one}^{t-\la\one} I^{(0)}_\al(t',\xi)\bullet_{t'} I^{(0)}_\beta(t',0)\Big)\,,
\eeq
where the integration is along a path $C\colon [0,1]\to B$, such that the
strip 
\ben
C_{\xi}\colon [0,1]\times[0,1] \to B, \quad 
C_{\xi}(s_1,s_2) =
C(s_1)-s_2\xi \one
\een 
does not intersect the discriminant. A path $C$ with this property
will be called a \emph{$\xi$-path}. The integrand is a multivalued 
$1$-form. In order to specify its values along the strip $C_\xi$, it is
enough to assume that the integration path passes through a reference 
point, say $-\one \in B$, where the branches of the periods are
fixed in advance. 
Note that for given $(t,\la)\notin\Si$, the
integral in \eqref{integral-11} is well defined for all sufficiently
small $\xi.$ 

Using the translation invariance of the periods (see \eqref{tinv1}) and
the fact that $\f_\beta(t,\la,z)_-$ vanishes at $t=\la\one$, we get
\beq\label{phase-inf}
B^\infty_{\al,\beta}(t,\mu,\la) = \io_\la\io_{\mu,\la}\, B_{\al,\be}(t,\la;\mu-\la)\,,
\eeq
where $\io_{\mu,\la}$ is the Laurent series expansion in the region
$|\mu|>|\la|\gg 0$. In particular, the limit in \eqref{integral-11}
exists. Indeed, let us split the integration path  in
\eqref{integral-11} into two parts (see parts $I$ and $II$ in
Fig. \ref{fig:ip} below):  
\ben
\int_{-\ep\one}^{-\la\one} +\int_{-\la\one}^{t-\la\one} \,.
\een
The second integral depends holomorphically on $\la$ and $\xi$,
because the integration path and the corresponding $\xi$-strip do not
intersect the discriminant, which
means that the integrand is analytic. 

The first integral is by \eqref{phase-form} the logarithm of the phase factor for the product
of $\Gamma^\infty_\al(0,\mu)$ and $\Gamma^\infty_\be(0,\la)$, where $\mu=\la+\xi$. 
Recall from \reref{rvq} that these vertex operators provide the principal
realization of the affine Lie algebra (see \cite{FGM}). It is an easy exercise to
compute these phase factors explicitly (see e.g.\ \cite{BK1, FGM} and Sect.\ \ref{twlat}). The answer is 
\beq\label{phase:prim}
B^\infty_{\al,\beta}(0,\mu,\la)= \prod_{k=0}^{h-1} \Bigl( \mu^{1/h} - e^{2\pi\sqrt{-1}k/h} \la^{1/h} \Bigr)^{(\si^k\al|\be)}\,.
\eeq
As in the proof of \leref{ltwlat}, one can see that for $\mu=\la+\xi$ the above function has the form 
\beq\label{sing-B}
\xi^{(\al|\beta)}(1  + \cdots)\,,
\eeq
where the dots stand for some function that depends analytically on
$\xi$.

Now to prove \thref{tprop1}, it is enough to set
\ben
P_{\al,\be}(t,\la;\mu-\la) = \d_\la\d_\mu \log\, B_{\al,\be}(t,\la;\mu-\la) \,;
\een
then the integral \eqref{integral-11} provides an analytic
continuation in $(t,\la)$ along any path avoiding the discriminant,
while formula \eqref{sing-B} implies that the propagator has the
required expansion \eqref{propag}. It remains only to prove that the phase
factors have monodromy $W$, i.e., if $C\subset B$ is a closed loop
based at $t-\la\one$ (avoiding the discriminant) and $w\in W$ is the
corresponding monodromy transformation (on vanishing homology) then
the analytic continuation of $B_{\al,\be}(t,\la;\xi)$ along $C$ is the
same as $B_{w\al,w\be}(t,\la;\xi).$  

\begin{lemma}\label{mon-inv}
Assume that\/ $B_{\al,\be}(t,\la;\xi)$ is invariant under the analytic
continuation along any loop\/ $C$ such that the corresponding monodromy
transformation\/ $w$ leaves both\/ $\al$ and\/ $\be$ invariant. Then\/ $B_{\al,\be}(t,\la;\xi)$ has monodromy\/ $W$.
\end{lemma}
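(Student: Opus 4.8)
The plan is to show that analytic continuation of $B_{\al,\be}(t,\la;\xi)$ along a loop $C$ with monodromy $w$ differs from $B_{w\al,w\be}(t,\la;\xi)$ only by a multiplicative constant, and then that this constant equals $1$. By the integral representation \eqref{integral-11}, continuation along $C$ replaces the integrand $I^{(0)}_\al(t',\xi)\bullet_{t'}I^{(0)}_\be(t',0)$ by $I^{(0)}_{w\al}(t',\xi)\bullet_{t'}I^{(0)}_{w\be}(t',0)$, since both period factors undergo the parallel transport induced by $C$ (for $|\xi|$ small this applies to the shifted branch as well). Hence
\begin{equation*}
[\text{continuation of }B_{\al,\be}\text{ along }C]=c_C(\al,\be)\,B_{w\al,w\be}(t,\la;\xi),
\end{equation*}
where $\log c_C(\al,\be)$ is the integral of the reference branch of that one-form around $C$ (closed up through the base point $-\one$), and is \emph{bilinear} in $(\al,\be)$ because $I^{(0)}_\al$ depends linearly on $\al$. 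First I would note that $c_C$ depends only on $w$: for a loop of trivial monodromy the hypothesis gives $c_C\equiv1$, and any two loops with the same $w$ differ by such a loop; writing $c_w$ for $c_C$, composition of loops yields the cocycle identity $c_{w_1w_2}(\al,\be)=c_{w_1}(\al,\be)\,c_{w_2}(w_1\al,w_1\be)$.

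Since the monodromy group $W$ is generated by the Picard--Lefschetz reflections $r_\ga$ $(\ga\in R)$, the cocycle identity reduces everything to proving $c_{r_\ga}\equiv1$ for each $\ga\in R$. Using the orthogonal decomposition $\lieh=\C\ga\oplus(\C\ga)^\perp$, on which $r_\ga$ acts by $-1$ and $+1$, and writing $\al=a\ga+\al_0$, $\be=b\ga+\be_0$ with $\al_0,\be_0\perp\ga$, bilinearity of $\log c_{r_\ga}$ splits the problem into four pieces. The piece $\log c_{r_\ga}(\al_0,\be_0)$ vanishes by the hypothesis, since $r_\ga$ fixes $\al_0$ and $\be_0$; and traversing $C$ twice, which has trivial monodromy $r_\ga^2=1$, forces $c_{r_\ga}(\ga,\ga)^2=c_{r_\ga}(\ga,\ga)\,c_{r_\ga}(-\ga,-\ga)=1$.

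The remaining, and genuinely delicate, contributions are the cross terms $c_{r_\ga}(\al_0,\ga)$, $c_{r_\ga}(\ga,\be_0)$ with $\al_0,\be_0\perp\ga$, together with the residual sign in $c_{r_\ga}(\ga,\ga)$; I expect this to be the main obstacle, and it cannot be handled by the abstract cocycle bookkeeping alone — one must use the actual shape of the integrand. The key observation I would exploit is that a loop $C$ encircling a \emph{generic} point of the discriminant $\Si$, being an honest loop in the $N$-dimensional base rather than an infinitesimally thin one, also winds once around the parallel branch $\Si+\xi\one$ for small $|\xi|$; consequently \emph{both} factors $I^{(0)}_\al(t',\xi)$ and $I^{(0)}_\be(t',0)$ acquire the same reflection $r_\ga$, the one-form in \eqref{integral-11} returns to a $\pm1$-multiple of itself, and $\log c_{r_\ga}(\al,\be)$ becomes a genuine closed contour integral. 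Evaluating it reduces, via the stationary-phase description of the periods near a nondegenerate critical point (\prref{vanishing_a1}) and the explicit principal-realization phase factor \eqref{phase:prim}, to the $A_1$ case, where a residue-at-infinity computation gives $\log c_{r_\ga}(\al,\be)\in2\pi\ii\,\Z$; combined with the fact that the leading term $\xi^{(\al|\be)}$ in \eqref{sing-B} carries no monodromy in $(t,\la)$, this yields $c_{r_\ga}(\al,\be)=1$ on all of $\lieh\times\lieh$, and hence $B_{\al,\be}$ has monodromy $W$.
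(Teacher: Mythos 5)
Your formal reduction (the relation $A_CB_{\al,\be}=c_C(\al,\be)\,B_{w\al,w\be}$ with $\log c_C$ bilinear, the cocycle identity, the reduction to reflections $r_\ga$, the vanishing of $\log c_{r_\ga}(\al_0,\be_0)$ for $\al_0,\be_0\perp\ga$, and the constraint $c_{r_\ga}(\ga,\ga)^2=1$) is sound. But the load-bearing step is exactly the one you defer to the last sentence, and there it collapses. The constant $c_{r_\ga}(\al,\be)$ is a single well-defined nonzero number, not something defined modulo periods; so the assertion ``$\log c_{r_\ga}(\al,\be)\in2\pi\ii\,\Z$'' is \emph{literally equivalent} to the conclusion $c_{r_\ga}(\al,\be)=1$, and your final clause about the leading term $\xi^{(\al|\be)}$ adds nothing. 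Moreover, the quantity you need to evaluate is not the integral of a single-valued closed form around a loop: for the cross term $c_{r_\ga}(\al_0,\ga)$ the integrand $I^{(0)}_{\al_0}(t',\xi)\bullet_{t'}I^{(0)}_{\ga}(t',0)$ returns to minus itself after one turn around the generic discriminant point, so ``$\oint_C$'' is not meaningful without first fixing how the continued branch of $B_{\al_0,\ga}$ is to be compared with the reference branch of $B_{\al_0,-\ga}=B_{\al_0,\ga}^{-1}$; and the local model is a holomorphic period paired against a square-root one, which is not the two-point $A_1$ computation you invoke. In short, the cross terms and the residual sign in $c_{r_\ga}(\ga,\ga)$ --- precisely the part you flag as ``genuinely delicate'' --- are never actually computed, so the argument is incomplete.

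The paper closes this gap by a different and simpler device that sidesteps all constants at once: it bases the loop at a point $t_0-\la_0\one$ with $\la_0$ large, where the phase factor is determined by its Laurent expansion at $\la=\infty$, namely the explicit principal-realization formula \eqref{phase:prim}; since that expansion of the analytically continued function is the expansion of $B_{w\al,w\be}$, one gets the \emph{exact} identity $A_{C_0}B_{\al,\be}(t_0,\la_0;\xi)=B_{w\al,w\be}(t_0,\la_0;\xi)$ for one loop $C_0$ representing each $w$, with no undetermined constant. For an arbitrary loop $C_1$ based at $t-\la\one$, conjugating by the path $C_2$ and dividing by this anchored identity turns the ratio into $\exp\oint_C$ over the loop $C=C_0^{-1}\circ C_2^{-1}\circ C_1\circ C_2$, whose monodromy is trivial, so the hypothesis kills it. If you graft this anchoring-at-infinity step into your argument it pins down every $c_C$ directly (making the generator reduction and the bilinearity bookkeeping unnecessary); without some such anchor, the cross terms cannot be determined by formal manipulation of the cocycle alone.
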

\begin{proof}
Let $w\in W$ be any monodromy transformation. 
We may assume that the path in the definition of the phase factor \eqref{integral-11}
passes through a point $t_0-\la_0\one$ such that $\la_0$ is sufficiently
large. Then 
\ben
B_{\al,\be}(t,\la;\xi) = A_{C_2}(B_{\al,\be}(t_0,\la_0;\xi))\,,
\een
where $C_2$ is the portion of the path from $t_0-\la_0\one$ to $t-\la\one$ and
$A_{C_2}$ denotes analytic continuation along $C_2$.  
If $C_0$ is a loop based at $t_0-\la_0\one$ such that the
corresponding monodromy transformation is $w$, then 
\beq\label{winf}
A_{C_0}(B_{\al,\be}(t_0,\la_0;\xi)) = B_{w\al,w\be}(t_0,\la_0;\xi)\,,
\eeq
because both sides are given by an integral whose Laurent series
expansion $\io_{\la_0}\io_{\mu_0,\la_0}$ with $\mu_0=\xi+\la_0$ is
$B_{w\al,w\be}^\infty(t_0,\mu_0,\la_0)$. 

Now let $C_1$ be a loop based at $t-\la\one$ whose monodromy
transformation is $w$; then 
\ben
\frac{A_{C_1}(B_{\al,\be}(t,\la;\xi))}{B_{w\al,w\be}(t,\la;\xi)} = 
\frac{A_{C_1\circ C_2}(B_{\al,\be}(t_0,\la_0;\xi))}{A_{C_2}(B_{w\al,w\be}(t_0,\la_0;\xi))} \,.
\een  
Using \eqref{winf}, we get that the above ratio is precisely
\ben
\exp\Big(\oint_C \,
I^{(0)}_\al(t',\xi)\bullet_{t'}I^{(0)}_\be(t',0)\Big)=\frac{A_C(B_{\al,\be}(t_0,\la_0;\xi))}{B_{\al,\be}(t_0,\la_0;\xi)}
\,,
\een
where 
\ben
C=C_0^{-1}\circ C_2^{-1}\circ C_1\circ C_2\,.
\een
Both $\al$ and $\be$ are fixed by the monodromy transformation along
$C$; hence the above ratio is equal to $1$.
\end{proof}

Therefore, we need to prove that the phase factors satisfy the
condition in \leref{mon-inv}. The proof follows essentially the ideas of Givental 
\cite{G3} and consists of two steps, which are formulated in the next two lemmas. 
\begin{lemma}\label{vanishing-period}
If\/ $C$ is a small\/ $\xi$-loop that goes twice around a generic point on the
discriminant, then 
\ben
 \oint_C I^{(0)}_\al(t',\xi)\bullet_{t'} I^{(0)}_\beta(t',0)
\een
is an integer multiple of\/ $2\pi \sqrt{-1}.$
\end{lemma}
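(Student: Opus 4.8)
The plan is to reduce the computation to the known local behavior of period integrals near a generic point of the discriminant, where the singularity is of type $A_1$, and then use the monodromy constraint that squaring a simple reflection gives the identity.

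First I would set up the local picture. Let $(t_*,\la_*)$ be a point with $t_*-\la_*\one$ a generic point of the discriminant, so that a single vanishing cycle $\gamma\in R$ (up to sign) vanishes there, and the local monodromy is the reflection $r_\gamma$. Decompose $\al = \al' + c_\al\gamma$ and $\be = \be' + c_\be\gamma$ where $(\al'|\gamma)=(\be'|\gamma)=0$ and $c_\al,c_\be\in\QQ$ (using $(\gamma|\gamma)=2$). By \thref{txtla2}(a) and \prref{vanishing_a1}, the only part of $I^{(0)}_\al(t',\xi)$ and $I^{(0)}_\be(t',0)$ that is singular (multivalued) near the discriminant point is the component along the vanishing cycle $\gamma$; the components coming from $\al'$ and $\be'$ are single-valued there, hence contribute $0$ to a loop integral of an exact form — more precisely, $d^B$ of the integrand is $I^{(0)}_\al\bullet_{t'} I^{(0)}_\be$ by \eqref{phase-form}, and the cross-terms and the $\al'\bullet\be'$ term, being built from holomorphic periods near the loop, integrate to zero around a contractible-after-removing-the-point loop only if one is careful; the genuinely contributing piece is the $\gamma\bullet\gamma$ term $c_\al c_\be\, I^{(0)}_\gamma(t',\xi)\bullet_{t'} I^{(0)}_\gamma(t',0)$.

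Next, for the $\gamma\bullet\gamma$ term, I would invoke \prref{vanishing_a1}, which gives $\f_\gamma(t',\la,z)=\Psi_{t'}R_{t'}(z)\sum_k(-z\d_\la)^k\, 2e_i/\sqrt{2(\la-u_i(t'))}$ near the relevant critical value, so that $I^{(0)}_\gamma(t',\la) = \Psi_{t'}R_{t'}(0)\cdot \sqrt2/\sqrt{\la-u_i(t')}\cdot e_i$ plus higher-order-in-$z$ corrections that do not enter $I^{(0)}$. Thus $I^{(0)}_\gamma(t',\xi)\bullet_{t'} I^{(0)}_\gamma(t',0)$ is, up to a single-valued holomorphic factor coming from $R_{t'}$ and $\Psi_{t'}$ and the diagonal multiplication, proportional to $\big((\xi - u_i(t'))(-u_i(t'))\big)^{-1/2}$ times $d^B$ of something; more simply, on a $\xi$-loop $C$ going around the discriminant point, the integral computes the logarithmic monodromy of the $A_1$ phase factor, which from \eqref{phase:prim} specialized to $A_1$ (i.e., $h=2$) is of the form $\big((\mu^{1/2}-\la^{1/2})(\mu^{1/2}+\la^{1/2})\big)^{(\al|\be)}=(\mu-\la)^{(\al|\be)}$ — so going \emph{once} around picks up $2\pi\sqrt{-1}\,(\gamma|\gamma)c_\al c_\be/2 \cdot(\text{something})$. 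The point is that $(\gamma|\gamma)=2$ and $c_\al,c_\be\in\frac12\Z$ at worst, so going \emph{twice} around multiplies the exponent by $2$ and lands in $2\pi\sqrt{-1}\,\Z$. I would make this precise by direct computation with the explicit $A_1$ periods $I^{(k)}_\gamma(0,\la)\sim\la^{\mp 1/2}$ (from \eqref{per-0} with exponent $m=1$, $h=2$) to pin down the exact exponent as $(\al|\be)$ minus the contribution that would be present for generic $\al,\be$, concluding it is an integer after doubling.

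The main obstacle I expect is bookkeeping the mixed terms and the $R_{t'}$-corrections carefully enough to be sure they are genuinely single-valued (hence contribute nothing to the loop integral) rather than merely "lower order." The clean way to handle this is to conjugate by $\widehat\Psi_t\widehat R_t$ using \leref{R}, reducing $\f_\gamma$ to the bare $A_1$ series $\f^{A_1}(u,\la,z)$; then the propagator $P_{\gamma,\gamma}$ near $(t,u)$ is exactly the $A_1$ propagator, whose phase factor from \eqref{phase:prim} with $h=2$ is $(\mu-\la)^{2}$ on the nose for $\al=\be=\gamma$, and the loop integral of its logarithmic derivative around the branch point is manifestly in $2\pi\sqrt{-1}\,\Z$; rescaling by $c_\al c_\be$ (rational with denominator dividing $2$) and doubling the loop finishes it. The terms involving $\al'$ or $\be'$ are handled by \thref{tprop2}(a): the corresponding propagators are analytic at $\la=u$, so their contribution to any small loop vanishes identically.
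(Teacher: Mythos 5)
Your overall strategy (reduce to a line parallel to $\C\one$ through the generic discriminant point, split $\al,\be$ into $r_\gamma$-invariant and anti-invariant parts, reduce the anti-invariant piece to an explicit $A_1$ computation, and use the doubling of the loop together with $c_\al=(\al|\gamma)/2\in\frac12\Z$ to land in $2\pi\sqrt{-1}\,\Z$) is the same as the paper's. But there is a genuine gap at the step you yourself flag as the main obstacle: the $R_{t'}$-corrections. Your assertion that the higher $R_k$ terms ``do not enter $I^{(0)}$'' is false. From \prref{vanishing_a1}, extracting the $z^0$-coefficient of $\Psi_{t'}R_{t'}(z)\sum_k(-z\d_\la)^k\,2e_i/\sqrt{2(\la-u_i)}$ pairs each $R_k$ with $\d_\la^{-k}I^{(0)}_{A_1}$, so $I^{(0)}_\gamma(t_0,\la)=\Psi_{t_0}\sum_{k\ge0}R_k(t_0)\,\d_\la^{-k}I^{(0)}_{A_1}(u,\la)\,e_i$ with \emph{all} $k$ contributing. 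The paper kills the $k,l\neq(0,0)$ terms in the resulting double sum by a nontrivial argument: the symplectic identity $\sum_{k+l=n}(-1)^k\,\leftexp{T}{R}_l(t_0)R_k(t_0)=\de_{n,0}$ combined with integration by parts over the closed loop, together with a separate argument (induction on the order of vanishing at $u=\xi$) to justify this manipulation for the merely asymptotic, generally divergent series $R_t(z)$. None of this is in your proposal, and it is the actual content of the lemma.

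Your proposed repair --- conjugating by $\widehat{\Psi}_t\widehat{R}_t$ via \leref{R}, and invoking \thref{tprop2}(a) for the invariant parts --- is circular. \leref{R} rests on the $r_\be$-twisted module structure $Y^u_t$ of \thref{txtla2}(b), which is proved from \thref{tprop2}(b), which is deduced from \thref{t-ope}; and the proof of \thref{t-ope} (analyticity of $f_{\al,\be}$ at $\xi=0$ and its local constancy in $t_0$) uses Lemmas \ref{vanishing-period} and \ref{path-independence}. So you may not use any of that machinery here. (The fact you actually need for the invariant parts --- that $I^{(0)}_{\al'}(t',\cdot)$ is single-valued, hence analytic, near the generic discriminant point --- is elementary and can be stated directly, as the paper does; what then still requires care is why a product of an analytic period with the $(\,\cdot\,)^{-1/2}$-singular period of $\gamma$ integrates to zero over the double loop, namely by shrinking the contour onto the integrable square-root singularity.) Finally, your appeal to the phase factor $(\mu-\la)^{(\al|\be)}$ from \eqref{phase:prim} conflates the singularity in $\xi=\mu-\la$ with the monodromy of the loop in $t'$ around the discriminant; the quantity to compute is $\oint_C 2\,du/\sqrt{(\xi-u)(-u)}=\pm8\pi\sqrt{-1}$ over the double loop, which then multiplies $c_\al c_\be$ to give $\pm2\pi\sqrt{-1}\,(\al|\gamma)(\be|\gamma)$.
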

\proof
For homotopy reasons, we may assume that $C$ lies on the complex line
through the generic point $t_0$ on the discriminant parallel to
$\C\one$. Then the integral can be written as
\beq\label{period}
\oint_C( I^{(0)}_\al(t_0,\xi-u), I^{(0)}_\beta(t_0,-u))du\,,
\eeq
where $u=u_i(t)$ is the critical value that gives locally near $t_0$
the equation of the discriminant: $u(t)=0$. 

If $\al$ is invariant with respect to the local monodromy around
$t_0$, then $I^{(0)}_{\al}(t',\xi)$ is analytic for all $t'$ sufficiently
close to $t_0$ and the integral vanishes identically. 
The same also applies to $\be$.
Decomposing
$\al$ and $\beta$ into invariant and anti-invariant cycles with
respect to the local monodromy, we get
\ben
\al = \al' + (\al|\gamma)\gamma/2\,,\qquad \beta=\beta'+(\beta|\gamma)\gamma/2\,,
\een
where $\gamma$ is the cycle vanishing over $t_0$. Since only the
anti-invariant parts contribute to the integral, we may assume that
$\al=\beta=\gamma$ are vanishing over $t_0$ and will have to prove
that the integral is an integer multiple of $8\pi\sqrt{-1}$. 

In this case, the period $I^{(0)}_\beta(t_0,\xi-u)$ has the
following expansion:
\ben
I^{(0)}_\beta(t_0,\xi-u) = \Psi_{t_0} 
\sum_{k=0}^\infty R_k(t_0) \d_\xi^{-k} I^{(0)}_{A_1}(u,\xi)e_i
\een
(see Sect. \ref{sec3_4}), where 
\ben
 I^{(0)}_{A_1}(u,\xi) = \frac{\pm 2}{\sqrt{2(\xi-u)}}
\een
is the period of $A_1$-singularity. 
Substituting this expansion in formula \eqref{period} and using that
$\Psi_{t_0}$ is an isometry, we obtain
\ben
\sum_{k,l=0}^\infty \oint_C\bigl(R_k(t_0)(-\d_{u})^{-k} I^{(0)}_{A_1}(u,\xi), R_l(t_0)(-\d_{u})^{-l}I^{(0)}_{A_1}(u,0)\bigr)du\,.
\een
Since $R_{t_0}(z)$ is a symplectic transformation, we have 
$$
\sum_{k+l=n}
(-1)^k 
\ \leftexp{T}{R}_l(t_0) R_k(t_0)= \delta_{n,0}\,.
$$ 
Using integration by parts, we find that only the terms with $k=l=0$
contribute to the integral, i.e., we get
\ben
\oint_C (I^{(0)}_{A_1}(u,\xi),I^{(0)}_{A_1}(u,0))du\,.
\een

The argument is in fact slightly more subtle, because $R_t(z)$ is in general a
divergent power series. The integral operator $\sum_{k\geq n}
R_k(t_0)(-\d_{u})^{-k}$, however, when applied to the period
$I^{(0)}_{A_1}(u,\xi)$ produces a convergent series and  increases the order of
the zero at $u=\xi$ by $n$. Therefore, by induction on $n$ we see
that modulo the term with $k=l=0$ the remaining part of the integrand has an
infinite order of vanishing at $u=\xi$, so it must be~$0$.

It remains only to compute the integral
\ben
\oint_C \frac{2}{\sqrt{(\xi-u)(-u)}}\, du\,,
\een
where $C$ is a closed contour going twice around $u=0$ and
$u=\xi$. The integral is easilly seen to be $\pm 8\pi
\sqrt{-1}$, which completes the proof.
\qed

\begin{lemma}\label{path-independence}
Assume that\/ $C$ is a\/ $\xi$-loop in\/ $B'$ such that the cycles\/
$\al$ and\/ $\beta$ are invariant under the parallel transport along\/
$C$. Then 
\ben
\oint_C I^{(0)}_\al(t',\xi)\bullet_{t'}
I^{(0)}_\beta(t',0) 
\een
is an integer multiple of\/ $2\pi \sqrt{-1}.$
\end{lemma}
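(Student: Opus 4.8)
The plan is to reduce the assertion to the special case already treated in Lemma \ref{vanishing-period}, using the presentation of the fundamental group $\pi_1(B')$ (equivalently, of the Artin--Brieskorn braid group associated to the singularity). First I would recall the key group-theoretic input: the monodromy representation $\pi_1(B') \to W$ is surjective, and its kernel is generated, as a normal subgroup, by the squares $\gamma_\al^2$ of the generators $\gamma_\al$ corresponding to small loops around generic points of the discriminant where the cycle $\al$ vanishes. This is the statement that $W$ is the quotient of the braid group by the normal subgroup generated by the squares of the standard generators. Hence any loop $C$ whose monodromy $w$ fixes both $\al$ and $\be$ — more precisely, since we want $w=1$, any loop $C$ in the kernel of $\pi_1(B')\to W$ — can be written as a product of conjugates $g_j \gamma_{\al_j}^2 g_j^{-1}$.

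The next step is to observe that the integral $\oint_C \omega_{\al,\be}$, where I abbreviate $\omega_{\al,\be} = I^{(0)}_\al(t',\xi)\bullet_{t'} I^{(0)}_\be(t',0)$, depends on $C$ only through its class in $H_1$ of the domain on which the (multivalued) $1$-form $\omega_{\al,\be}$ is single-valued; and crucially, by \eqref{phase-form} this form is \emph{closed} (it is $d^B$ of the phase factor's logarithm). Therefore, for a product of loops, the integral is additive, and for a conjugate $g\gamma g^{-1}$ the integral equals the integral of $\omega_{g^{-1}\al,\,g^{-1}\be}$ over $\gamma$, since analytic continuation of the periods along $g$ transforms $\al,\be$ by the corresponding monodromy. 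Thus $\oint_C \omega_{\al,\be}$ becomes a sum of terms each of the form $\oint_{\gamma_j^2} \omega_{\al'_j,\be'_j}$ for suitable cycles $\al'_j,\be'_j$ obtained from $\al,\be$ by elements of $W$, and each $\gamma_j$ is a small loop around a generic point of $\Si$. But that is exactly the setup of Lemma \ref{vanishing-period}, which tells us each such term is an integer multiple of $2\pi\sqrt{-1}$. Summing, $\oint_C \omega_{\al,\be} \in 2\pi\sqrt{-1}\,\Z$, as claimed.

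A point that needs care is the bookkeeping of base points and the $\xi$-deformation: the form $\omega_{\al,\be}$ is defined on $B'$ (after restricting the fibration to $B\hookrightarrow B\times\C$), but the quantities that enter the phase factor involve the shifted argument $t'-\xi\one$ as well, so one must check that a loop $C$ which is a $\xi$-loop (i.e. the whole strip $C_\xi$ avoids $\Si$) can indeed be written as a product of conjugates of \emph{$\xi$-loops} around the discriminant, not merely of ordinary small loops. This is where one uses that $\xi$ is small: for $\xi$ sufficiently small relative to the geometry of $C$, any loop around a generic point of $\Si$ can be thickened to a $\xi$-loop without meeting $\Si$, and the homotopy realizing the braid relations can likewise be performed within the $\xi$-admissible locus. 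I expect this compatibility of the braid-group presentation with the $\xi$-thickening to be the main obstacle — it is essentially a transversality/genericity argument, and one has to invoke the fact (from Section \ref{mfib}) that the discriminant is a hypersurface with the stated transversality properties so that small tubular neighborhoods behave well. Once that is in place, the homological additivity of the integral of the closed form $\omega_{\al,\be}$ together with Lemma \ref{vanishing-period} finishes the proof.
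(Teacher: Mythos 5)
Your reduction to \leref{vanishing-period} via the presentation of the braid group, the additivity of the integral of the closed form $\omega_{\al,\be}$, the conjugation rule $\oint_{g\gamma g^{-1}}\omega_{\al,\be}=\oint_{\gamma}\omega_{g^{-1}\al,g^{-1}\be}$, and the remark about keeping everything within the class of $\xi$-loops are all in line with the paper's argument. But there is a genuine gap at the very first step: you replace the hypothesis ``$\al$ and $\be$ are invariant under the parallel transport along $C$'' by ``$C$ lies in the kernel of $\pi_1(B')\to W$.'' These are not the same. The monodromy $w$ of $C$ is only assumed to fix the two vectors $\al$ and $\be$; it can be a nontrivial element of $W$ (e.g.\ a reflection $r_\gamma$ with $\gamma$ orthogonal to both $\al$ and $\be$). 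Such a loop is \emph{not} a product of conjugates of squares $g\gamma_{\al_j}^2g^{-1}$, so your decomposition does not apply to it, and the full strength of the lemma (for $w\neq 1$ fixing $\al,\be$) is exactly what is fed into \leref{mon-inv} and into the proof that $f_{\al,\be}(t_0,\xi)$ is locally constant.

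The missing ingredient is the following two-step argument from the paper. First, since $W$ is a finite reflection group, the pointwise stabilizer of $\{\al,\be\}$ is generated by the reflections in hyperplanes containing both $\al$ and $\be$ (Bourbaki, Ch.~V, \S3.3, Prop.~2). Lifting each such reflection to a simple $\xi$-loop $C_i'$ around a generic discriminant point over which the corresponding cycle $\gamma_i\perp\al,\be$ vanishes, one writes $C$ as a product of these $C_i'$ and a loop with \emph{trivial} monodromy; only the latter is handled by the squares-of-generators presentation and \leref{vanishing-period}, as in your proposal. Second, one must show the loops $C_i'$ contribute nothing: since $\al$ and $\be$ are invariant under the local monodromy $r_{\gamma_i}$, the periods $I^{(0)}_\al$ and $I^{(0)}_\be$ are single-valued, hence holomorphic, in a disk containing $C_i'$, so $\oint_{C_i'}\omega_{\al,\be}=0$. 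With these two additions your argument becomes the paper's proof.
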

\proof
Since the monodromy group is a finite reflection group, any monodromy
transformation $w$ that fixes $\al$ and $\beta$ can be written as a
composition of reflections with respect to hyperplanes containing both
$\al$ and $\beta$ (see \cite{Bour}, Chapter V, Section 3.3, Proposition
2). On the other hand,
the monodromy group is the quotient of the Artin--Brieskorn braid group $\pi_1(B')$ by the normal
subgroup generated by the loops going twice around generic points on the
discriminant (see \cite{AGV,Eb}).

It follows that our path $C$ is homotopic to the composition of
several paths $C_i'$ along which $\alpha$ and $\beta$ are invariant, and several
paths $C_j''$ that are simple loops going twice around generic points
on the discriminant. Clearly, we may choose $C_i'$ and $C_j''$ to be
$\xi$-loops. 

The integral over $C$ can be written as a sum of
integrals over the loops $C_i'$ and $C_j''$. Since both periods are
invariant along $C_i'$, they must be holomorphic in a disk containing
$C_i'$, which implies that the integrals along $C_i'$
vanish. By Lemma \ref{vanishing-period}, the
integrals along $C_j''$ contribute only integer multiples of
$2\pi\sqrt{-1}$, which completes the proof.
\qed

\subsection{The phase factors near a critical value}

Let us assume now that $t_0\in B$ is a generic point on the
discriminant and that the phase factor $B_{\al,\be}(t,\la;\mu-\la)$ is analytically
extended along some path for $(t,\la,\mu)$ such that $t-\la\one$ is
close to $t_0$ and $\beta$ coincides with the cycle vanishing over
$t_0$. The crirical values $(u_1(t),\dots,u_N(t))$ of $F(t,\cdot)$
form a coordinate system for $t$ near $t_0$ and the local equation of
the discriminant has the form $\{u_i(t)=0\}$ for some $i$.   

Introduce the notation
\ben
\Gamma^{u_i}_\al(t,\la) =\io_{\la-u_i}\Gamma_\al(t,\la)\,,\qquad \al\in Q\,.
\een
The vertex operator product \eqref{vop-ope} is well defined in the
$(\la-u_i)$-adic topology, and we have 
\ben
\Gamma^{u_i}_\al(t,\mu) \Gamma^{u_i}_\be(t,\la)
=B_{\al,\be}^{u_i}(t,\mu,\la) \, {:}\Gamma^{u_i}_\al(t,\mu)
\Gamma^{u_i}_\be(t,\la) {:} \,,
\een
where the phase factor can be identified (for the same reason we used
to derive \eqref{phase-inf}) with the Laurent series
expansion
\ben
\io_{\la-u_i}\io_{\mu-u_i,\la-u_i}\, \widetilde{B}_{\al,\be}(t,\la;\mu-\la)\,.
\een
Here 
\beq\label{integral-2}
\widetilde{B}_{\al,\beta}(t,\la;\xi)= \lim_{\ep\to 0}\, \exp\Big(\int_{t_0-\epsilon\one}^{t-\la\one}
I^{(0)}_\al(t',\xi)\bullet_{t'} I^{(0)}_\beta(t',0)\Big)\,, 
\eeq
where the integration is along a $\xi$-path $C$ such that $\beta$
vanishes as $t'\to t_0$, and the limit is along a straight segment such that the
line segment $[\la+\ep,\mu+\ep]$ does not intersect $0$ as $\ep$ moves
toward $0.$ Such a path exists provided $\xi:=\mu-\la$ is
sufficiently small.

\begin{theorem}\label{t-ope}
For every\/ $\al\in Q$ and every cycle\/
 $\beta$ vanishing  over\/ $t_0$, we have 
\ben
B_{\al,\beta}(t,\la;\xi) = c_{\al,\beta}\, \widetilde{B}_{\al,\beta}(t,\la;\xi)\,,
\een
where\/ $c_{\al,\beta}$ is a constant independent of\/ $t,\la$ and $\mu$.
\end{theorem}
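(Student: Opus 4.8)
The plan is to compare the two integral representations \eqref{integral-11} and \eqref{integral-2} for the phase factor, using the fact that both $B_{\al,\be}(t,\la;\xi)$ and $\widetilde B_{\al,\be}(t,\la;\xi)$ are obtained by exponentiating line integrals of the \emph{same} $1$-form $I^{(0)}_\al(t',\xi)\bullet_{t'}I^{(0)}_\be(t',0)$ on $B'$ (with the same $\xi$-strip condition), differing only in the choice of base point of the path: in \eqref{integral-11} the path starts near $-\la_0\one$ with $\la_0$ large, where the expansion $\io_{\la}\io_{\mu,\la}$ recovers the principal-realization phase factor $B^\infty_{\al,\be}$, while in \eqref{integral-2} the path starts near $t_0-\ep\one$, where $\be$ is a cycle vanishing over $t_0$. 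First I would observe that since the integrand is a closed $1$-form on the simply-connected-in-the-appropriate-sense region traced out by a $\xi$-path, the ratio $B_{\al,\be}(t,\la;\xi)/\widetilde B_{\al,\be}(t,\la;\xi)$ is, up to the exponential of a period over a closed loop, independent of $t$ and $\la$ once the homotopy class of the connecting path is fixed. Concretely, pick a reference value $(t_*,\la_*)$ with $t_*-\la_*\one$ close to $t_0$ and $\la_*$ otherwise generic, and join it to the base points of both integrals by fixed paths; then
\[
\frac{B_{\al,\be}(t,\la;\xi)}{\widetilde B_{\al,\be}(t,\la;\xi)}
= \exp\Bigl(\int_{\gamma} I^{(0)}_\al(t',\xi)\bullet_{t'}I^{(0)}_\be(t',0)\Bigr),
\]
where $\gamma$ is a fixed path (the concatenation of the two connecting paths run in opposite directions), so the right-hand side is a constant $c_{\al,\be}$ depending only on the choices made and not on $(t,\la,\mu)$.

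The content of the argument, then, is that this ratio is genuinely \emph{single-valued} as $(t,\la)$ moves — i.e.\ that $\gamma$ can be taken to be a genuine path and not merely a path-up-to-monodromy. Here I would invoke exactly the machinery already built in Section~\ref{acpf}: by \leref{path-independence}, the integral of the $1$-form over any $\xi$-loop along which both $\al$ and $\be$ are invariant lies in $2\pi\sqrt{-1}\,\Z$, hence contributes trivially to the exponential. For the left-hand integral \eqref{integral-11} this is automatic (it is our $B_{\al,\be}$, already known to have monodromy $W$ by \leref{mon-inv}); for the right-hand integral \eqref{integral-2} I must check that deforming $t$ near $t_0$ along loops that fix $\be$ (the vanishing cycle) and $\al$ does not change $\widetilde B_{\al,\be}$, which is again \leref{path-independence} together with \leref{vanishing-period} applied to the small loop around $\{u_i(t)=0\}$. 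The point is that both phase factors, under the relevant monodromies, transform by $\al\mapsto w\al$, $\be\mapsto w\be$ in the same way, so their ratio is monodromy-invariant and therefore extends to a single-valued — hence, by the above, constant — function.

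Finally I would pin down that $c_{\al,\be}\neq0$ and is finite: near the vanishing locus, $\widetilde B_{\al,\be}(t,\la;\xi)$ picks up the factor coming from the $A_1$-singularity periods via \prref{vanishing_a1} (the expansion analogous to \eqref{phase:prim}--\eqref{sing-B}), and matching the leading behavior of $B_{\al,\be}$ — whose leading term is $\xi^{(\al|\be)}(1+\cdots)$ by \eqref{sing-B} — against that of $\widetilde B_{\al,\be}$ shows the ratio is a nonzero holomorphic constant. The main obstacle I anticipate is the bookkeeping of branches: one must be careful that the two line integrals are taken along paths lying in the \emph{same} sheet of the multivalued $1$-form, i.e.\ that the connecting path $\gamma$ is chosen compatibly with the reference point $-\one\in B$ fixed in Section~\ref{irep}; once the branches are aligned, the closedness of the $1$-form (a direct consequence of \leref{lem:periods}, as in \eqref{phase-form}) does all the work, and the $\Z$-valuedness of loop periods from \leref{path-independence} and \leref{vanishing-period} guarantees the exponential of the correction is an honest constant rather than a multivalued function.
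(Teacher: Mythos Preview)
Your argument correctly shows that the ratio $B_{\al,\be}/\widetilde B_{\al,\be}$ is independent of $(t,\la)$: both integrals share the endpoint $t-\la\one$, so the ratio is the exponential of an integral over a fixed connecting path $\gamma$ from a neighborhood of $0$ to a neighborhood of $t_0$, and the monodromy lemmas ensure single-valuedness. This part is essentially the paper's argument, though the paper phrases it by writing the ratio as $f_{\al,\be}(t_0,\xi)$ in \eqref{ftxi} and then separately proving it is locally constant in $t_0$.

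There is, however, a genuine gap: you have not shown independence from $\xi$ (equivalently from $\mu=\la+\xi$). You assert that the integral over the fixed path $\gamma$ is ``a constant $c_{\al,\be}$ \dots\ not depending on $(t,\la,\mu)$'', but the integrand $I^{(0)}_\al(t',\xi)\bullet_{t'}I^{(0)}_\be(t',0)$ depends explicitly on $\xi$ through the first period. So for fixed $\gamma$ the ratio is still, a priori, a nontrivial function of $\xi$. Neither closedness of the $1$-form in $t'$ nor the loop-period lemmas touch this point: those control the $t'$-variable, not the parameter $\xi$.

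The paper handles this in two further steps that you are missing. First (Lemma~\ref{anal}), one must show that $f_{\al,\be}(t_0,\xi)$ extends analytically to $\xi=0$; this is delicate because both endpoints of $\gamma$ contribute $\pm(\al|\be)\log\xi$ singularities (from \eqref{sing-0} at the $0$-end, and from the $A_1$-expansion at the $t_0$-end), and one must check they cancel. Second, and this is where the simple-singularity hypothesis enters, one shows that the integrand $I^{(0)}_\al(t',\xi)\bullet_{t'}I^{(0)}_\be(t',0)$ is weighted-homogeneous of total degree $0$ under the $\C^*$-action that scales $\xi$ along with the flat coordinates; hence $f_{\al,\be}(t_0,c\xi)=f_{\al,\be}(t_0,\xi)$ and, being analytic at $\xi=0$, it must be constant. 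Your leading-term matching at the end gestures at the first step but does not carry it out, and the homogeneity argument is absent entirely.
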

Note that \thref{t-ope} implies \thref{tprop2} (b), because after taking $\log$
of both sides of the above identity and differentiating
$\d_\la\d_\mu$, the left-hand side becomes the analytic continuation of the
propagator $P_{\al,\be}(t,\la;\mu-\la)$ while the Laurent series
expansion of the right-hand side is \eqref{prop-seru}. The proof of \thref{tprop2}
(a) was already obtained in the previous subsection. Indeed, if $\al'$ and
$\al''$ are cycles invariant with respect to the local monodromy, then
the periods $I^{(0)}_{\al'}(t',\xi)$ and $I^{(0)}_{\al''}(t',0)$ are
analytic for all $t'$ in a neighborhood of $t_0$, which implies that
the integral \eqref{integral-11} is analytic in $(t,\la)$.

\comment{\begin{remark}
The constants $c_{\al,\beta}$ depend on the choice of a branch of the
vertex operators $\Gamma^\al(t,\la)$ and $\Gamma^\beta_t(t,\la)$ for $t-\la\one$ near
$t_0$. While the branch of $\Gamma^\beta$ up to a $\Z_2$-monodromy
is uniquely fixed, the branch of the other vertex operator is not. The
constant $c_{\al,\beta}$ is global only if $\al=\beta$. Otherwise, the above
identity is true only locally. In other words,
$\widetilde{B}_{\al,\beta}/B_{\al,\beta}$ is a local system on the
open subset of generic points on the discriminant.
\end{remark}}

Let us denote the ratio
$B_{\al,\be}(t,\la;\xi)/\widetilde{B}_{\al,\be}(t,\la;\xi)$ by 
\beq\label{ftxi}
f_{\al,\beta}(t_0,\xi) := \lim_{(\ep',\ep'')\to 0}\ \exp\Big(  
\int_{-\ep'\one}^{t_0-\ep''\one} I^{(0)}_\al(t',\xi)\bullet_{t'} I^{(0)}_\beta(t',0) \Big)\,.
\eeq
We will prove that $f_{\al,\be}(t_0,\xi)$ is analytic in a
neighborhood of $\xi=0$ and is locally constant with respect
to $t_0$. Finally, we will check that the function is homogeneous of
degree 0, so it must be a constant independent of $\xi$.

\subsection{Analyticity at $\xi=0$}
Let us fix the following notation. The space of miniversal
deformations will be presented as $B= B_{N-1}\times \C$, where $\C$ is the
coordinate line through $\one$, the so called {\em primitive direction},  and $B_{N-1}$ is the
$(N-1)$-dimensional space spanned by the remaining coordinate
axes. Given $t\in B$, we put $'t\in B_{N-1}$ for the projection of $t$
on the first factor. Let us point out that $u_i(t) =
u_i('t)+t_N,$ therefore the points on the discriminant are precisely
the points of the form $t_0={'t_0} - u_i('t_0)\one$ for some critical
value $u_i$.  

Assume now that $'t_0\in B_{N-1}$ is generic, so that the
corresponding critical values are pairwise distinct. Note that 
the radius of convergence of the Laurent series
expansion near $\la=u_i('t_0)$ of the period $I^{(0)}_\al('t_0, \la)$,
viewed as a function of $\la$ only, is
\ben
\rho_i:=\rho_i('t_0) := {\rm min}_j |u_j('t_0)-u_i('t_0)|.
\een
Let us put $x_i:=\rho_i/2-u_i('t_0)$ and fix
$'t_0+x_i\one$ as a local reference point in the corresponding disk of
convergence $D(u_i('t_0),\rho_i).$ Here we have used the following notation:
\ben
D(u,r) = \{ 't_0+(x-u)\one\ |\ |x|<r\}.
\een
Given a positive number $\rho=\rho('t_0)$ we construct the following sets
of disks (see Fig. \ref{fig:ip}):
\begin{figure}[htbp]
\begin{center}
\scalebox{0.60}{\includegraphics{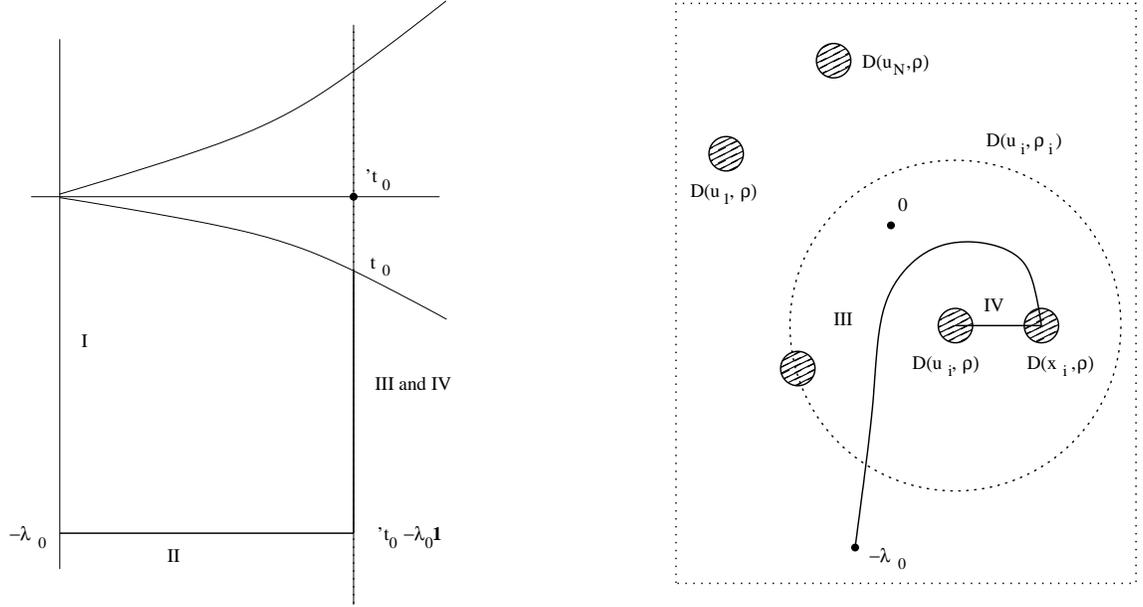}}
\caption{Integration path}
\label{fig:ip}
\end{center}
\end{figure}
\beq\label{disk-critical}
D(u_i('t_0),\rho),\quad 1\leq i\leq N
\eeq
and 
\beq\label{disk-reference}
D(x_i,\rho),\quad 1\leq i\leq N.
\eeq
We pick $\rho>0$ such that the disks \eqref{disk-critical} and
\eqref{disk-reference} are pairwise disjoint and such that
$D(x_i,\rho)$ is contained in the domain of convergence
$D(u_i('t_0),\rho_i).$ For example, if we set 
\ben
\rho('t_0)={\rm min}_{1\leq i\leq N}\ \rho_i/4 \,,
\een 
then all these requirements are satisfied. 

\begin{lemma}\label{anal}
Let\/ $t_0={'t}_0-u_i('t_0)\one$ be a generic point on the discriminant.
Then\/ $f_{\al,\beta}(t_0,\xi)$ extends analytically inside the disk\/
$|\xi|<\rho.$
\end{lemma}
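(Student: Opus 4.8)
The plan is to recognize $f_{\al,\be}(t_0,\xi)$ as a quotient of two phase factors and to show they have the same behaviour as $\xi\to0$. Fix once and for all the generic point $p={'t}_0+x_i\one\in D(x_i,\rho)$. Since $\rho\le\rho_i/4$, the points $p$ and $p-\xi\one$ lie off the discriminant for every $|\xi|<\rho$, so the construction of Section \ref{irep} defines analytic functions $B_{\al,\be}(p;\xi)$ and $\widetilde B_{\al,\be}(p;\xi)$ (the latter using the integral \eqref{integral-2} anchored near $t_0$) for $0<|\xi|<\rho$, and $f_{\al,\be}(t_0,\xi)=B_{\al,\be}(p;\xi)/\widetilde B_{\al,\be}(p;\xi)$, the reference point $p$ cancelling in the ratio (this is \eqref{ftxi} up to homotopy of the integration path; cf.\ Figure \ref{fig:ip}). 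Hence it is enough to prove that both phase factors have the leading behaviour $\xi^{(\al|\be)}\bigl(1+O(\xi)\bigr)$ near $\xi=0$, so that the quotient extends analytically across $\xi=0$ and therefore throughout $|\xi|<\rho$.

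For $B_{\al,\be}(p;\xi)$ this is already available: from $\d_\la\d_\mu\log B_{\al,\be}(t,\la;\mu-\la)=P_{\al,\be}(t,\la;\mu-\la)=(\al|\be)(\mu-\la)^{-2}+(\text{regular})$, proved in Section \ref{irep} together with \eqref{propag}, we get $\log B_{\al,\be}(p;\xi)=(\al|\be)\log\xi+(\text{analytic in }\xi)$; equivalently one reads this off \eqref{sing-B} through the splitting of the defining integral used there.

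For $\widetilde B_{\al,\be}(p;\xi)$ I would route the path of \eqref{integral-2} so that its final stretch approaches $t_0$ along the complex line ${'t}_0+\C\one$, and parametrize that stretch by $u=u_i(t')$. By \prref{vanishing_a1} one has the exact identities $I^{(0)}_\be(t',0)=\Psi_{t'}\,2e_i/\sqrt{-2u}$ and $I^{(0)}_\be(t'-\xi\one,0)=\Psi_{t'-\xi\one}\,2e_i/\sqrt{-2(u-\xi)}$ along this line (only the $R_0=1$ term contributes to the $(-z)^0$-coefficient). Decompose $\al=\al'+\tfrac12(\al|\be)\be$ with $r_\be(\al')=\al'$. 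The $\al'$-part of the integrand equals $(\text{holomorphic in }(u,\xi))\cdot du/\sqrt{-2u}$; it is holomorphic in $\xi$ at $\xi=0$ because $r_\be$-invariance keeps $I^{(0)}_{\al'}(t'-\xi\one,0)$ inside its disk of convergence $D(u_i('t_0),\rho_i)$ when $|\xi|<\rho$, and its $u$-integral converges at $u=0$, so it contributes an analytic nonvanishing factor. The $\be$-part, using in addition that $\Psi_{t'}$ is an isometry and $\d/\d u_i\bullet_{t'}\d/\d u_i=\d/\d u_i$, reduces to the $A_1$ phase factor $\exp\bigl((\al|\be)\int du/\sqrt{(u-\xi)\,u}\,\bigr)$; its primitive is an explicit branch of $2\log(\sqrt u+\sqrt{u-\xi})$, and evaluating as $u\to0$ produces precisely the factor $\xi^{(\al|\be)}\bigl(1+O(\xi)\bigr)$. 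Thus $\widetilde B_{\al,\be}(p;\xi)=\xi^{(\al|\be)}\bigl(1+O(\xi)\bigr)$ as well, and dividing gives the lemma.

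The main obstacle is the claim that the two singular exponents coincide exactly. The computation of the $\be$-part above involves the product of branches $\sqrt{-2u}\,\sqrt{-2(u-\xi)}=-2\sqrt{u(u-\xi)}$, the orientation of the various pieces of the integration path, the sign $(-1)^l$ built into $(\cdot|\cdot)$, and the relation between $\sum_k(I^{(0)}_\al(0,\cdot),\d^k)$-type coefficients and $(\al|\be)$ coming from \coref{int-res}; one must check that all of these conspire so that $\widetilde B_{\al,\be}(p;\xi)$ carries the \emph{same} power $\xi^{(\al|\be)}$, and the same determination of it, as $B_{\al,\be}(p;\xi)$, rather than $\xi^{-(\al|\be)}$ or $\xi^{2(\al|\be)}$. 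Once this is settled, the case $(\al|\be)=0$ is automatic and reproves \thref{tprop2}(a) in this local form.
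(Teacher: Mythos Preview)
Your overall approach matches the paper's: recognize $f_{\al,\be}$ as the ratio $B_{\al,\be}/\widetilde B_{\al,\be}$ at a common reference point, show each factor behaves like $\xi^{(\al|\be)}(1+\text{analytic})$ near $\xi=0$, and conclude. The treatment of $B_{\al,\be}$ and of the $r_\be$-invariant part $\al'$ in $\widetilde B_{\al,\be}$ is correct.

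The gap is in the $\be$-part of $\widetilde B_{\al,\be}$. The ``exact identity'' $I^{(0)}_\be(t',0)=\Psi_{t'}\,2e_i/\sqrt{-2u}$ is false: extracting the $(-z)^0$ coefficient from the series in \prref{vanishing_a1} picks up contributions from \emph{all} $R_m(t)$, $m\ge 0$, since $R_t(z)=\sum_{m\ge 0}R_m z^m$ multiplies a series containing negative powers of $z$ as well. The paper writes this out (both here and in the proof of \leref{vanishing-period}) as a Laurent expansion in $2(\xi-x)$ whose higher terms carry $R_m(t_0)$, $m\ge 1$. After multiplying the two periods and integrating term by term, these correction terms produce, besides harmless pieces that are analytic in $\xi$ or $O(\sqrt{\ep''})$, an extra contribution of the form $O(\xi)\log(\sqrt{\xi+\ep''}+\sqrt{\ep''})$ before taking the limit. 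A priori this spoils the matching of singular parts. The paper kills it by invoking \leref{vanishing-period}: letting $\ep''$ wind twice around $0$ must change the integral only by an integer multiple of $2\pi\sqrt{-1}$, whereas this extra term would change by something proportional to $\xi$; hence its coefficient vanishes. Equivalently one can argue directly from the symplectic identity $\sum_{k+l=n}(-1)^k\,\leftexp{T}{R}_l R_k=\delta_{n,0}$. Without this step your reduction to the pure $A_1$ integral is not justified, and the ``main obstacle'' you flag at the end is more than a matter of signs and branches. A secondary point: as written, $I^{(0)}_\al(t',\xi)=I^{(0)}_\al(t'-\xi\one,0)$ involves $\Psi_{t'-\xi\one}$ rather than $\Psi_{t'}$, so the isometry argument does not immediately apply; use the translation invariance \eqref{tinv1} to place both periods at the same fixed $t_0$ first, as the paper does.
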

\proof
Let us split the integration path in \eqref{ftxi} into four pieces (see Fig. \ref{fig:ip}): 
\beq\label{4-integrals}
\int_{-\ep'\one}^{-\la_0\one} +\int_{-\la_0\one}^{{t}_0-\la_0\one} +
\int_{t_0-\la_0\one}^{t_0+(\rho_i/2)\one} + 
\int_{t_0+(\rho_i/2)\one}^{t_0-\ep''\one}\quad ,
\eeq
where $\la_0$ can be chosen as large as we wish. The first two
integrals were already analyzed at the end of 
Sect.\ \ref{irep}. Namely, their contribution after passing to the
limit $\ep'\to 0$ is 
\beq\label{sing-0}
\log\, B_{\al,\be}(t_0,\la_0;\xi) = (\al|\be)\log\xi +\cdots\,,
\eeq
where the dots stand for some function that depends analytically on $\xi$. 

Note that in the third integral the path can be deformed
homotopically so that it does not intersect the disks
\eqref{disk-critical}. In this  case,  the distance between a
point $t'$ on the integration path and a point $'t_0-u_j('t_0)\one$
($1\leq j\leq N$) is
more than $\rho$. The singularities of the period
$I^{(0)}(t',\xi)$ are precisely at the points
\ben
t'-\xi\one = {'t}_0 - u_j('t_0)\one\,, \qquad 1\leq j\leq N\,,
\een 
which means that as long as $|\xi|<\rho$ the period will be
analytic in $\xi.$ In other words, the third integral is analytic for
$|\xi|<\rho$.

It remains only to analyze the last integral. Let us make the
substitution
\ben
t' = t_0 + x\one\quad \Rightarrow \quad u_i(t') = x\,;
\een 
then the integral becomes
\ben
\int_{\rho_i/2}^{-\ep''} (I^{(0)}_\al(t_0,\xi-x),
I^{(0)}_\beta(t_0,-x)) dx\,.
\een
Since the integration path and the corresponding $\xi$-strip (for $|\xi|<\rho$)  are
entirely in the disk of convergence, we can compute the above
integral via its Laurent series expansion
\ben
I^{(0)}_\beta(t_0,\xi-x) = \frac{\pm 2}{\sqrt{2(\xi -
   x)}}\Big( e_i + \sum_{m=1}^\infty a_m(t_0) (2(\xi -
x))^m \Big)\,, 
\een
where (see \prref{vanishing_a1})
\beq\label{am}
a_m(t_0) = \frac{\pm 2 \Psi_{t_0}R_m(t_0)e_i}{(2m-1)!!}\,,\qquad m\geq 0\,.
\eeq
Let us decompose $\al=\al'+(\al|\beta)\beta/2$,
where $\al'$ is invariant with respect to the local monodromy. Then
 $I^{(0)}_{\al'}(t',\xi)$ is analytic
in $\xi$, which implies that its contribution to the integral is
analytic, i.e., we may  replace $\al$ by $(\al|\beta)\beta/2$. 

Multiplying out the Laurent series 
and integrating term by term we get 
\ben
(\al|\beta)\, 
\int_{\rho_i/2}^{-\ep''}
\frac{dx}{\sqrt{(\xi-x)(-x)}} + O(\xi) \log (\sqrt{\xi+\ep''}+\sqrt{\ep''})\,,
\een
up to terms that  depend analytically on $\xi$ or have order
$O(\sqrt{\ep''}).$ 
The second term must vanish, because if we let $\ep''$ go twice around
$0$, then according to Lemma \ref{vanishing-period} our integral
should change by a constant, while the second term changes by a
function proportional to $\xi$. Of course, we can check the
vanishing directly (by using \eqref{am}), but then we would have to repeat the proof of
Lemma \ref{vanishing-period}. The first integral is straightforward to compute, namely it is
\ben
\left. -2(\al|\beta)\log (\sqrt{\xi-x} +\sqrt{-x})\right|_{\rho_i/2}^{\ep''}=-2(\al|\beta)\log(\sqrt{\xi+\ep''}+\sqrt{\ep''})+\cdots,
\een
where the dots indicate a function analytic in the disk
$|\xi|<\rho$. Passing to the limit we see that our integral
up to analytic terms is $-(\al|\beta)\log \xi$. This singularity cancels, up
to an integer multiple of $2\pi \sqrt{-1}$, with
the singularity \eqref{sing-0} of the first two integrals
\eqref{4-integrals}. Finally, note that if $\ep'$ or $\ep''$ makes a small
loop around $0$, then the integral gains an integer multiple of $\pm 2\pi\sqrt{-1}$. This
proves that the limit $(\ep',\ep'')\to 0$ exists and it depends
analytically on $\xi$ as claimed.
\qed

\begin{lemma}
The function $f_{\al,\beta}(t_0,\xi)$ is locally constant with respect to
$t_0.$
\end{lemma}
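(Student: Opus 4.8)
The plan is to differentiate the defining integral \eqref{ftxi} with respect to the coordinates of $t_0$ and show that the derivative vanishes. Recall that $f_{\al,\beta}(t_0,\xi)$ is obtained as a limit of $\exp$ of the line integral $\int_{-\ep'\one}^{t_0-\ep''\one} I^{(0)}_\al(t',\xi)\bullet_{t'} I^{(0)}_\be(t',0)$, and that by \eqref{phase-form} the $1$-form $I^{(0)}_\al(t',\xi)\bullet_{t'} I^{(0)}_\be(t',0)$ is exactly $d^B$ of the function $\Omega(\f_\al(t',\xi,z)_+,\f_\be(t',0,z)_-)$ (up to the translation-invariance identity \eqref{tinv1}, which lets us write everything in terms of $t'$). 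Hence the line integral over a path is, in principle, a difference of endpoint values of that primitive; the only subtlety is the multivaluedness of the periods near the discriminant, which is precisely why we pass to the regularized limit in $\ep',\ep''$.

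The key steps, in order, are as follows. First I would fix $t_0$ generic on the discriminant, with $\beta$ the cycle vanishing over it, and work inside the fixed disk $|\xi|<\rho$ where \leref{anal} guarantees $f_{\al,\beta}(t_0,\xi)$ is analytic. Second, I would vary $t_0$ along the discriminant (equivalently, vary $'t_0 \in B_{N-1}$ generically so that $t_0 = {'t_0}-u_i('t_0)\one$ stays a generic discriminant point with the \emph{same} vanishing cycle $\beta$); this is a well-defined deformation keeping $\beta$ locally constant. Third, I would split the path as in \eqref{4-integrals} into the four pieces used in the proof of \leref{anal}. The first two pieces reduce to $\log B^\infty_{\al,\beta}(t_0,\la_0;\xi)$, which by \eqref{phase:prim} depends on $t_0$ only through the monodromy eigen-decomposition and is (for fixed cycles $\al,\beta$) independent of the continuous parameter $t_0$ along the discriminant stratum; so the derivative of this contribution in the direction of varying $'t_0$ vanishes. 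Fourth, for the third and fourth integrals — the pieces that run into the disk of convergence around $u_i('t_0)$ and along the primitive direction toward $t_0$ — I would use that the integrand is, inside that disk, a genuine (single-valued) analytic $1$-form equal to $d^B$ of the primitive $\Omega(\f_\al(t',\xi,z)_+,\f_\be(t',0,z)_-)$, so the integral depends only on its endpoints. The endpoints are the reference point $'t_0 + x_i\one$ (whose contribution, by \prref{vanishing_a1} and the isometry property of $\Psi_{t_0}$, varies analytically but contributes only to the overall normalization) and $t_0$ itself, where the primitive is pinned down by the vanishing behavior of $\f_\be(t',0,z)_-$. Differentiating the endpoint values in $t_0$ and using the differential equations \eqref{periods:de1}, \eqref{periods:de2} (which govern exactly how the periods and hence the primitive vary in $t'$), the $t_0$-derivative of the total integral is expressed as a residue-type quantity which one checks is an integer multiple of $2\pi\sqrt{-1}$ — so after exponentiating, $f_{\al,\beta}$ is unchanged.

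The main obstacle I anticipate is bookkeeping the regularization limits $\ep',\ep''\to 0$ while differentiating in $t_0$: one must make sure the two limiting procedures commute, i.e., that $\partial_{t_0}$ of the $\ep$-regularized integral converges uniformly as $\ep',\ep'' \to 0$, so that $\partial_{t_0} f_{\al,\beta}$ genuinely equals the limit of $\partial_{t_0}$ of the regularized expression. This is handled by the same asymptotic analysis as in \leref{anal}: near the endpoints the integrand behaves like $(\al|\beta)/\sqrt{(\xi-x)(-x)}$ times an analytic factor (by \prref{vanishing_a1} and \eqref{am}), and its $t_0$-derivative has the same integrable singularity structure, so the limit-and-derivative interchange is legitimate. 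The remaining point — that the resulting $t_0$-derivative lands in $2\pi\sqrt{-1}\,\Z$ rather than merely being bounded — follows exactly as in \leref{vanishing-period} and \leref{path-independence}: varying $t_0$ within the generic stratum of the discriminant can be realized by a loop in $B'$ fixing both $\al$ and $\beta$ (since $\beta$ is kept as the vanishing cycle and $\al$ is fixed throughout), so the change in the line integral is a sum of contributions from loops going twice around generic discriminant points, each an integer multiple of $2\pi\sqrt{-1}$. Hence $f_{\al,\beta}(t_0,\xi)$ is locally constant in $t_0$.
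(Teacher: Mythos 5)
Your plan correctly identifies the relevant ingredients --- the splitting \eqref{4-integrals}, the exactness relation \eqref{phase-form}, and Lemmas \ref{vanishing-period} and \ref{path-independence} --- but the concluding step is a non sequitur. You argue that the $t_0$-\emph{derivative} of $\log f_{\al,\be}$ is an integer multiple of $2\pi\sqrt{-1}$, ``so after exponentiating, $f_{\al,\be}$ is unchanged.'' If $\d_s\log f_{\al,\be}=2\pi\sqrt{-1}\,n$ with $n\neq0$ along a deformation parameter $s$, then $f_{\al,\be}=C\,e^{2\pi\sqrt{-1}\,n s}$, which is not constant; moreover, Lemmas \ref{vanishing-period} and \ref{path-independence} control integrals over \emph{closed loops}, not derivatives. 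What lies in $2\pi\sqrt{-1}\,\Z$ is the finite change of the logarithm along a loop, and one concludes that it vanishes by continuity. Relatedly, ``varying $t_0$ within the generic stratum \dots can be realized by a loop in $B'$'' is not literally true: the endpoint $t_0-\ep''\one$ moves with $t_0$, and closing up the two integration paths by a connector at distance $\ep''$ from the discriminant contributes a term of order $|t_0-\tilde t_0|/\sqrt{\ep''}$, which does not vanish as $\ep''\to0$. The paper sidesteps both difficulties: it observes that $(u_1,\dots,\widehat{u_i},\dots,u_N)$ are coordinates on the discriminant near $t_0$, that the $t_0$-dependent part of the integral lives on the punctured line $\{t_0\}\times\C\setminus\{u_j(t_0)\}_{j\neq i}$, and that varying $t_0$ amounts, up to homotopy, to keeping the integration path fixed while moving the punctures; Lemma \ref{path-independence} then applies directly to show the integral is insensitive to this motion.

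A second concrete error: you justify the $t_0$-independence of the first two pieces of \eqref{4-integrals} by \eqref{phase:prim}, but that formula computes the phase factor only at $t=0$. The contribution of these pieces is $\log B_{\al,\be}(t_0,\la_0;\xi)$, which by Theorem \ref{tprop1} is a genuinely $t_0$-dependent multivalued analytic function; its $t_0$-dependence must be absorbed into the homotopy argument (it enters through the value of the primitive $\Omega(\f_\al(t_0,\la_0+\xi,z)_+,\f_\be(t_0,\la_0,z)_-)$ at the junction point $t_0-\la_0\one$ shared with the third integral), not declared absent. Once the moving-punctures formulation is adopted, the endpoint-differentiation machinery you propose for the last two pieces via \eqref{periods:de1}--\eqref{periods:de2} becomes unnecessary.
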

\proof
Let us take a small neighborhood $U$ of $t_0$ in $B$; then $u_j=u_j(t)$,
$1\leq j\leq N$ are local coordinates in $U$ and the local equation of the
discriminant in these coordinates is given by $u_i=0.$ In other words,
the critical values $(u_1,\dots,\hat{u_i},\dots,u_N)$ are local
coordinates on the discriminant near $t_0$. In the integral
representation \eqref{4-integrals} only the last two integrals depend on
$t_0$, so let us look at them more carefully. We have an improper integral on the complex plane with $N-1$
punctures:
\ben
\C\backslash\{u_1(t_0),\dots,\widehat{u_i(t_0)},\dots,
u_N(t_0)\},\quad \C\equiv \{t_0\}\times \C.
\een
 For homotopy reasons, we may think that varying $t_0$ along the
discriminant is equivalent to still integrating along the same path but 
changing the positions of the punctures $u_j(t_0)$, $j\neq
i$. However,  thanks to Lemma \ref{path-independence} the integral
does not depend on the position of the punctures.  
\qed

\subsection{Proof of Theorem \ref{t-ope}}
In order to prove that $f_{\al,\beta}$ is a constant, let us see how
the integral changes under rescaling $\xi\mapsto c\xi$, where
$c$ is a positive constant sufficiently close to $1$. 
Recall that the flat coordinates were assigned degrees $1-d_i={\rm
  deg}(\tau_i)$, where $d_i$ is the weighted-homogeneous degree of the
corresponding polynomial $\d_if$ (see Sect.\ \ref{s2}). It
follows that the structure constants $C_{ij}^k(\tau)$ of the Frobenius
multiplication, defined by
\ben
\d_i\bullet_\tau \d_j =\sum_{k=1}^N C_{ij}^k(\tau)\d_k
\een    
are homogeneous of degree $d_i+d_j-d_k$. 

Let us look at the integrand of $f_{\al,\beta}(\xi)$,
\ben
I^{(0)}_\beta(t,\xi)\bullet_t I^{(0)}_\beta(t,0) =\sum_{i,j,k=1}^N
(I^{(0)}_\beta(t,\xi),\d_k) C_{ij}^k(\tau) (I^{(0)}_\beta(t,0),d\tau_j) d\tau_i\,.
\een
Since the periods satisfy the homogeneity equation (cf.\ \eqref{periods:de1}, \eqref{periods:de2}):
\ben
(\la\d_\la +E) I^{(0)}_\beta(t,\la) = (\theta -1/2)I^{(0)}_\beta(t,\la)
\een
and 
\ben
\theta(\d_k) = (d/2-d_k)\d_k\,,\quad \theta(d\tau_j)  =(d_j-d/2)d\tau_j\,,
\quad \leftexp{T}{\theta}=-\theta\,,
\een
we see that $(I^{(0)}_\beta(t,\xi),\d_k)$ and
$(I^{(0)}_\beta(t,0),d\tau_j)$ are homogeneous of degrees 
$d_k-(d+1)/2$ and $-d_j+(d-1)/2$, respectively. It follows that the degree of the
integrand is 
\ben
d_k-(d+1)/2 + d_i+d_j-d_k -d_j+(d-1)/2+(1-d_i) = 0\,.
\een
This means that our function $f_{\al,\beta}(\xi)$ is homogeneous of
degree $0$, i.e., $f_{\al,\beta}(c\xi)=f_{\al,\beta}(\xi)$ for all
$c$ sufficiently close to $1$. Therefore, only the constant
term in the Taylor series expansion of $f_{\al,\beta}(\xi)$ at
$\xi=0$  could be non-zero.
This completes the proof of \thref{t-ope}.

\section{$\W$-Constraints }\label{s5}
In this section we prove our main result, Theorem \ref{t1}. 
The proof consists of several simple steps.

\comment{
\subsection{Dual space???}

The dual space 
\ben
\F_\hbar^* := \Hom_{\C_\hbar}(\F_\hbar,\C_\hbar) \cong \C_\hbar\otimes_{\C} \Hom_{\C}(\F_\si,\C) \cong \C_\hbar[[\q]]
\een
can be identified with the space of formal power series $\C_\hbar[[q_0-t,q_1+\one,q_2,q_3,\dots]]$ (for any $\tau\in\C^N$).
We thus identify $\F_\hbar^*$ with the space of formal functions on $H[z]$ defined in the formal neighborhood
of $\tau-\one z$, which contains the total descendant potential $\D_{X_N}$ (see \deref{dtdp}).

On the other hand, as the dual space of a $\si$-twisted  representation, $\F_\hbar^*$ can be viewed as the completion of a
$\si^{-1}$-twisted  representation (see \cite{FHL,X}). In particular, the Borcherds identity
\eqref{vert5} and the product identity \eqref{locpr3} hold, if we interpret them in terms of matrix coefficients.
Recall from \cite{FHL} that for $\al\in\lieh$
\ben
\langle Y_t^*(\al,\la)\phi, v \rangle = \la^{-2} \langle \phi, Y_t^\infty(\al,\la^{-1})v \rangle \,, \qquad
\phi\in \F_\hbar^* \,, \; v\in\F_\hbar \,,
\een
where $Y_t^*$ denotes the representation of $\F$ on $\F_\hbar^*$
(in other words, $\al_{(n)}^* = \leftexp{T}\al_{(-n)}$).
More generally, if $a\in\F$ is \emph{quasiprimary} of degree $\Delta$, i.e.,
\ben
L_0 a = \Delta a \,, \qquad L_1 a = 0 \,,
\een
then
\ben
\langle Y_t^*(a,\la)\phi, v \rangle = \la^{-2\Delta} \langle \phi, Y_t^\infty(a,\la^{-1})v \rangle \,, \qquad
\phi\in \F_\hbar^* \,, \; v\in\F_\hbar \,,
\een
For example, $\om$ is quasiprimary of degree $2$, and $L_n^*=\leftexp{T}L_{-n}$.
Due to \cite{FF2, FKRW}, the vertex algebra $\W_{X_N}$ can be generated by $N$ quasiprimary elements of degrees $m_1+1,\dots,m_N+1$.
}

\subsection{Reduction to analyticity at the critical values}

Recall that for each $t\in B$, the operator series
$Y_t^\infty(a,\la)$ defined in Sect. \ref{sperep} provide a
$\si$-twisted  representation of the vertex algebra  $\F$ 
on the twisted Fock space $\F_\hbar$. Let us complete $\F_\hbar$ with respect to the formal
topology near $\q=-\one z$, i.e., define
\ben
\overline{\F}_\hbar
=\C_\hbar[[q_0,q_1+\one,q_2,\dots]] \,.
\een
Then the action of $Y_t^\infty(a,\la)$ on elements of $\overline{\F}_\hbar$ still makes
sense, since the operator series are given by the Wick formula
\eqref{Wick} and the periods $I^{(n)}_\al(t,\la)\to 0$ in the formal
$\la^{-1}$-adic topology as $n\to\infty.$

By definition, the total descendant potential is an element of yet
another completion of the twisted Fock space, namely 
\ben
 \D_{X_N} \in \C_\hbar[[q_0-\tau,q_1+\one,q_2,\dots]] = \widehat{\S}_t^{-1} \, \overline{\F}_\hbar
\een 
(see \eqref{dxna}, \eqref{ancestor} and \leref{lstiso}).
The latter is equipped with the structure of a $\si$-twisted $\F$-module via the operator series
\ben
Y(a,\la):=Y_0^\infty(a,\la)\,,\qquad a\in \F \,. 
\een 
If $a\in \W_{X_N}$, then due to the $\si$-invariance, $Y(a,\la)$ has only integral powers of $\la$.  
\thref{t1} is equivalent to the statement that
$Y(a,\la)\D_{X_N}$ has no negative powers of $\la$ for all $a\in\W_{X_N}$.

Let us assume that $a\in \W_{X_N}$, $t\in B$ is generic and $|\la|$ is sufficiently large, so
that the Laurent series expansions are convergent. By \leref{S}, we have
\ben
Y(a,\la)\, \widehat{\S}^{-1}_t = \widehat{\S}^{-1}_t \,
Y^\infty_t(a,\la) \,,
\een
which together with \eqref{dxna} gives
\ben
Y(a,\la)\D_{X_N} = e^{F^{(1)}(t)} \widehat{\mathcal{S}}_t^{-1} Y_t^\infty(a,\la) \A_t \,,
\een
where $\A_t$ is the ancestor potential \eqref{ancestor}.
Thus, $Y(a,\la)\D_{X_N}$ has no negative powers of $\la$ 
if and only $Y_t^\infty(a,\la) \A_t$ has the same property.

Since $\A_t$ is tame, the coefficients of $X_t(a,\la)\A_t$ 
 are polynomial expressions in the coefficients of
$X_t(a,\la)$, i.e., $Y_t^\infty(a,\la)\A_t$ is the Laurent series expansion
of $X_t(a,\la)\A_t$ at $\la=\infty.$ We want to show that all coefficients of the series
$X_t(a,\la)\A_t$ are polynomials in
$\la$. Since $X_t(a,\la)\A_t$ is monodromy
invariant and has singularities only at the critical values, 
this is equivalent to the condition that $X_t(a,\la)\A_t$ does not have poles at the critical
values $\la=u_i(t)$ for $1\leq i\leq N.$
 
\subsection{Reduction to the case of Virasoro constraints of an $A_1$-singularity}

For $a\in \W_{X_N}$, we
analytically continue $X_t(a,\la)$ to a neighborhood of
$\la=u_i(t)$. The vanishing cycle over the point $(t,u_i(t)) \in\Si$ will be denoted by
$\beta$. 

Recall that $\F_\beta\subset \F$ is the subalgebra generated by $\be$, and $\F_\be^\perp$
is the subalgebra generated by all $\al\in\lieh$ such that $(\al|\be)=0$ (see \eqref{fbeta}, \eqref{fbetaperp}).
\comment{
Put $\F_\beta\subset \F$ for the vector subspace spanned by all
polynomial expressions in $T^n\beta, n\in \Z$, where $T=-\d_t$ is the translation
operator in $\F$. Similarly let $\F_\beta^\perp\subset\F$ be the vector
subspace spanned by all polynomial expressions in $T^n\al$, where
$n\in \Z$ and  $(\al|\beta)=0.$ If we choose a basis
$\al=(\al_1,\dots,\al_{N-1})$ of the orthogonal complement $(\C\beta)^\perp$
then we have the following isomorphisms:
\ben
\F_\beta^\perp \cong \C[\al,T\al,\dots ],\quad
\F_\beta\cong \C[\beta,T\beta,\dots]. 
\een
Note that the $(-1)$-product in $\F$
induces the following isomorphism:
\ben
\F_\beta^\perp\otimes\F_\beta\cong \F,\quad v\otimes w\mapsto v_{(-1)}w.
\een
Let us decompose the vector $a$ according to the above isomorphism
}
Then, according to \eqref{fbetaiso}, for every $a\in\F$ we can write
\ben
a=\sum_k a^k_{(-1)} b^k \,, \qquad 
a^k \in \F_\be^\perp \,, \; b^k \in \F_\be \,.
\een

\begin{lemma}\label{lfbeperp}
Assume that\/ $a\in\F$ is written as above with linearly independent\/ $a^k$.
Then\/ ${e^\beta}_{(0)}a=0$ if and only if all\/ $b^k$ lie in the Virasoro vertex algebra
generated by\/ $\om_\be :=\be_{(-1)}\be/4$.
\end{lemma}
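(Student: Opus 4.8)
The plan is to reduce the statement to the rank-one case by means of the tensor decomposition $\F\cong\F_\be^\perp\otimes\F_\be$ of \eqref{fbetaiso}, using that the screening operator ${e^\be}_{(0)}$ only involves the $\be$-direction. First I would check that $\F_\be^\perp\subseteq\ker({e^\be}_{(0)})$. This is because the modes $\al_{(m)}$ with $\al\in(\C\be)^\perp$ commute with $Y(e^\be,\ze)$, as one reads off \eqref{lat4}--\eqref{lat5}, hence with ${e^\be}_{(0)}$; since $\F_\be^\perp$ is generated from $\vac$ by the creation operators $\al_{(-n)}$ with $\al\perp\be$, $n\geq1$, and ${e^\be}_{(0)}\vac=0$, the claim follows.

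Since the zero mode ${e^\be}_{(0)}$ is a derivation of all $n$-th products (the $m=0$ case of the Borcherds identity \eqref{vert5}), Step~1 gives
\beq
{e^\be}_{(0)} a = \sum_k a^k_{(-1)}\bigl({e^\be}_{(0)}b^k\bigr) \,,
\eeq
where each ${e^\be}_{(0)}b^k$ lies in the subspace $\F\,e^\be$ of $V_Q$. Now the field of any $u\in\F_\be^\perp$ is a normally ordered polynomial in the fields $Y(\al,\ze)$, $\al\perp\be$, and their derivatives, all of whose modes commute with the operator $e^\be$; so $u_{(-1)}(v\,e^\be)=(u_{(-1)}v)\,e^\be$ for $v\in\F_\be$, and \eqref{fbetaiso} shows that $u\otimes v\mapsto u_{(-1)}(v\,e^\be)$ is a linear isomorphism $\F_\be^\perp\otimes\F_\be\to\F\,e^\be$. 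As the $a^k$ are linearly independent, it follows that ${e^\be}_{(0)}a=0$ if and only if ${e^\be}_{(0)}b^k=0$ for every $k$.

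It remains to identify $\ker\bigl({e^\be}_{(0)}\colon\F_\be\to V_Q\bigr)$ with the Virasoro vertex algebra generated by $\om_\be=\tfrac14\be_{(-1)}\be$. This is the $A_1$ case of the free-field realization of $\W$-algebras: by \cite{FF2, FKRW} (cf.\ Remark~\ref{rwn}) this kernel equals $\W_{A_1}=\W_2$, the Virasoro vertex algebra of central charge $1$ whose Virasoro element is $\om_\be$ (note $(\be|\be)=2$, so $\be/2$ is dual to $\be$ and $\om_\be=\tfrac12(\be/2)_{(-1)}\be$). The converse is immediate: if every $b^k$ lies in $\langle\om_\be\rangle\subseteq\W_2\subseteq\ker({e^\be}_{(0)})$, then ${e^\be}_{(0)}a=\sum_k a^k_{(-1)}\cdot0=0$.

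Everything except the last step is bookkeeping with the tensor decomposition, so that is where the real content sits, and it rests on an external structural result. One point to handle carefully: $\W_{A_1}$ is defined in Sect.~\ref{sec_2.5} using both screenings ${e^{\pm\be}}_{(0)}$, whereas the argument above only produces $b^k\in\ker({e^\be}_{(0)})$; the two kernels agree on $\F_\be$ because a charge-zero vector killed by ${e^\be}_{(0)}$ is a highest-weight vector of weight $0$ for the finite-dimensional $\liesl_2\subset(V_{\Z\be})_1$ spanned by $\be_{-1}\vac$ and $e^{\pm\be}$, hence spans a trivial subrepresentation and is also killed by ${e^{-\be}}_{(0)}$.
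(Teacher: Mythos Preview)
Your argument is correct and follows the same approach as the paper's proof: use that ${e^\be}_{(0)}$ is a derivation and kills $\F_\be^\perp$, reduce to ${e^\be}_{(0)}b^k=0$, and identify the resulting kernel with $\W_{A_1}=\W_2$. The paper's proof is three lines and simply asserts the ``if and only if'' and the identification with the Virasoro algebra; you supply the missing justification on two points the paper glosses over --- namely, why the linear independence of the $a^k$ survives after applying ${e^\be}_{(0)}$ (your isomorphism $\F_\be^\perp\otimes\F_\be\to\F\,e^\be$), and why a single screening $\ker({e^\be}_{(0)})\cap\F_\be$ already gives $\W_{A_1}$ rather than needing both $e^{\pm\be}_{(0)}$ (your $\liesl_2$ highest-weight argument).
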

\begin{proof}
Since ${e^\beta}_{(0)}$ is a derivation of the $(-1)$-st product and ${e^\beta}_{(0)}a^k = 0$, we see that
${e^\beta}_{(0)}a=0$ if and only if ${e^\beta}_{(0)}b^k=0$ for all $k$. This means that $b^k$
lie in the $\W_{A_1}$-algebra corresponding to the lattice $\Z\be$, which is just the Virasoro vertex algebra
(cf.\ \reref{rwn}).
\end{proof}


By \thref{txtla2} (a) and (c), the operators $X_t(a^k,\la)$ are regular at $\la=u_i$ and 
\ben
Y^{u_i}_t(a,\la) \A_t = \sum_k Y^{u_i}_t(a^k,\la) \, Y^{u_i}_t(b^k,\la) \A_t \,,
\een
where $Y^{u_i}_t(a,\la)$ denotes the Laurent series expansion at
$\la=u_i$ of $X_t(a,\la)$.
If we prove that $Y^{u_i}_t(b^k,\la)\A_t$ does not have a pole at
$\la=u_i$, then the above expression does not have a pole as well. 

Due to \leref{R}, 
\ben
Y^{u_i}_t(b,\la) \, \widehat{\Psi}_t\widehat{R}_t = \widehat{\Psi}_t\widehat{R}_t \,
Y^{A_1}_{u_i}(b,\la) \,, \qquad b\in \F_\be\,,
\een
where the operator series $Y^{A_1}_{u_i}(b,\la)$ provide an 
$r_\be$-twisted representation of the vertex algebra $\F_\be$ on $\F_\hbar^{\rm tame}$
(see the discussion before \leref{R}).
Then \eqref{ancestor} implies that for $b\in \F_\beta$,
\ben
Y^{u_i}_t(b,\la) \A_t(\q) = \widehat{\mathcal{R}}_t\, Y^{A_1}_{u_i}(b,\la) 
\prod_{j=1}^N \D_{\rm pt}(\hbar\Delta_j;\leftexp{j}{\q}) \,,
\een
where $Y^{A_1}_{u_i}(b,\la)$ is acting on the $i$-th factor in the product.

When $b$ is in the Virasoro vertex algebra generated by $\om_\be$,
the operators $Y^{A_1}_{u_i}(b,\la)$ give an untwisted representation.
By \coref{clocpr}, the regularity of $Y^{A_1}_{u_i}(b,\la)$ at $\la=u_i$
follows from the regularity of the generating field $Y^{A_1}_{u_i}(\om_\be,\la)$.
Therefore, we only need to verify that the Virasoro
constraints for an $A_1$-singularity coincide with the usual Virasoro
constraints for the Witten--Kontsevich tau-function $\D_{\rm pt}$.

\subsection{Virasoro constraints for an $A_1$-singularity}

Let us assume now that $F(t,x)=x^2/2+t$ is the miniversal deformation
of an $A_1$-singularity. Then the period has the form
\ben
I_\beta^{(0)}(t,\la) = \frac{2}{\sqrt{2(\la-t)}} \,,
\een
where the vanishing cycle over the point $(t,\la)\in B\times \C$ is
the $0$-dimensional cycle 
\ben
\beta = [x_+(t,\la)]-[x_-(t,\la)]\,,\qquad x_\pm(t,\la) = \pm \sqrt{2(\la-t)}\,.
\een
{}From here we find for $k\geq0$
\begin{align*}
(-1)^{k+1} I_\beta^{(k+1)}(t,\la) &= (-\d_\la)^{k+1} I_\beta^{(0)}(t,\la) =
2^{-k-\frac12} (2k+1)!! \, (\la-t)^{-k-\frac32} \,,
\\
 I_\beta^{(-k)}(t,\la) &= (\d_\la)^{-k} I_\beta^{(0)}(t,\la) =
\frac{ 2^{k+\frac12} }{ (2k-1)!! } \, (\la-t)^{k-\frac12} \,,
\end{align*}
where $(-1)!! :=1$.
After quantization (see \eqref{quant1}, \eqref{quant2}), we obtain the differential operator
\ben
X_t(\beta,\la) = \d_\la \widehat\f_\beta(t,\la) =
\sum_{n:{\rm odd}} J_n (\la-t)^{-\frac{n}{2}-1} \,,
\een
where for $k=0,1,2,\dots$
\ben
J_{2k+1} =   2^{-k-\frac12} (2k+1)!! \,\hbar^{1/2} \frac{\d}{\d q_k} \,,\qquad
J_{-2k-1}  =  \frac{2^{k+\frac12} }{(2k-1)!!}  \, \hbar^{-1/2} q_k\,.
\een
%
%
The formula for the propagator \eqref{propser}, \eqref{prop-seru} assumes the form
\begin{align*}
P_{\beta,\beta}(t,\la;\mu-\la) &=
\sum_{k=0}^\infty (2k+1) (\mu-t)^{-k-\frac32}  (\la-t)^{k-\frac12}
\\
&= -2\iota_{\mu-t,\la-t} \d_\mu \bigl( (\mu-\la)^{-1} (\mu-t)^{1/2} (\la-t)^{-1/2} \bigr) \,. 
\end{align*}
On the other hand, we have
\ben
X_t(\om_\be,\la) = \frac14 X_t(\be_{(-1)}\be,\la) =
\frac14 {:} X_t(\beta,\la)X_t(\beta,\la) {:} + \frac14 P^0_{\beta,\beta}(t,\la)\,.
\een
After a short computation, we find
$P^0_{\beta,\beta}(t,\la) = (\la-t)^{-2}/4$, which implies
\ben
X_t(\om_\be,\la) 
= \sum_{m\in \Z} L_m (\la-t)^{-m-2} \,,
\een
where the Virasoro operators are
\ben
L_{m} = \frac{1}{16} \delta_{m,0} + \frac14 \sum_{k\in \Z} {:} J_{2(k+m)+1} J_{-2k-1} {:} \,. 
\een

For example, the first few operators are as follows:
\begin{align*}
L_{-1} & =  \frac1{2\hbar} {q_0}^2 + \sum_{k=0}^\infty q_{k+1} \frac{\d}{\d q_k} \,,\\
L_0 & =  \frac1{16}+ \frac12 \sum_{k=0}^\infty (2k+1) q_k \frac{\d}{\d q_k} \,,\\
L_1 & =  \frac{\hbar}8 \frac{\d^2}{{\d q_0}^2} + \frac14 \sum_{k=0}^\infty
(2k+3)(2k+1)q_k\frac{\d}{\d q_{k+1}} \,,\\
L_2 & =  \frac{3\hbar}8 \frac{\d^2}{\d q_0\d q_1} + \frac18 \sum_{k=0}^\infty 
(2k+5)(2k+3)(2k+1)q_k\frac{\d}{\d q_{k+2}} \,.
\end{align*}
After setting $\hbar=1$, these become precisely the Virasoro operators that characterize the
Witten--Kontsevich tau-function (see \cite{W1b}). 

Finally, let us point out that the total descendant potential is
obtained from a product of formal power series that are obtained
from the Witten--Kontsevich tau-function by rescaling:
\ben
\D_{\rm pt}(\hbar,\q)\mapsto \D_{\rm pt}(\hbar \Delta_i,\q\sqrt{\Delta_i}) \,.
\een
Since the above Virasoro operators are invariant under such a rescaling, the rescaled potentials
still satisfy the same Virasoro constraints. 

This completes the proof of Theorem \ref{t1}.

\section*{Acknowledgements}
The authors benefited from attending the AIM workshop
``Integrable systems in Gromov--Witten and symplectic field theory"
(January 30 -- February 3, 2012)
and, in particular, from discussions with B.\ Dubrovin and S.\ Shadrin.
The second author benefited from conversations with  E.\ Frenkel, A.\
Givental, and K.\ Saito. The first author is supported in part by NSF
and NSA grants  
and the second author is supported in part by a JSPS Grant-In-Aid.

\bibliographystyle{amsalpha}

\end{document}